\newtheoremstyle{plainNoItalics}{}{}{\normalfont}{}{\bfseries}{.}{ }{}
\theoremstyle{plain}
\newtheorem{thm}{Theorem}[section]
\theoremstyle{plainNoItalics}
\newtheorem{lem}[thm]{Lemma}
\newtheorem{defn}[thm]{Definition}
\newtheorem{rem}[thm]{Remark}
\newtheorem{prop}[thm]{Proposition}
\newcommand{\beq}{\begin{equation}}
\newcommand{\eeq}{\end{equation}}
\newcommand{\beqa}{\begin{eqnarray}}
\newcommand{\eeqa}{\end{eqnarray}}
\newcommand{\bit}{\begin{itemize}}
\newcommand{\eit}{\end{itemize}}
\newcommand{\bedef}{\begin{defn}}
\newcommand{\edefn}{\end{defn}}
\newcommand{\bpro}{\begin{prop}}
\newcommand{\epro}{\end{prop}}
\newcommand{\eps}{\varepsilon}
\def\Box{\mbox{ }\rule[0pt]{1.5ex}{1.5ex}}
\begin{document}



\begin{center}
{\bf
Error Estimates of the Integral Deferred Correction Method for Stiff Problems
}
\end{center}
\vspace{.2in}
\centerline{  
Sebastiano Boscarino \footnote{Department of Mathematics and Computer Science, University of Catania, Catania, 95125, E-mail: boscarino@dmi.unict.it}, 
Jing-Mei
Qiu\footnote{Department of Mathematics, University of Houston,
Houston, 77004. E-mail: jingqiu@math.uh.edu. Research supported by
Air Force Office of Scientific Computing YIP grant FA9550-12-0318, NSF grant DMS-0914852 and DMS-1217008 and University of Houston.} 
}

\bigskip
\noindent
{\bf Abstract.}
In this paper, we present error estimates of the integral deferred correction method constructed with stiffly accurate implicit Runge-Kutta methods {with a nonsingular matrix $A$ in its Butcher table representation,} when applied to stiff problems characterized by a small positive parameter $\varepsilon$.
In our error estimates, we expand the global error in powers of $\varepsilon$ and show that the coefficients are global errors of the integral deferred correction method applied to a sequence of differential algebraic systems. 
A study of these errors and of the remainder of the expansion yields sharp error bounds for the stiff problem. 
Numerical results for the van der Pol equation are presented {to} illustrate our theoretical findings. 
 Finally, we study the linear stability properties of these methods.

\bigskip
\noindent {\bf Keywords:} Stiff problems, Runge-Kutta methods, Integral deferred correction methods, Differential algebraic systems.


\section{Introduction}
\label{sec1}
\setcounter{equation}{0}
\setcounter{figure}{0}
\setcounter{table}{0}

{The deferred correction (DC) method for solving an initial value problem in the form of
\beq\label{ODE}
y'(t) = f(t,y(t)), \ \ y(t_0) = y_0 \in \mathbb{R}^N,
\eeq
has been investigated intensively \cite{CDC, SRD, AHKW}. 
An advantage of the DC method is that one can use a simple numerical method, for instance a first order method, to compute the solution with higher order accuracy. This is accomplished by using a lower order numerical method to solve a series of correction equations during each time step. In each iteration, the order of the method increases.
In \cite{dutt2000spectral}, a new variant of the deferred correction method called the spectral deferred correction (SDC) was proposed. In SDC, a deferred correction procedure is applied to an integral formulation of the error equation in the DC method. It has been shown that the SDC method outperforms DC in many problems with promising numerical results \cite{dutt2000spectral}.} {This is mainly due to the integral formulation of the error equation, as numerical integration is considered to be a more stable and accurate process than numerical  differentiation. Moreover, the selection of quadrature nodes plays some role in the performance of the SDC method \cite{layton2005implications}}. In \cite{dutt2000spectral}, the quadrature nodes in the proposed SDC method are chosen to be Gauss-Lobatto, Gauss-Radau or Gauss-Legendre points for high order of accuracy. When the quadrature nodes are uniform, the SDC method is called the integral deferred correction (InDC) method. There are various SDC/InDC methods with different implementation strategies, e.g. in selecting time integrators in prediction and correction steps \cite{minion2003semi, layton2007implications, layton2008choice, huang2007arbitrary, christlieb2009integral, christlieb2009comments, idcark} and in coupling with the Krylov subspace method \cite{huang2007arbitrary}. 
Within the InDC framework, it is shown in \cite{christlieb2009integral, christlieb2009comments} that if an $r$\textsuperscript{th} order integrator is used to solve the error equation, then the accuracy of the scheme increases by $r$ orders after each correction loop. This analysis has recently been extended in \cite{idcark} for the InDC method constructed with implicit and semi-implicit integrators. 
In \cite{christlieb2009comments}, the InDC method constructed with high order Runge-Kutta (RK) methods has been reformulated as a RK method, whose Butcher tableau has been explicitly constructed.

The main goal of this paper is to study the convergence behavior of the InDC method constructed using implicit RK methods of different orders, when applied to a special class of stiff problems 
called \emph{singular perturbation problems} (SPPs). A typical SPP has the form
\beq
\label{spp}
\begin{array}{l}
y'(t) = f(y(t),z(t)),\\ 
\varepsilon z'(t) = g(y(t),z(t)),
\end{array} 
\eeq
where $y$ and $z$ are vectors in {$\mathbb{R}^{N}$} with {$N$} being the dimension of the vectors and  $\varepsilon > 0$ is the \emph{stiffness} parameter. We call these vectors the differential component for $y$ and {the} algebraic one for $z$. Classical books on this subject are \cite{Tikhonov, Malley}. 
In system (\ref{spp}) we assume that
$0 < \varepsilon \ll 1$ and $f$ and $g$ are sufficiently differentiable vector-valued functions. 
The functions  $f$, $g$ and the 
initial values $y(0)$, $z(0)$ may depend smoothly on 
$\varepsilon$. For simplicity of notation, we suppress such dependence.
We require that system (\ref{spp}) satisfies 
\beq
\label{eq: gz}
\mu(g_z(y, z)) \le -1,
\eeq
in an $\varepsilon$-independent neighbourhood of the solution, where $\mu$ denotes the logarithmic norm with respect to some inner product. From a classical result in SPPs theory, the condition (\ref{eq: gz}) guarantees the existence of an $\varepsilon$-expansion, whose coefficients are the sum of a smooth function of the independent variable $t$ and an exponentially decaying function of the stretched variable $\tau = t/\varepsilon$ (initial layer). The exponentially decaying function is not present if the initial values of system (\ref{spp}) (which depend on $\eps$) are on the smooth solution,  see Chap.~VI.3 of \cite{hairer1993solving2} for more details. We thus suppose, in our analysis, that the initial values lie on the smooth solution, that $\varepsilon\ll H$ where $H$ is the time step size, and that the initial layer is over.  In fact, arbitrary initial values introduce an initial layer in the solution. One possible way to overcome this difficulty is simply to ensure that the numerical method resolves the initial layer by taking small step size of $\mathcal{O}(\eps)$.  

System (\ref{spp}) allows us to understand many phenomena observed for very stiff problems. 
Indeed, in \cite{hairer1993solving2} and in the original paper \cite{hairer1988error}, the authors showed that most of the RK methods presented in the literature suffer from the phenomenon of order reduction in the stiff regime.
To this aim, we investigate the same phenomenon when it appears in the InDC framework. In the past, such order reduction has been numerically investigated without much theoretical justification \cite{minion2003semi, idcark}.
The novelty of this paper is to provide rigorous and careful convergence analysis for the global error of the InDC method and investigate its stability property. 

In this paper, we study the global error of the InDC method when it is applied to SPPs in the form of (\ref{spp}), in order to seek an understanding on the order reduction phenomenon. First we consider the InDC method constructed with the backward Euler (BE) method, denoted as InDC-BE, and then with implicit RK (IRK) methods, denoted as InDC-IRK.

The main idea is to expand the error in powers of $\varepsilon$, whose coefficients are called error terms, and show convergence results for these error terms. Order reduction phenomenon exists for both differential and algebraic components in the InDC framework. Specifically, under suitable assumptions, the order of convergence for the first term in the $\varepsilon$-expansion of global error increases with high order if a high order RK method is applied in the correction steps of the InDC method;  whereas the order of convergence for the second term in $\varepsilon$-expansion is determined by the stage order of the RK method for the prediction step. We focus our analysis on the InDC method using uniform quadrature nodes, but excluding the left-most endpoint. The uniform distribution of nodes is important to increase accuracy by the corresponding high order, when a high order RK method is applied in correction steps for classical problems; we refer readers to \cite{christlieb2009integral} for details.
 The use of quadrature nodes excluding the left-most endpoint leads to an important stability condition for stiff problems, i.e. the method becomes L-stable if A-stable; we {discuss such stability issues in Section~\ref{sec: stab}}.
{ We also remark that important assumptions on the IRK method are that the method is {stiffly accurate} and has nonsingular matrix $A$ {in its Butcher table representation}.} We will show that, if these properties are not satisfied, the corresponding InDC method becomes unstable and the numerical solution diverges. {A satisfactory explanation of this fact is given in the Appendix}.  


The paper is organized in the following way. In the rest of this section,  
we present the basic notations of IRK methods for SPPs in \cite{hairer1993solving2} (for more details see \cite{hairer1988error}).
In Section~\ref{sec2}, we introduce the InDC-BE method for SPPs \eqref{spp}.
In Section~\ref{sec3}, main theoretical results are stated in the form of two Theorems; numerical evidence supporting these theoretical  results are summarized and presented. In Section~\ref{sec4}, we prove convergence results for the InDC-BE method. In Section~\ref{sec: stab}, we study the linear stability properties of these InDC methods. Conclusions are given in Section~\ref{sec5}.  {We organize the description of InDC-IRK methods, the $\eps$-expansion of the numerical solution, as well as the corresponding error estimates and the estimation of the remainder, into the Appendix for better readability of the paper}. Throughout the paper, for classical concepts and convergence results related to RK methods applied to SPPs, we will cite the classical book on the subject \cite{hairer1993solving2} (with the Chapter numbering) from time to time.
\subsection{
The IRK method applied to SPPs
}
\label{sec2.0}

In order to get more insight in the convergence estimates of InDC methods, 
it is useful to consider the convergence results for the RK methods when applied to (\ref{spp}). 
We observe that when the parameter $\varepsilon$ in system (\ref{spp}) is small, the corresponding differential equation
is stiff, and when $\varepsilon$ tends to zero, the differential equations become a differential
algebraic system. 
The corresponding \emph{reduced} system, i.e. $\varepsilon = 0$, is the differential algebraic equation (DAE) 
\beq\label{reduced}
\begin{array}{l}
y' = f(y,z), \\
0 = g(y,z), 
\end{array}
\eeq
whose initial values are \emph{consistent} if $0 = g(y_0,z_0)$. We assume that the Jacobian 
\beq
\label{redg}
g_z(y, z) \qquad \textrm{is invertible},
\eeq 
in a neighbourhood of the solution of (\ref{reduced}). This
assumption guarantees the solvability of (\ref{reduced}) and that the equation $g(y, z) = 0$ possesses
a locally unique solution $z = \mathcal{G}(y)$ (Implicit Function Theorem), which inserted into (\ref{reduced}) gives
\beq
\label{eqy} 
y' = f(y,\mathcal{G}(y)).
\eeq
From now on we assume a Lipschitz condition for $\mathcal{G}$. Furthermore, under the assumption \eqref{redg}, equation (\ref{reduced}) is said to be a differential-algebraic equation of index 1. For a definition of the index of differential algebraic problems, we refer to \cite{gear1988differential, hairer1993solving2}. 


{Now in order to solve system (\ref{spp}) we apply 
an IRK method. This gives
\beq\label{eq:1-8}
\left(\begin{array}{c}
y_{n+1}\\
 z_{n+1}
\end{array}\right)
 = \left(\begin{array}{c}
y_{n}\\
 z_{n}
\end{array}\right)
 + h \sum_{i=1}^{s} b_i\left(\begin{array}{c}
                           k_{ni}\\
                            \ell_{ni}
                          \end{array}\right) ,
\eeq
where
\beq\label{eq:1-9}
\left(\begin{array}{c}
k_{ni}\\
\eps \ell_{ni}
\end{array}\right)
 = \left(\begin{array}{c}
f(Y_{ni},Z_{ni})\\
g(Y_{ni},Z_{ni})
\end{array}\right), 
\eeq 
and the internal stages are given by
\beq\label{eq:1-10}
\left(\begin{array}{c}
Y_{ni}\\
 Z_{ni}
\end{array}\right)
 = \left(\begin{array}{c}
y_{n}\\
 z_{n}
\end{array}\right)
 + h  \sum_{j=1}^{s} a_{ij}\left(\begin{array}{c}
                          k_{nj}\\
                           \ell_{nj}
                          \end{array}\right).
\eeq
Such method is characterized by the coefficient matrix $A = ({a}_{ij})$ and vectors $c=(c_1,...,c_s)^T$, $b=(b_1,...,b_s)^T$. 
They can be represented by a $tableau$ in the usual Butcher notation,
\beq
\label{eq: B_table}
\begin{array}{c|c}
{c} & {A}\\
\hline
 & {b^T} \end{array}.
\eeq
The coefficients $c$ are given by the basic consistency relation
$ c_i = \sum_{j=1}^{s} a_{ij}$.

We now suppose that the matrix $A$ is invertible and put $\eps = 0$, we obtain by algebraic manipulations from \eqref{eq:1-8}, \eqref{eq:1-9}, \eqref{eq:1-10} that,
\beq\label{eq:1-10bis}
\begin{array}{l}
\displaystyle y_{n+1}= 
y_{n} + h \sum_{i=1}^{s} b_i f(Y_{ni},Z_{ni})\\
\displaystyle  z_{n+1} =
R(\infty)z_{n} + h \sum_{i=1}^{s} b_i w_{ij} Z_{nj},
\end{array}
\eeq
where
\beq\label{eq:1-9bis}
\begin{array}{l}
\displaystyle Y_{ni} = y_n + h  \sum_{j=1}^{s} a_{ij} f(Y_{ni},Z_{ni})\\
\displaystyle 0 = g(Y_{ni},Z_{ni}),
\end{array}
\eeq
with $R(\infty) = 1- \sum_{i,j=1}^s b_i w_{ij}$, where $R(z)$ is the stability function of the method and $w_{ij}$ the elements of the inverse of the matrix $A$.
We note that the numerical solution $z_{n+1}$ is independent of $\eps$ and this represents an interesting approach to solve the reduced system (\ref{reduced}). 
In general the numerical solutions (\ref{eq:1-10bis}) do not lie on the manifold $g(y,z) = 0$. Of special importance here is the following definition which will be an important assumption in the next for the analysis.

\begin{defn}
\label{DefSA} An IRK method is called \emph{stiffly accurate} (SA) if $b^T = e^T_s A$ with $e^T_s = (0,...,0,1)$,
i.e., methods for which the numerical solution is identical to the last internal stage.
\end{defn}

Now we have a couple of remarks in order here.


\begin{rem}
{
By the non-singularity of the matrix $A$ and with Definition \ref{DefSA}, we have $R(\infty) = 0$ for a SA IRK method.
This makes an $A$-stable SA IRK method $L$-stable. Note that a method is called $L$-stable if it is $A$-stable and if its stability function $R(z) \to 0$ when $z \to \infty$. For details, see Chap.~IV.3 in \cite{hairer1993solving2}.
}
\end{rem}

\begin{rem}
By Definition \ref{DefSA}, we get for the numerical solutions $y_{n+1} = Y_{ns}$, and $z_{n+1} = Z_{ns}$, i.e. they are identical to the last internal stage of the method. Furthermore, by the second equation in (\ref{eq:1-9bis}), we have $Z_{ni} = \mathcal{G}(Y_{ni})$ and then $g(y_{n+1}, z_{n+1})=0$, i.e. the numerical solutions lie on the manifold and it follows that the numerical solution $z_{n+1}$ depends on $y_{n+1}$, i.e. $z_{n+1} = \mathcal{G} (y_{n+1})$.
\end{rem}

\begin{rem}
If the method is stiffly accurate, we say that the numerical solutions of the numerical method \eqref{eq:1-10bis}, \eqref{eq:1-9bis}, with $Z_{ni} = \mathcal{G}(Y_{ni})$ and $z_{n+1} = \mathcal{G} (y_{n+1})$ are  identical to the solutions of the Eq.  (\ref{eqy}) with the same Runge-Kutta method,  \cite{hairer1993solving2}.
\end{rem}

Now we review the main convergence results of IRK methods for SPPs, for a detailed review we refer the reader to \cite{hairer1988error, hairer1993solving2}. This result represents the starting point of convergence analysis for InDC methods applied to (\ref{spp}). 

Under the assumptions of Theorem 3.8 in Chap.~IV.3 in \cite{hairer1993solving2}, the global error of an IRK method satisfies the following convergence results
 \begin{eqnarray*}
 y_{n} - y(t_n) = \mathcal{O}(h^p) +\mathcal{O}(\eps h^{q+1}), \quad   z_{n} - z(t_n) = \mathcal{O}(h^{q+1}).
 \end{eqnarray*}
In addition, if the method is stiffly accurate, we have
 \begin{eqnarray*}
 z_{n} - z(t_n) = \mathcal{O}(h^p) +\mathcal{O}(\eps h^{q}),
 \end{eqnarray*}
where $p$ is the {\em classical} order of the method, and $q$ is the {\em stage order} of the method, (i.e. condition $C(q)$ of section IV.5 in \cite{hairer1988error}). 

Our idea here is to use the error analysis of  IRK methods applied to SPPs obtained in Chap.~VI.3 of \cite{hairer1993solving2}, and extend them to the InDC methods. In fact, in order to do that, we perform an asymptotic expansion of smooth solutions of the system (\ref{spp}) and similarly for the numerical solutions of an IRK method applied to (\ref{spp}). The errors of the $y$ and $z$-component are formally considered as 
\beq
\label{errorExp}
y_n - y(t_n)  =  \sum_{\nu \ge 0} \varepsilon^{\nu} (y_{n,\nu} -y_\nu(t_n)), \quad  z_n - z(t_n)  =  \sum_{\nu \ge 0} \varepsilon^{\nu} (z_{n,\nu} -z_\nu(t_n)),
\eeq
where values $y_{\nu}(t)$, $z_{\nu}(t)$ are coefficients of the $\varepsilon$-expansion of the smooth solution for (\ref{spp})  and $y_{n,0}$, $z_{n,0}$, $y_{n, 1}$, $z_{n,1}, ...$, represent the numerical solution of the RK method applied to DAEs of arbitrary order.  Furthermore, the first differences $y_{n,0} -y_0(t_n)$ and $z_{n,0} -z_0(t_n)$ in the expansion (\ref{errorExp}) are the global errors of the RK method applied to the reduced system (\ref{reduced}), i.e.  system of index 1. The other differences for $\nu > 0$ in (\ref{errorExp}) are related to the numerical solutions of the RK method when applied to the DAEs of higher index. 
For details, see 
\cite{hairer1993solving2}.

}

\section{InDC Formulations Applied to SPPs}
\label{sec2}
\setcounter{equation}{0}
\setcounter{figure}{0}
\setcounter{table}{0}
In this section, we consider InDC-IRK method for the solution of SPPs written in the form of (\ref{spp}). The use of uniform nodes is important for the increase of high order of accuracy, if high order RK methods are used in correction loops. This is related to the concept of ``smoothness of the rescaled error vector", when we apply high order RK methods in correction loops, for more details see \cite{christlieb2009integral}. The use of quadrature nodes excluding the left-most endpoint leads to an important stability condition for stiff problems, i.e. the method is L-stable {if A-stable with $R(\infty) = 0$}, see \cite{layton2005implications}. Then, in this paper, we consider the InDC methods with uniform nodes excluding the left-most endpoint.

\subsection{InDC Framework}
We consider InDC procedure \cite{dutt2000spectral} applied to a SSP,
\begin{eqnarray}
\label{mainproblem1}
\begin{array}{l}
y'(t) = f(y,z), \ \ \ y(t_0) = y_0,  \\ 
\varepsilon z'(t) = g(y,z), \ \ \ z(t_0) = z_0. 
\end{array}
\end{eqnarray}
The time interval $[0, T]$ is discretized into intervals $[t_n, t_{n+1}]$, $n = 0,1,...,N-1$ such that
\begin{eqnarray*} 
0 = t_0 < t_1 < t_2 < ... < t_n < ...< t_N = T,
\end{eqnarray*}
with the step size $H$. Then, each interval $[t_n, t_{n+1}]$ is discretized again into $M$ uniform subintervals with quadrature nodes referred to as
\begin{eqnarray}
\label{eq: node} 
t_n \doteq \tau_0 < \tau_1 <\cdots < \tau_M \doteq t_{n+1}.
\end{eqnarray}
Let $h = \frac{H}{M}$ be the size of a substep. {For simplicity of notation, we assume that $h$ is constant.} In this paper, the interval $[t_n,  t_{n+1}]$ will be referred to as a time step while a subinterval $[\tau_m, \tau_{m+1}]$ will be referred to as a substep. We remark that the size of time interval $[t_n, t_{n+1}]$ may vary as the InDC method is a one-step, multi-stage method. We assume the InDC quadrature nodes are uniform, which is a crucial assumption for high order improvement in accuracy, when we apply general high order IRK methods in prediction and correction steps for a classical ODE system \eqref{ODE}, (see discussions in \cite{christlieb2009integral}).
We also note that since $h = \frac{H}{M}$, we will use $\mathcal{O}(h^p)$ and $\mathcal{O}(H^p)$ interchangeably throughout the paper.

Let's assume we have obtained numerical solutions $\hat{y}^{(0)}_m$ and $\hat{z}^{(0)}_m$ approximating the exact solution at $\tau_m$ by using a low order numerical method for (\ref{mainproblem1}) {for a single time interval $[t_n, t_{n+1}]$ with $m=1, \cdots M$}. Here superscript $(0)$ is used to denote the prediction step in the InDC method. {Let us assume that} we build continuous polynomial interpolants $\hat{y}^{(0)}(t)$ and $\hat{z}^{(0)}(t)$ interpolating these discrete values. Now we define the error functions 
\beq
\label{eq: error_function}
e^{(0)}(t) = y(t)-\hat{y}^{(0)}(t), \quad d^{(0)}(t) = z(t)-\hat{z}^{(0)}(t), \quad {t \in [t_n, t_{n+1}]}.
\eeq
Note that $e^{(0)}(t)$ and $d^{(0)}(t)$ are not polynomials in general.
We specify the residual function with respect to $y$ and $z$ via the following set of differential equations
\beq
\label{eq: delta_diff}
\begin{array}{l}
\delta^{(0)}(t) = f(\hat{y}^{(0)}(t),\hat{z}^{(0)}(t))-(\hat{y}^{(0)})'(t), \\[2mm]
\rho^{(0)}(t) = g(\hat{y}^{(0)}(t),\hat{z}^{(0)}(t))-(\varepsilon \hat{z}^{(0)})'(t). 
\end{array}
\eeq
Thus, by subtracting \eqref{eq: delta_diff} from \eqref{mainproblem1}, the error equations about the error functions \eqref{eq: error_function} become
\beq
\label{defint}
\begin{array}{l}
(e^{(0)})'(t) - \delta^{(0)}(t)= f(e^{(0)}(t)+\hat{y}^0(t),d^{(0)}(t)+\hat{z}^{(0)}(t))-f(\hat{y}^{(0)}(t),\hat{z}^{(0)}(t)),\\[2mm]
\varepsilon (d^{(0)})'(t)-\rho^{(0)}(t) = g(e^{(0)}(t)+\hat{y}^0(t),d^{(0)}(t)+\hat{z}^{(0)}(t))-g(\hat{y}^{(0)}(t),\hat{z}^{(0)}(t)).
\end{array}
\eeq
{A low order numerical method can be used to obtain numerical solutions $\hat{e}^{(0)}_m$ and $\hat{d}^{(0)}_m$ at $\tau_m$ by discretizing the error equations \eqref{defint}. Then the numerical solution can be improved as} 
\[
\hat{y}^{(1)}_m = \hat{y}^{(0)}_m +  \hat{e}^{(0)}_m, \quad \hat{z}^{(1)}_m = \hat{z}^{(0)}_m +  \hat{d}^{(0)}_m, \quad \forall m = 0, \cdots M.
\]
Such correction procedures can be repeated in each local time step $[t_n, t_{n+1}]$.
In summary, the strategy of InDC methods is to use a simple numerical method to compute numerical solutions $\hat{y}^{(0)}(t)$ and $\hat{z}^{(0)}(t)$ as prediction, and then to solve a series of correction equations in the integral form based on equations (\ref{defint}), each correction improves the accuracy of numerical solutions from the previous iteration.     

\begin{rem}  (About notations.) In our description of InDC, we let $y_m$ $z_m$, $e^{(k)}_m$, $d^{(k)}_m$ denote the exact solutions and exact error functions (without hat); and let $\hat{y}^{(k)}_m$, $\hat{z}^{(k)}_m$, $\hat{e}^{(k)}_m$, $\hat{d}^{(k)}_m$ denote the numerical approximations (with hat) to the exact solutions and error functions. 
We use subscript $m$ to denote the location $t = \tau_m$ and use superscript $(k)$ to denote the prediction ($k=0$) and correction loops ($k=1, \cdots$). We let $\bar{\cdot}$ denote the vector on InDC quadrature nodes, for example, $\bar{y} = (y_1, \cdots, y_M)$. 
\end{rem}

\subsection{InDC-BE method }

In this subsection, we consider InDC-BE method for the solution of system  \eqref{mainproblem1}. 
We use uniformly distributed quadrature nodes $\tau_1,...,\tau_M$ given by \eqref{eq: node} excluding the left-most endpoint.

\begin{enumerate}
\item 
(Prediction step) Use a {BE} discretization to compute 
$$\bar{\hat{y}}^{(0)} = (\hat{y}^{(0)}_1,...,\hat{y}^{(0)}_m,...,\hat{y}^{(0)}_M)$$ 
as the approximation of the exact solution $\bar{y} = (y_1,...,y_m,...,y_M)$ for (\ref{mainproblem1}) at quadrature nodes $\tau_1,...,\tau_M$. We make the same for the $z$-component. This gives
\beq
\label{eq: Euler-eps}
\begin{array}{l} 
\hat{y}^{(0)}_{m+1} = \hat{y}^{(0)}_m + h f(\hat{y}^{(0)}_{m+1},\hat{z}^{(0)}_{m+1}),\\[2mm]
\varepsilon \hat{z}^{(0)}_{m+1} = \varepsilon \hat{z}^{(0)}_m + h g(\hat{y}^{(0)}_{m+1},\hat{z}^{(0)}_{m+1}),
\end{array}
\eeq
for $m = 0,1,...M-1$.
\item
(Correction loop). {Let $\hat{y}^{(k-1)}$ and $\hat{z}^{(k-1)}$ denote the numerical solutions at the $(k-1)^{th}$ sequence correction, for $k = 1,...,K$ with $K$ the number of correction steps.}  
\begin{enumerate}
\item Denote the error function at the $(k-1)^{th}$ correction by $e^{(k-1)}(t) = y(t) - \hat{y}^{(k-1)}(t)$, where $y(t)$
is the exact solution and $\hat{y}^{(k-1)}(t)$ is {a polynomial of degree $(M-1)$ interpolating}  $\bar{\hat{y}}^{(k-1)}$ at quadrature nodes $\tau_1,...,\tau_M$. Similarly denote $d^{(k-1)}(t) = z(t) - \hat{z}^{(k-1)}(t)$. 
Let $\delta^{(k-1)}(t)$ and $\rho^{(k-1)}(t)$ be defined by equation \eqref{eq: delta_diff}, but with the upper script $(0)$ replaced with $(k-1)$.
We compute the numerical error vector $\bar{\hat{e}}^{(k-1)} = (\hat{e}_1^{(k-1)},...,\hat{e}_M^{(k-1)})$ {where $\hat{e}_m^{(k-1)}$ is the approximation of $e^{(k-1)}(\tau_m)$ by applying a {BE} method to the integral form of (\ref{defint})} with $\hat{e}_m^{(k-1)}$ approximating $e^{(k-1)}(\tau_m)$ by applying a {BE} method to the integral form of (\ref{defint}),
\beq
\label{errorEqs}
\begin{array}{lll}
\displaystyle \hat{e}^{(k-1)}_{m+1} &=& \hat{e}^{(k-1)}_m + h \Delta f^{(k-1)}_{m+1} 
+ \int_{\tau_m}^{\tau_{m+1}}\delta^{(k-1)}(s)ds,\\[2mm]
\displaystyle \varepsilon \hat{d}^{(k-1)}_{m+1} & =& \varepsilon \hat{d}^{(k-1)}_m +  h \Delta g^{(k-1)}_{m+1}
+\int_{\tau_m}^{\tau_{m+1}}\rho^{(k-1)}(s)ds,
\end{array}
\eeq
where 
\beq
\begin{array}{lll}
\displaystyle \Delta f^{(k-1)}_{m+1} &=& f(\hat{y}^{(k-1)}_{m+1} + \hat{e}^{(k-1)}_{m+1},\hat{z}^{(k-1)}_{m+1}+\hat{d}^{(k-1)}_{m+1}) - f(\hat{y}^{(k-1)}_{m+1},\hat{z}^{(k-1)}_{m+1}),\\[2mm]
\displaystyle \Delta g^{(k-1)}_{m+1} &=&g(\hat{y}^{(k-1)}_{m+1} + \hat{e}^{(k-1)}_{m+1},\hat{z}^{(k-1)}_{m+1}+\hat{d}^{(k-1)}_{m+1}) - g(\hat{y}^{(k-1)}_{m+1},\hat{z}^{(k-1)}_{m+1}),
\end{array}
\eeq
and
\beq
\label{appint}
\begin{array}{l}
\int_{\tau_m}^{\tau_{m+1}}\delta^{(k-1)}(s)ds  =  \int_{\tau_m}^{\tau_{m+1}}f(\hat{y}^{(k-1)}(s),\hat{z}^{(k-1)}(s))ds - \hat{y}^{(k-1)}_{m+1} + \hat{y}^{(k-1)}_{m},\\[2mm]
\int_{\tau_m}^{\tau_{m+1}}\rho^{(k-1)}(s)ds = \int_{\tau_m}^{\tau_{m+1}}g(\hat{y}^{(k-1)}(s),\hat{z}^{(k-1)}(s))ds - \varepsilon \hat{z}^{(k-1)}_{m+1} + \varepsilon \hat{z}^{(k-1)}_{m}.
\end{array}
\eeq
The integral terms $\int_{\tau_m}^{\tau_{m+1}}$ in equations \eqref{appint} are approximated by a numerical quadrature.  
Especially, let $S$ be the integration matrix{;} its $(m, k)$ element is
\[
S^{m,k} = \frac{1}{h}\int_{\tau_m}^{\tau_{m+1}}\alpha_k(s) d s, \quad \mbox{for} \quad m=0, \cdots, M-1, \quad k=1, \cdots M,
\] 
where 
$\alpha_k(s)$
is the Lagrangian basis function based on the node $\tau_k$. {Note that $S^{m, k}$ can be obtained from the computation based on a standard interval $[0, 1]$.}
Let 
\beq
\label{eq: Sm}
S^m(\bar{f}) = \sum_{j = 1}^{M} S^{m,j}f(y_j,z_j),
\eeq
then 
\[
hS^m(\bar{f})-\int_{\tau_m}^{\tau_{m+1}}f(y(s), z(s))ds = \mathcal{O}(h^{M+1}),
\] 
for any smooth function $f$.  In other words, the quadrature formula given by $hS^{m}(\bar{f})$ approximates the exact integration with $(M+1)^{th}$ order of accuracy {\em locally}.
\item Update the approximate solutions $\bar{\hat{y}}^{(k)} = \bar{\hat{y}}^{(k-1)}+ \bar{\hat{e}}^{(k-1)}$ and $ \bar{\hat{z}}^{(k)} = \bar{\hat{z}}^{(k-1)}+  \bar{\hat{d}}^{(k-1)}$.
\end{enumerate}
\end{enumerate}
\begin{rem} 
Using the notation introduced in equation \eqref{eq: Sm}, we get from equation (\ref{errorEqs}) and \eqref{appint},
\beq
\label{errorEqs2}
\begin{array}{l} 
\hat{y}^{(k)}_{m+1} = \hat{y}^{(k)}_m + h \Delta f_{m+1}^{(k-1)} + h S^{m}(\bar{\hat{f}}^{(k-1)}),\\[2mm]
\varepsilon \hat{z}^{(k)}_{m+1} =\varepsilon  \hat{z}^{(k)}_m + h \Delta g_{m+1}^{(k-1)}  + h S^{m}(\bar{\hat{g}}^{(k-1)}).
\end{array}
\eeq
\end{rem}
\begin{rem}
Since we consider the nodes excluding the left most quadrature point $t_0$, the order of approximation for integration/interpolation will be one order lower than the usual one considered in \cite{dutt2000spectral, christlieb2009integral}. 
\end{rem} 
\begin{rem}
The InDC-BE described above, can be generalized to the InDC-IRK method, for solving SPPs \eqref{mainproblem1}. To avoid heavy notations from the InDC-IRK method and for a better presentation of the paper, we organize the description of InDC-IRK method and the corresponding error estimates in 
Appendix.  
\end{rem}


\subsection{$\eps$-asymptotic expansion}
\label{sec: eps-exp}

In this section, we introduce $\eps$-asymptotic expansion of the exact and numerical solution for system \eqref{mainproblem1}. This $\eps$-asymptotic expansion will be useful to study the behavior of the local error for the InDC method. 

{
We are mainly interested in smooth solutions of  \eqref{mainproblem1} which provide the $\eps$-asymptotic expansions for $t>0$, of the form
\beq
\label{eq: exact-esp}
y(t)=\sum_{j=0}^{\infty} y_{j}(t) \eps^j, \quad
z(t)=\sum_{j=0}^{\infty} z_{j} (t) \eps^j.
\eeq
As just pointed out in the introduction, we suppose that the initial values of \eqref{mainproblem1} lie on the smooth solution, i.e. that an expansion of the form (\ref{eq: exact-esp}) holds}. 

From (\ref{eq: exact-esp}) we note that the exact solutions have a power series in $\eps$ and, considered a truncated series, a remainder after any $N+1$ number of terms could be obtained or estimated. In particular, for any $t \in [0, \bar{t}]$, the remainder is bounded above by a term $C_N\eps^{N+1}$ with $C_N>0$, for $\eps$ small enough, i.e.
\beq
\label{eq: exact-espbis}
y(t)=\sum_{j=0}^{N} y_{j}(t) \eps^j + \mathcal{O}(\eps^{N+1}),\quad
z(t)=\sum_{j=0}^{N} z_{j} (t) \eps^j + \mathcal{O}(\eps^{N+1}).
\eeq

Furthermore we note that a sequence of DAEs arise in the study of \eqref{mainproblem1}. In fact, the coefficients in the expansion (\ref{eq: exact-esp}) are the solutions of DAEs of different indices, for more details see Chap.~VI.3 of \cite{hairer1993solving2}. 
This is obtained by inserting the $\varepsilon$-expansion of the exact solution \eqref{eq: exact-esp} into \eqref{mainproblem1} and collecting terms of equal powers of $\eps$. 
\beq
\label{eq: exact_eps_0}
\eps^0: \quad
\left \{
\begin{array}{l}
y'_0 = f(y_0, z_0) \\
0 =  g(y_0, z_0) 
\end{array}
\right.
,
\eeq
\beq
\label{eq: exact_eps_1}
\eps^1: \quad
\left \{
\begin{array}{l}
y'_1 = f_y (y_0, z_0)y_1+f_z (y_0, z_0)z_1\doteq \mathbb{F}_1 \\[2mm]
z'_0 = g_y (y_0, z_0)y_1+g_z (y_0, z_0)z_1 \doteq \mathbb{G}_1\\
\end{array}
\right.
,
\eeq
\beq
\cdots \nonumber
\eeq
\beq
\label{eq: exact_eps_nu}
\eps^{\nu}: \quad
\left \{
\begin{array}{ll}
y'_{\nu} &= f_y (y_0, z_0)y_{\nu}+f_z (y_0, z_0)z_{\nu} + \phi_{\nu} (y_0, z_0,\cdots, y_{\nu-1},z_{\nu-1})\doteq \mathbb{F}_{\nu} \\[2mm]
z'_{\nu-1} &= g_y (y_0, z_0)y_{\nu}+g_z (y_0, z_0)z_{\nu} + \psi_{\nu} (y_0, z_0,\cdots, y_{\nu-1},z_{\nu-1})\doteq \mathbb{G}_{\nu}\\
\end{array}
\right.
,
\eeq
with initial values $y_\nu(0)$, $z_\nu(0)$ known from \eqref{eq: exact-esp}. We observe that system \eqref{eq: exact_eps_0} under the condition \eqref{redg} is a DAE of index 1. According to  \cite{hairer1993solving2}, if we consider  \eqref{eq: exact_eps_0} and \eqref{eq: exact_eps_1} together, we have a differential algebraic system of index 2. In general  \eqref{eq: exact_eps_0}-\eqref{eq: exact_eps_nu} is a differential algebraic system of index $\nu$.\

%
{Now let us look for an $\eps$-asymptotic expansion of the numerical solution at the $k^{th}$ correction step of the InDC-BE method in the form
\beq
\label{eq: exp_expand}
\hat{y}^{(k)}_m= \sum_{\nu=0}^{\infty} \hat{y}^{(k)}_{m, \nu} \eps^\nu, \qquad
\hat{z}^{(k)}_m =\sum_{\nu=0}^{\infty} \hat{z}^{(k)}_{m, \nu} \eps^\nu.
\eeq
The case of $k=0$ corresponds to the prediction step of InDC method. Then, inserting the above ansatz \eqref{eq: exp_expand} into the numerical scheme \eqref{eq: Euler-eps}-\eqref{appint}, and collecting terms of equal powers of $\varepsilon$, we have the following: }
\bit
\item for the prediction step ($k=0$)
\begin{eqnarray}
\label{eq:Euler-0}
&\eps^0&: \quad
\left \{
\begin{array}{l}
\hat{y}^{(0)}_{m+1,0} = \hat{y}^{(0)}_{m,0} + h f(\hat{y}^{(0)}_{m+1,0}, \hat{z}^{(0)}_{m+1,0}), \\ [2mm]
0 = g(\hat{y}^{(0)}_{m+1,0}, \hat{z}^{(0)}_{m+1,0}),
\end{array}
\right.\\
\label{eq:Euler-1}
&\eps^1&: \quad
\left \{
\begin{array}{l}
\hat{y}^{(0)}_{m+1,1} = \hat{y}^{(0)}_{m,1} + h \hat{\mathbb{F}}^{(0)}_{m+1,1},
\\  [2mm]
\hat{z}^{(0)}_{m+1,0} = \hat{z}^{(0)}_{m,0} +  h \hat{\mathbb{G}}^{(0)}_{m+1,1},
\end{array}
\right.
\end{eqnarray}
where
\beq
\left \{
\begin{array}{l}\label{index1f}
\hat{\mathbb{F}}^{(0)}_{m+1,1} \doteq f_y (\hat{y}^{(0)}_{m+1,0}, \hat{z}^{(0)}_{m+1,0}) \hat{y}^{(0)}_{m+1,1} + f_z (\hat{y}^{(0)}_{m+1,0}, \hat{z}^{(0)}_{m+1,0}) \hat{z}^{(0)}_{m+1,1}, \\[2mm]
\hat{\mathbb{G}}^{(0)}_{m+1,1} \doteq
 g_y (\hat{y}^{(0)}_{m+1,0}, \hat{z}^{(0)}_{m+1,0})
\hat{y}^{(0)}_{m+1,1} + g_z (\hat{y}^{(0)}_{m+1,0}, \hat{z}^{(0)}_{m+1,0})
\hat{z}^{(0)}_{m+1,1},
\end{array}
\right.
\eeq
\item {for the correction steps ($k\ge1$),}
\begin{eqnarray}
\label{eq:Euler-0-k}
&\eps^0&: \quad
\left \{
\begin{array}{lll}
\hat{y}^{(k)}_{m+1,0} &=& \hat{y}^{(k)}_{m,0} + h \Delta \hat{f}^{(k-1)}_{m+1,0}
+ h S^m(\bar{\hat{f}}^{(k-1)}_0), \\ [2mm]
0 &=& h \Delta \hat{g}^{(k-1)}_{m+1,0}
 +  hS^m(\bar{\hat{g}}^{(k-1)}_0), \\[2mm]
\end{array}
\right.
\\
\label{eq:Euler-1-k}
&\eps^1&: \quad
\left \{
\begin{array}{ll}
\hat{y}^{(k)}_{m+1,1} & = \hat{y}^{(k)}_{m,1} + h \Delta \hat{\mathbb{F}}^{(k-1)}_{m+1, 1}
+ h S^m(\bar{\hat{\mathbb{F}}}^{(k-1)}_1), \\[2mm] 
\hat{z}^{(k)}_{m+1,0} &= \hat{z}^{(k)}_{m,0} + h \Delta \hat{\mathbb{G}}^{(k-1)}_{m+1, 1}
+ h S^m(\bar{\hat{\mathbb{G}}}^{(k-1)}_1),\\ [2mm]
\end{array}
\right.
\end{eqnarray}
{where in \eqref{eq:Euler-0-k},} 
\beq
\left \{
\begin{array}{lll}\label{BEf}
\Delta \hat{f}^{(k-1)}_{m+1,0}& =&  f(\hat{y}^{(k)}_{m+1,0}, \hat{z}^{(k)}_{m+1,0})
- f(\hat{y}^{(k-1)}_{m+1,0}, \hat{z}^{(k-1)}_{m+1,0}),\\ [2mm]
\Delta \hat{g}^{(k-1)}_{m+1,0}& =&  g(\hat{y}^{(k)}_{m+1,0}, \hat{z}^{(k)}_{m+1,0})
- g(\hat{y}^{(k-1)}_{m+1,0}, \hat{z}^{(k-1)}_{m+1,0}),\\
\end{array}
\right.
\eeq
{and in \eqref{eq:Euler-1-k},}
\begin{eqnarray}
\Delta \hat{\mathbb{F}}^{(k-1)}_{m+1,1} &=&\hat{\mathbb{F}}^{(k)}_{m+1,1} - \hat{\mathbb{F}}^{(k-1)}_{m+1,1}\notag\\ [2mm]
&=&\left(f_y (\hat{y}^{(k)}_{m+1,0}, \hat{z}^{(k)}_{m+1,0})
\hat{y}^{(k)}_{m+1,1}  + f_z (\hat{y}^{(k)}_{m+1,0}, \hat{z}^{(k)}_{m+1,0})
\hat{z}^{(k)}_{m+1,1} \right)\notag \\ [2mm]
&&-\left(f_y (\hat{y}^{(k-1)}_{m+1,0}, \hat{z}^{(k-1)}_{m+1,0})
\hat{y}^{(k-1)}_{m+1,1}  + f_z (\hat{y}^{(k-1)}_{m+1,0}, \hat{z}^{(k-1)}_{m+1,0})
\hat{z}^{(k-1)}_{m+1,1} \right),  
\label{eq: tempDF1}
\end{eqnarray}
where 
\beqa
\label{eq: Fk}
\hat{\mathbb{F}}^{(k)}_{m+1,1} = f_y(\hat{y}^{(k)}_{m+1,0},\hat{z}^{(k)}_{m+1,0}) \hat{y}^{(k)}_{m+1,1} + f_z(\hat{y}^{(k)}_{m+1,0},\hat{z}^{(k)}_{m+1,0}) \hat{z}^{(k)}_{m+1,1}. 
\eeqa
\eit
We note that both equations~\eqref{eq:Euler-0}-\eqref{eq:Euler-1} for the prediction step ($k = 0$), and equations~\eqref{eq:Euler-0-k}-\eqref{eq:Euler-1-k} for the correction step ($k\ge1$), are consistent discretizations of equation \eqref{eq: exact_eps_0}-\eqref{eq: exact_eps_1}. It is possible to generalize the $\varepsilon$-asymptotic expansion to $\eps^\nu$ ($\nu \ge2$), but we skip this to avoid heavy notations.



{Finally,  let $\eps$-asymptotic expansion of error functions $e^{(k)}(t)$,  $d^{(k)}(t)$ at the $k^{th}$ iteration be
\beq
\label{eq: eps_error}
\left (
\begin{array}{l}
e^{(k)}_{m}\\
d^{(k)}_{m}
\end{array}
\right )
=
\left (
\begin{array}{l}
\sum_{\nu=0}^{\infty} e^{(k)}_{m, \nu} \eps^\nu \\
\sum_{\nu=0}^{\infty} d^{(k)}_{m, \nu} \eps^\nu
\end{array}
\right )
=
\left (
\begin{array}{l}
\sum_{\nu=0}^{\infty} (y_{m, \nu} - \hat{y}^{(k)}_{m, \nu}) \eps^\nu \\
\sum_{\nu=0}^{\infty} (z_{m, \nu} - \hat{z}^{(k)}_{m, \nu}) \eps^\nu
\end{array}
\right ).
\eeq
}

{Note that in the above ansatz, we consider truncated series of (\ref{eq: eps_error}) with estimate of the remainder as}

\beq\label{EstRem} 
\left (
\begin{array}{l}
e^{(k)}_{m}\\
d^{(k)}_{m}
\end{array}
\right )
=
\left (
\begin{array}{l}
\displaystyle e^{(k)}_{m, 0}  +  e^{(k)}_{m, 1} \eps  + \cdots  e^{(k)}_{m, \nu} \eps^\nu +  \mathcal{O}(\varepsilon^{\nu+1}) \\
\displaystyle d^{(k)}_{m, 0}  +  d^{(k)}_{m, 1} \eps  + \cdots  d^{(k)}_{m, \nu} \eps^\nu + \mathcal{O}(\varepsilon^{\nu+1}) \\
\end{array}
\right ),
\eeq
where a finite number of terms $\nu$ are taken. We will see that $\nu$ is related to the value $q^{(0)}$, i.e. the stage order of the implicit RK method in the prediction step $k = 0$.


In {the} later part of this paper, our goal is to give rigorous estimates of the coefficients $e^{(k)}_{m, \nu} = y_{m, \nu} - \hat{y}^{(k)}_{m, \nu}$ and $d^{(k)}_{m, \nu} = z_{m, \nu} - \hat{z}^{(k)}_{m, \nu}$, given  by (\ref{eq: exact-esp}) and  (\ref{eq: exp_expand}) for $1 \le \nu \le q^{(0)}+1$ and, finally, estimates of the remainders will be given.   
 

Similarly, we consider the $\eps$-asymptotic expansion of numerical approximations of error functions $\hat{e}^{(k)}(t)$, $\hat{d}^{(k)}(t)$ at the $k^{th}$ iteration
\beq
\label{eq: eps_error_hat}
\left (
\begin{array}{l}
\hat{e}^{(k)}_{m}\\
\hat{d}^{(k)}_{m}
\end{array}
\right )
=
\left (
\begin{array}{l}
\sum_{\nu=0}^{\infty} \hat{e}^{(k)}_{m, \nu} \eps^\nu\\
\sum_{\nu=0}^{\infty} \hat{d}^{(k)}_{m, \nu} \eps^\nu
\end{array}
\right )
=
\left (
\begin{array}{l}
\sum_{\nu=0}^{\infty} (\hat{y}^{(k+1)}_{m, \nu} - \hat{y}^{(k)}_{m, \nu}) \eps^\nu \\
\sum_{\nu=0}^{\infty} (\hat{z}^{(k+1)}_{m, \nu} - \hat{z}^{(k)}_{m, \nu}) \eps^\nu
\end{array}
\right ).
\eeq
{ Note that} combining \eqref{eq: eps_error} and \eqref{eq: eps_error_hat}, we get with $k, \nu \ge0$,  $m = 0, \cdots M$,
\beq
\label{eq: err_errhat}
e^{(k)}_{m, \nu} = \hat{e}^{(k)}_{m, \nu} + e^{(k+1)}_{m, \nu},\quad
d^{(k)}_{m, \nu} = \hat{d}^{(k)}_{m, \nu} + d^{(k+1)}_{m, \nu}.
\eeq

\begin{rem}
Similar $\eps$-asymptotic expansions can be given for the numerical solutions of the InDC-IRK method. Again, to avoid heavy notations, we organize them in Appendix.
\end{rem}

\section{Main results and numerical evidence}
\label{sec3}
\setcounter{equation}{0}
\setcounter{figure}{0}
\setcounter{table}{0}

In this section, we present the main theoretical results in the form of theorems, and provide numerical evidence supporting the main theorems. We will provide a rigorous mathematical proof in the next section. 

\subsection{Main results}
The aim of this section is to present convergence results of the InDC-BE and InDC-IRK method  when applied to \eqref{mainproblem1}. 

\begin{thm} 
\label{thm: IDC_BE}
Consider the stiff system \eqref{spp}, \eqref{eq: gz} with initial values $y(0)$, $z(0)$ admitting a smooth solution. 
Consider the InDC-BE method constructed with $M$ uniformly distributed quadrature nodes excluding the left-most point and $K$ correction steps. 
Then the global errors after $K$ correction satisfy, 
\beq
\label{eq: final_estimate1}
\begin{array}{lll}
e^{(K)}_n = \hat{y}^{(K)}_n - y(t_n) &=& \mathcal{O}(H^{\min\{K+1, M\}}) + \mathcal{O}(\eps H),\\ 
d^{(K)}_n=\hat{z}^{(K)}_n - z(t_n) &=& \mathcal{O}(H^{\min\{K+1, M\}}) + \mathcal{O}(\eps H),
\end{array}
\eeq
for $\eps \le cH$ and for any fixed constant $c>0$, where  $H = Mh$ is one InDC time step. The estimates hold uniformly for $H\le H_0$ and $nH \le Const$.
\end{thm}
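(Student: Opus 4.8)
The plan is to work entirely within the $\eps$-asymptotic framework of Section~\ref{sec: eps-exp}: I would insert the truncated expansion \eqref{eq: exact-espbis} of the exact solution and the expansion \eqref{eq: exp_expand} of the numerical solution into the global error, so that by \eqref{eq: eps_error} and \eqref{EstRem} the error splits as $e^{(K)}_n=\sum_{\nu=0}^{q^{(0)}+1}e^{(K)}_{n,\nu}\eps^\nu+\mathcal{O}(\eps^{q^{(0)}+2})$ and likewise for $d^{(K)}_n$, where $q^{(0)}=1$ is the stage order of backward Euler. It then suffices to estimate each coefficient $e^{(K)}_{n,\nu}=y_{n,\nu}-\hat y^{(K)}_{n,\nu}$, $d^{(K)}_{n,\nu}=z_{n,\nu}-\hat z^{(K)}_{n,\nu}$ separately, using the observation (already recorded in the text) that \eqref{eq:Euler-0}--\eqref{eq:Euler-1} and \eqref{eq:Euler-0-k}--\eqref{eq:Euler-1-k} are consistent discretizations of the DAE hierarchy \eqref{eq: exact_eps_0}--\eqref{eq: exact_eps_nu}; hence $\{e^{(K)}_{n,\nu},d^{(K)}_{n,\nu}\}$ is precisely the global error of the InDC-BE method applied to the DAE of index $\nu+1$. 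At the end I would reassemble the pieces and use the hypothesis $\eps\le cH$ to fold all the $\nu\ge1$ contributions into the single term $\mathcal{O}(\eps H)$.

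For the leading coefficient ($\nu=0$), the system \eqref{eq: exact_eps_0} is the index-1 DAE, which under \eqref{redg} is equivalent to the explicit ODE \eqref{eqy}; by the Remarks following Definition~\ref{DefSA}, for a stiffly accurate method the iterates $\hat y^{(K)}_{n,0},\hat z^{(K)}_{n,0}$ coincide with those produced by the same scheme applied to \eqref{eqy} along the manifold $z=\mathcal{G}(y)$. I would then run the classical InDC order-increase argument by induction on the correction index $k$, driven by the identity \eqref{eq: err_errhat}, $e^{(k)}_{n,0}=\hat e^{(k)}_{n,0}+e^{(k+1)}_{n,0}$: the prediction gives order $1$, and each correction loop solves the integral error equation to one additional order, so that $e^{(k)}_{n,0},d^{(k)}_{n,0}=\mathcal{O}(H^{k+1})$ until this is capped at $\mathcal{O}(H^{M})$ by the spectral quadrature accuracy $hS^m(\bar f)-\int_{\tau_m}^{\tau_{m+1}}f=\mathcal{O}(h^{M+1})$ recorded after \eqref{eq: Sm} (one order lower than standard SDC because the left-most node is excluded, cf.\ the Remark following \eqref{errorEqs2}). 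This produces $\mathcal{O}(H^{\min\{K+1,M\}})$, matching the first term of \eqref{eq: final_estimate1}.

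For $\nu\ge1$ the systems \eqref{eq: exact_eps_1}--\eqref{eq: exact_eps_nu} are DAEs of index $\ge2$, and here order reduction enters. Using the expanded schemes \eqref{eq:Euler-1}, \eqref{eq:Euler-1-k} together with the differentiated constraint appearing as the second line of \eqref{eq: exact_eps_1}, I would show that the algebraic coupling through $g_z$ (invertible by \eqref{redg}) forces the $\nu=1$ error to be governed by the \emph{stage order} of the predictor, giving $e^{(K)}_{n,1},d^{(K)}_{n,1}=\mathcal{O}(H^{q^{(0)}})=\mathcal{O}(H)$ irrespective of the number of corrections, while the coefficients for $\nu\ge2$ remain uniformly $\mathcal{O}(1)$. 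Combined with $\eps\le cH$, every $\nu\ge1$ term then obeys $\eps^\nu e^{(K)}_{n,\nu}=\mathcal{O}(\eps H)$ (for $\nu=1$ directly from $e^{(K)}_{n,1}=\mathcal{O}(H)$, and for $\nu\ge2$ from $\eps^\nu e^{(K)}_{n,\nu}\le\eps(cH)^{\nu-1}\mathcal{O}(1)\subseteq\mathcal{O}(\eps H)$); the truncation remainder from \eqref{EstRem}, being $\mathcal{O}(\eps^{q^{(0)}+2})\subseteq\mathcal{O}(\eps H)$, is absorbed as well. Summation over $n$ with $nH\le\mathrm{Const}$ preserves the orders by the usual discrete Gronwall/stability estimate, uniformly for $H\le H_0$.

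The step I expect to be the main obstacle is the $\nu=1$ estimate: proving that the correction loops do \emph{not} lift the index-2 component above the predictor stage order, and doing so with constants uniform in both $\eps$ and $H$. This requires careful bookkeeping of how the error in the algebraic variable $z$ propagates through the integral correction equations \eqref{errorEqs}--\eqref{appint} and how it is pinned to the $y$-error by the constraint; the exclusion of the left endpoint, and the interpolation/quadrature orders it dictates, must be threaded through this argument. The same mechanism must moreover be shown to keep the higher ($\nu\ge2$) coefficients bounded, so that the remainder in \eqref{EstRem} is genuinely $\eps$-uniform and the final collapse into $\mathcal{O}(\eps H)$ is legitimate.
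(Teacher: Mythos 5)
Your proposal follows essentially the same route as the paper: expand the global error in powers of $\eps$, identify the coefficient $e^{(K)}_{n,\nu},d^{(K)}_{n,\nu}$ with the global error of InDC-BE applied to the index-$(\nu+1)$ DAE, prove the $\eps^0$ term gains one order per correction by reducing to the ODE \eqref{eqy} on the manifold $z=\mathcal{G}(y)$, show the $\eps^1$ term is pinned at the predictor stage order through the $g_z$-coupling (the paper's Lemma~\ref{lemma3}), and reassemble using $\eps\le cH$ together with a remainder bound (the paper proves $\mathcal{O}(\eps^{\nu+1}/H)$ rather than $\mathcal{O}(\eps^{\nu+1})$, but either absorbs into $\mathcal{O}(\eps H)$ under $\eps\le cH$). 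You also correctly single out the $\nu=1$ estimate as the crux, which is exactly where the paper expends its main technical effort.
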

\begin{thm} 
\label{thm: IDC_RK}
Consider the stiff system \eqref{spp}, \eqref{eq: gz} with initial values $y(0)$, $z(0)$ admitting a smooth solution. 
Consider the InDC method constructed with $M$ uniformly distributed quadrature nodes excluding the left-most point and  a stiffly accurate IRK method of order $p^{(0)}$, stage order $q^{(0)}$ with $(q^{(0)} < p^{(0)})$ for the prediction step. Apply IRK methods of different classical orders $(p^{(1)}, p^{(2)}, \ldots,  p^{(K)})$ 
in the correction loops, $k=1, \cdots K$. {Assume that each of these {IRK} methods in the prediction and correction loops are stiffly accurate and the matrices $A$ are nonsingular.}
Then the global errors after $K$ correction loops satisfy the estimates
\beq
\begin{array}{lll}\label{final_estimate}
e^{(K)}_n = \hat{y}^{(K)}_n - y(t_n) &=& \mathcal{O}(H^{\min\{s_{K}, M\}}) +\mathcal{O}(\eps H^{q^{(0)}}),\\
d^{(K)}_n = \hat{z}^{(K)}_n - z(t_n) &=& \mathcal{O}(H^{\min\{s_K, {M}\}}) +\mathcal{O}(\eps H^{q^{(0)}}),
\end{array}
\eeq
for $\eps \le cH$ and for any fixed constant $c>0$, $s_{K} = \sum_{k=0}^{K} p^{(k)}$, and $H = Mh$ is one InDC time step. The estimates hold uniformly for $H\le H_0$ and $nH \le Const$.
\end{thm}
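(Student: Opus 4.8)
The plan rests on the $\eps$-asymptotic expansion of the global error. Inserting \eqref{eq: eps_error}, each coefficient $e^{(K)}_{n,\nu}=y_{n,\nu}-\hat y^{(K)}_{n,\nu}$ (and its $d$-analogue) is precisely the global error of the InDC-IRK iteration applied to the $\nu$-th member of the DAE hierarchy \eqref{eq: exact_eps_0}--\eqref{eq: exact_eps_nu}, whose index is $\nu+1$. I would therefore establish three separate estimates and reassemble the truncated series \eqref{EstRem}: (i) the leading coefficient $e^{(K)}_{n,0}$, driven by the index-$1$ system \eqref{eq: exact_eps_0}, is $\mathcal{O}(H^{\min\{s_K,M\}})$; (ii) for $1\le\nu\le q^{(0)}$ the higher coefficients obey $e^{(K)}_{n,\nu}=\mathcal{O}(H^{q^{(0)}-\nu+1})$, with the same bound for $d^{(K)}_{n,\nu}$; and (iii) the truncation remainder is $\mathcal{O}(\eps^{q^{(0)}+1})$. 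Since the orders in (ii) and (iii) are each paired with a compensating power of $\eps$, the hypothesis $\eps\le cH$ collapses every term with $\nu\ge1$ into one $\mathcal{O}(\eps H^{q^{(0)}})$ contribution, leaving the two terms of \eqref{final_estimate}. This is exactly the structure of the InDC-BE proof of Section~\ref{sec4} specialized by $p^{(0)}=q^{(0)}=1$; the present task is to carry it through for general stiffly accurate IRK methods.

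For the leading coefficient I would exploit stiff accuracy. By the Remarks following Definition~\ref{DefSA}, a stiffly accurate IRK method applied to the index-$1$ DAE \eqref{eq: exact_eps_0} produces a numerical solution that coincides with the same method applied to the reduced ODE $y'=f(y,\mathcal G(y))$, with the $z$-coefficient pinned to the manifold, $\hat z_{\cdot,0}=\mathcal G(\hat y_{\cdot,0})$. Consequently the InDC-IRK iteration acting on the $\eps^0$ system is literally a classical InDC-IRK scheme for a non-stiff ODE, so the established InDC convergence theory \cite{christlieb2009integral, idcark} applies directly: a prediction of order $p^{(0)}$ together with corrections of orders $p^{(1)},\dots,p^{(K)}$ raises the order additively to $s_K=\sum_{k=0}^{K}p^{(k)}$, capped at the $\mathcal{O}(H^M)$ accuracy of the $M$-point, left-endpoint-excluded quadrature $S^m$ introduced in Section~\ref{sec2}. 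Pushing $\hat z_{\cdot,0}=\mathcal G(\hat y_{\cdot,0})$ through the Lipschitz map $\mathcal G$ transfers the same order to the $z$-coefficient, which gives estimate (i).

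Estimate (ii) is the crux and the expected main obstacle, since it encodes the order-reduction phenomenon. The $\eps^1$ system \eqref{eq: exact_eps_1} is a genuine index-$2$ DAE: $y_1$ is differential, while $z_1$ must be recovered algebraically from $g_y y_1+g_z z_1=z_0'$ via the invertible $g_z$ of \eqref{redg}, the right-hand side now carrying the derivative of the previously algebraic component $z_0$. In the scheme \eqref{eq:Euler-1}, \eqref{eq:Euler-1-k} this $z_0'$ is reproduced only through a difference quotient of the computed $\eps^0$ $z$-coefficients, whose fidelity is limited by the stage order $q^{(0)}$ of the prediction method --- the very mechanism producing the $\mathcal{O}(\eps h^{q})$ defect in the single-IRK estimate recalled in Section~\ref{sec2.0}. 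I would first show that the prediction injects an $\mathcal{O}(H^{q^{(0)}})$ error into $e^{(0)}_{n,1}$ and $d^{(0)}_{n,1}$, then run the correction recursion $e^{(k+1)}_{m,1}=e^{(k)}_{m,1}-\hat e^{(k)}_{m,1}$ read off from \eqref{eq: err_errhat}, proving that the correction integrator --- trapped by the same index-$2$/stage-order bottleneck --- improves only the smooth $\eps^0$-level part and leaves the stage-order-limited contribution at $\mathcal{O}(H^{q^{(0)}})$. The analogous computation on the index-$(\nu+1)$ system yields $e^{(K)}_{n,\nu}=\mathcal{O}(H^{q^{(0)}-\nu+1})$. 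The delicate point, and where I expect the real work, is to show rigorously that the integral correction cannot remove the $q^{(0)}$-order defect, i.e. that order reduction survives the whole loop; here both the stiff accuracy and the nonsingularity of $A$ are indispensable, the latter guaranteeing $R(\infty)=0$ so that the reduced $z$-updates \eqref{eq:1-10bis} remain well defined and uniformly bounded.

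Finally I would bound the remainder of \eqref{EstRem} by a Gronwall/perturbation argument on the coupled InDC recursion, following the treatment of the $\eps$-expansion remainder in Chap.~VI.3 of \cite{hairer1993solving2}, to obtain $\mathcal{O}(\eps^{q^{(0)}+1})$ uniformly for $H\le H_0$, $\eps\le cH$ and $nH\le\mathrm{Const}$. Combining (i)--(iii),
\[
e^{(K)}_n=\underbrace{e^{(K)}_{n,0}}_{\mathcal{O}(H^{\min\{s_K,M\}})}+\sum_{\nu=1}^{q^{(0)}}\eps^{\nu}\,\underbrace{e^{(K)}_{n,\nu}}_{\mathcal{O}(H^{q^{(0)}-\nu+1})}+\mathcal{O}(\eps^{q^{(0)}+1}),
\]
and $\eps\le cH$ gives $\eps^{\nu}H^{q^{(0)}-\nu+1}=(\eps/H)^{\nu-1}\eps H^{q^{(0)}}\le c^{\nu-1}\eps H^{q^{(0)}}$ as well as $\eps^{q^{(0)}+1}\le c^{q^{(0)}}\eps H^{q^{(0)}}$, so every $\nu\ge1$ term is absorbed into $\mathcal{O}(\eps H^{q^{(0)}})$. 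Hence $e^{(K)}_n=\mathcal{O}(H^{\min\{s_K,M\}})+\mathcal{O}(\eps H^{q^{(0)}})$, and the identical argument for $d^{(K)}_n$ completes \eqref{final_estimate}.
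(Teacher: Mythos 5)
Your proposal follows essentially the same route as the paper's proof in the Appendix: the $\eps$-expansion of the global error, the reduction of the $\eps^0$ coefficient to classical non-stiff InDC convergence theory via stiff accuracy and the manifold relation $\hat z_{\cdot,0}=\mathcal{G}(\hat y_{\cdot,0})$ (Lemma~\ref{lem1_for_main}), the stage-order-limited estimates for the higher coefficients driven by the index-$(\nu+1)$ systems (Lemma~\ref{lem2_for_main}), and a Hairer--Wanner-type perturbation bound for the remainder (Theorem~\ref{prop: remainder}), followed by the same absorption of all $\nu\ge1$ terms into $\mathcal{O}(\eps H^{q^{(0)}})$ using $\eps\le cH$.

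One bookkeeping point needs repair. You truncate the expansion at $\nu=q^{(0)}$ and assert a remainder of $\mathcal{O}(\eps^{q^{(0)}+1})$ uniformly in $H$; but the Gronwall/perturbation argument you invoke (and the paper's Theorem~\ref{prop: remainder}) yields a remainder of size $\mathcal{O}(\eps^{\nu+1}/H)$ --- the division by $H$ comes from accumulating local defects of size $\mathcal{O}(\eps^{\nu+1}/h)$ in the algebraic component. At truncation level $q^{(0)}$ this gives only $\mathcal{O}(\eps^{q^{(0)}+1}/H)\le c^{q^{(0)}}\eps H^{q^{(0)}-1}$, one power of $H$ short of the claimed $\mathcal{O}(\eps H^{q^{(0)}})$. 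The fix is exactly what the paper does: carry the expansion one term further, to $\nu=q^{(0)}+1$, where the coefficient $e^{(K)}_{n,q^{(0)}+1}=\mathcal{O}(1)$ contributes $\mathcal{O}(\eps^{q^{(0)}+1})\le c^{q^{(0)}}\eps H^{q^{(0)}}$ and the remaining tail $\mathcal{O}(\eps^{q^{(0)}+2}/H)$ is likewise absorbed. With that adjustment your outline matches the paper's argument step for step.
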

 
From the above two theorems, it is observed that the order of convergence for the first terms in (\ref{eq: final_estimate1}) and (\ref{final_estimate}) increases with the correction iteration $k$ , whereas the order for later terms does not change with the number of corrections $k$.
 
We note that (\ref{eq: eps_error}),  but replacing $m$ with $n$, can be adopted to represent the $\eps$-asymptotic expansion of the global error functions $e^{(K)}_n$ and $d^{(K)}_n$ at the $K$-th correction step, where $e^{(K)}_{n,\nu}$ and $d^{(K)}_{n,\nu}$ for $\nu = 0,1,\cdots$, are the global errors of {InDC stiffly accurate (SA) IRK method (InDC SA-IRK}), applied to the differential algebraic systems of different indices \eqref{eq: exact_eps_0}-\eqref{eq: exact_eps_nu}. In section \ref{sec4}, we only prove Theorem \ref{thm: IDC_BE} for estimating $e^{(K)}_{n,\nu}$ and $d^{(K)}_{n,\nu}$ with $\nu=0, 1$ for the InDC-BE method.

To avoid heavy notations and technical details, we prove Theorem \ref{thm: IDC_RK} for general InDC-IRK methods   and  estimate the remainder {of the expansion (\ref{EstRem})} in sections {\ref{mainT} and \ref{SectRem}} in the Appendix.

\subsection{Numerical evidence}
We present some numerical evidence of  the estimates given in Theorem~\ref{thm: IDC_BE} and Theorem~\ref{thm: IDC_RK}. 
Below, we consider the following InDC methods constructed with $M$ quadrature points. 
\bit
\item The InDC-BE method with $k$ correction steps (InDC-BE-M-k).
The BE method has order {$p=1$} and stage order {$q=1$}.
\item
The InDC method constructed with a second order stiffly accurate DIRK method in
$k$ correction steps (InDC-DIRK2-SA-M-k). The second order DIRK method ({\em DIRK2-SA}) has the Butcher tableau 
\beq
\begin{array}{c|cc}
\gamma & \gamma &0\\
1 & 1-\gamma & \gamma\\
\hline
 & 1-\gamma & \gamma 
 \end{array} \ \ \ \ \  
\eeq
where $\gamma = 1-\frac{\sqrt{2}}{2}$. This method is stiffly accurate with order {$p=2$} and stage order {$q=1$}.
\item The InDC method constructed with a second order non stiffly accurate midpoint method in
$k$ correction steps (InDC-DIRK2-NSA-M-k). 
The second order midpoint method ({\em DIRK2-NSA}) has the Butcher tableau 
\beq
\begin{array}{c|c}
1/2 & 1/2\\
\hline
 & 1
 \end{array} \ \ \ \ \ 
 .
\eeq
This method is not stiffly accurate, with order {$p=2$} and stage order {$q=1$}.

\item 
The InDC method constructed with a second order stiffly accurate Lobatto IIIA method (trapezoidal rule) in
$k$ correction steps (InDC-LobattoIIIA2-M-k). 
This method has the Butcher tableau
\beq
\begin{array}{c|cc}
0&0 &0\\
1& 1/2 & 1/2\\
\hline
 & 1/2 &1/2
 \end{array} \ \ \ \ \ .
\eeq
It is stiffly accurate with matrix $A$ singular. It is $A$-stable but not $L$-stable ($R(\infty) \neq 0$), order {$p=2$} and stage order {$q=1$}.
\item The InDC method constructed with a third order stiffly accurate Radau IIA method in the prediction step and with the BE method in 
$k$ correction steps (InDC-Radau-BE-M-k).
The third order Radau IIA method has the Butcher tableau
\beq
\begin{array}{c|cc}
1/3 & 5/12 &-1/12\\
1 & 3/4 & 1/4\\
\hline
 & 3/4 & 1/4\\
 \end{array} \ \ \ \ \ .
\eeq
This method is stiffly accurate with order {$p=3$} and stage order {$q=2$}.
\eit
The indicated order of convergence by Theorem~\ref{thm: IDC_BE} and Theorem~\ref{thm: IDC_RK} for the $y$ and $z$ components in the SPPs are summarized in Table~\ref{tab: error}. Below we discuss the convergence rates specified in Table \ref{tab: error}.
\bit
\item 
For the InDC-BE-M-k method, the order of convergence will increase with $k$ for the first error term in equation \eqref{eq: eps_error} when $\eps \ll H$ and  $k\le M-1$, leading to a term of $H^{\min(k+1, M)}$ for the differential and algebraic component in \eqref{eq: final_estimate1}.
{The BE method has stage order $q = 1$}. The order of convergence for the second error term in equation \eqref{eq: eps_error}
will be determined by the stage order of the prediction $q^{(0)} = 1$ when $k$ increases, leading to a term of $\eps H$ in equation \eqref{eq: final_estimate1}.
\item 
For the InDC-DIRK2-SA-M-k, the order of convergence will increase with $k$ by $2$ for the first error term in equation \eqref{eq: eps_error} when $\eps \ll H$ and  $k\le M-1$, leading to a term of $H^{\min(2(k+1), M)}$ for the differential and algebraic component in equation \eqref{eq: final_estimate1}. DIRK2-SA method has stage order $q = 1$. The order of convergence for the second error term in equation \eqref{eq: eps_error} will be determined by the stage order of the prediction $q^{(0)} = 1$ when $k$ increases, leading to a term of $\eps H$ in equation \eqref{eq: final_estimate1}.
 
\item 
An important ingredient, suggested by the analysis, is to require that the methods to be stiffly accurate, i.e. $a_{sj} = b_j$ for $j = 1, \cdots , s$ and that the matrix $A$ is nonsingular. Such a choice provides a significant benefit for the convergence of the numerical solution, without which the numerical solutions will diverge. For example, if we consider using the second order non stiffly accurate DIRK method in both the prediction and $k$ correction steps of an InDC framework with $M$ quadrature points (InDC-DIRK2-NSA-M-k), divergence results are expected (see Figure \ref{fig2.2}). Note that in the analysis for InDC-IRK method in the appendix, a satisfactory theoretical explanation of this fact is given. {Finally, if we consider using methods with singular matrix $A$, as for example the second order Lobatto IIIA method, in both the prediction and $k$ correction steps of an InDC framework with $M$ quadrature points (InDC-LobattoIIIA2-M-k, right plot in Figure \ref{fig2.2}), again divergence is expected.}

\item
For the InDC-Radau-BE-M-k, the order of convergence will increase 
with $k$ by $1$ for the first error term in equation \eqref{eq: eps_error} when $\eps \ll H$ and  $k\le M-1$, leading to a term of $H^{\min(3+k, M)}$ for the differential and algebraic component in equation \eqref{eq: final_estimate1}.
Radau IIA method has stage order $q = 2$. The order of convergence for the second error term in equation \eqref{eq: eps_error} will be determined by the stage order of the prediction $q^{(0)} = 2$ when $k$ increases, leading to a term of $\eps H^2$ in equation \eqref{eq: final_estimate1}.
\eit

\begin{table}[htb]
\begin{center}
\caption{{
Global error predicted by Theorem~\ref{thm: IDC_BE} and Theorem~\ref{thm: IDC_RK} with $H \gg \eps$. Note that `SA'/`NSA' means stiffly accurate/not stiffly accurate.
}\label{tab: error}
}
\bigskip
\begin{tabular}{|c | c|c|}
\hline
\cline{1-3} Method & $y-$comp &$z-$comp \\
\hline
\cline{1-3}  InDC-BE-M-k & $H^{\min(k+1,M)} + \eps H$ & $H^{\min(k+1,M)} + \eps H$ \\
\hline
\cline{1-3}   InDC-DIRK2-SA-M-k& $H^{\min(2(k+1),M)} + \eps H$ & $H^{\min(2(k+1),M)} + \eps H$ \\
\hline
\cline{1-3}  InDC-DIRK2-NSA-M-k  & diverges  & diverges\\
\hline
\cline{1-3}  InDC-LobattoIIIA2-SA-M-k  & diverges  & diverges\\
\hline
\cline{1-3}  InDC-Radau-BE-M-k  & $H^{\min(3+k,M)} + \eps H^2$ & $H^{\min(3+k,M)} + \eps H^2$\\
\hline
\end{tabular}
\end{center}
\end{table}

For numerical verification, we first consider a scalar example \cite{hairer1993solving2}
\beq
\eps z' =  - z + \cos(t) 
\eeq
with the analytical solution
\[
z(t) = \frac{\cos(t) + \eps \sin(t)}{1+\eps^2} + C\exp(-t/\eps),
\]
where $C = z(0) - 1$ is determined by the initial condition. For a consistent initial condition, let $C=0$. This is a good example to investigate the order of convergence for the $\eps^1$ term in equation~\eqref{errorExp}, as the error for $\eps^0$ is $0$. Indeed, for stiff parameter $\eps = 10^{-6}$ only a region of first order convergence is observed for the BE method, where the global and local error given for the $z$-component is $\mathcal{O}(\eps H)$ (see Corollary 3.10 in \cite{hairer1993solving2}). Figure~\ref{fig1} gives the one step error (local error) and global error of  BE method, expected $\mathcal{O}(\eps H)$ is observed. We also test the InDC-DIRK2-NSA-3-1 and InDC-LobattoIIIA2-SA-4-2 method.
Numerical results are presented in Figure~\ref{fig2.2}. Divergence results are observed when time step is large compared to $\eps$ if an InDC-correction is performed.

\begin{figure}[htb]
\begin{center}
\includegraphics[width=2.5in]{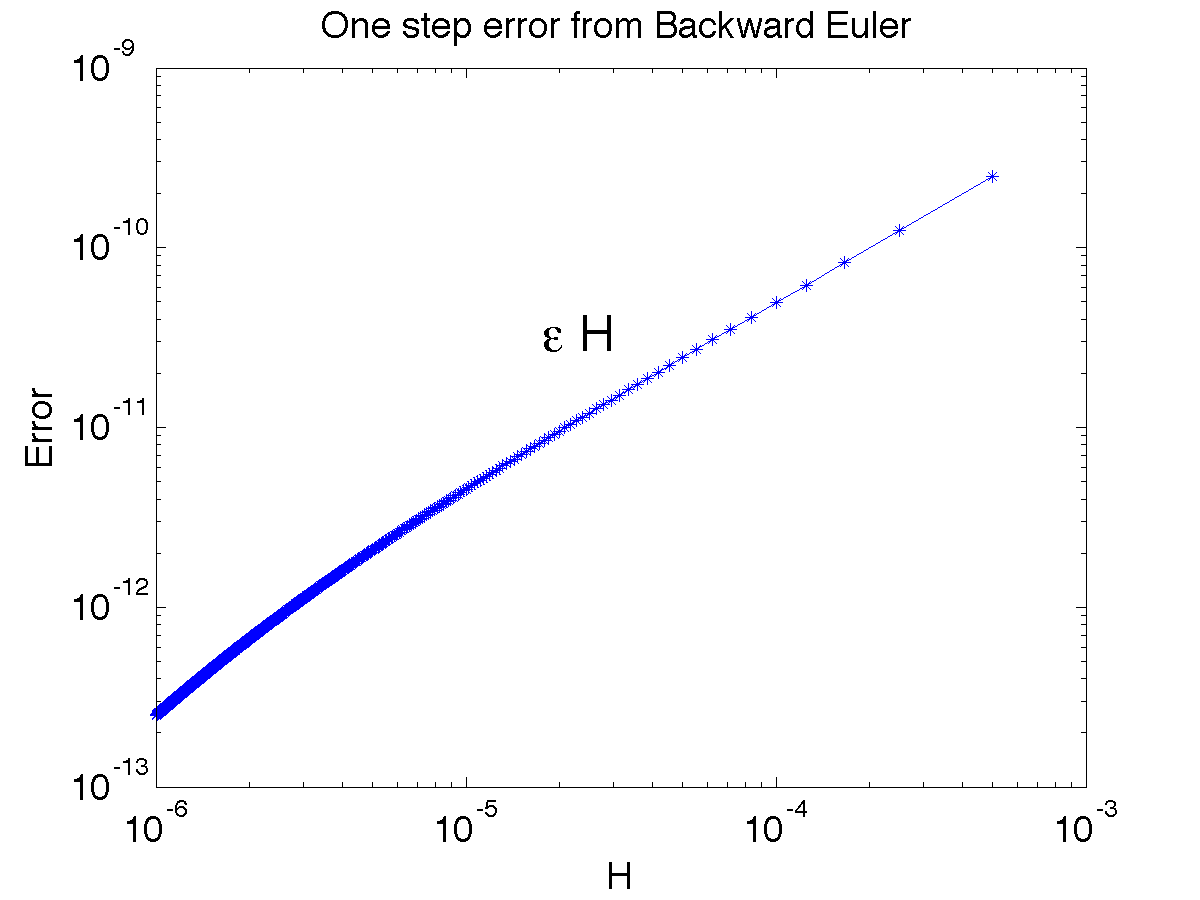}
\includegraphics[width=2.5in]{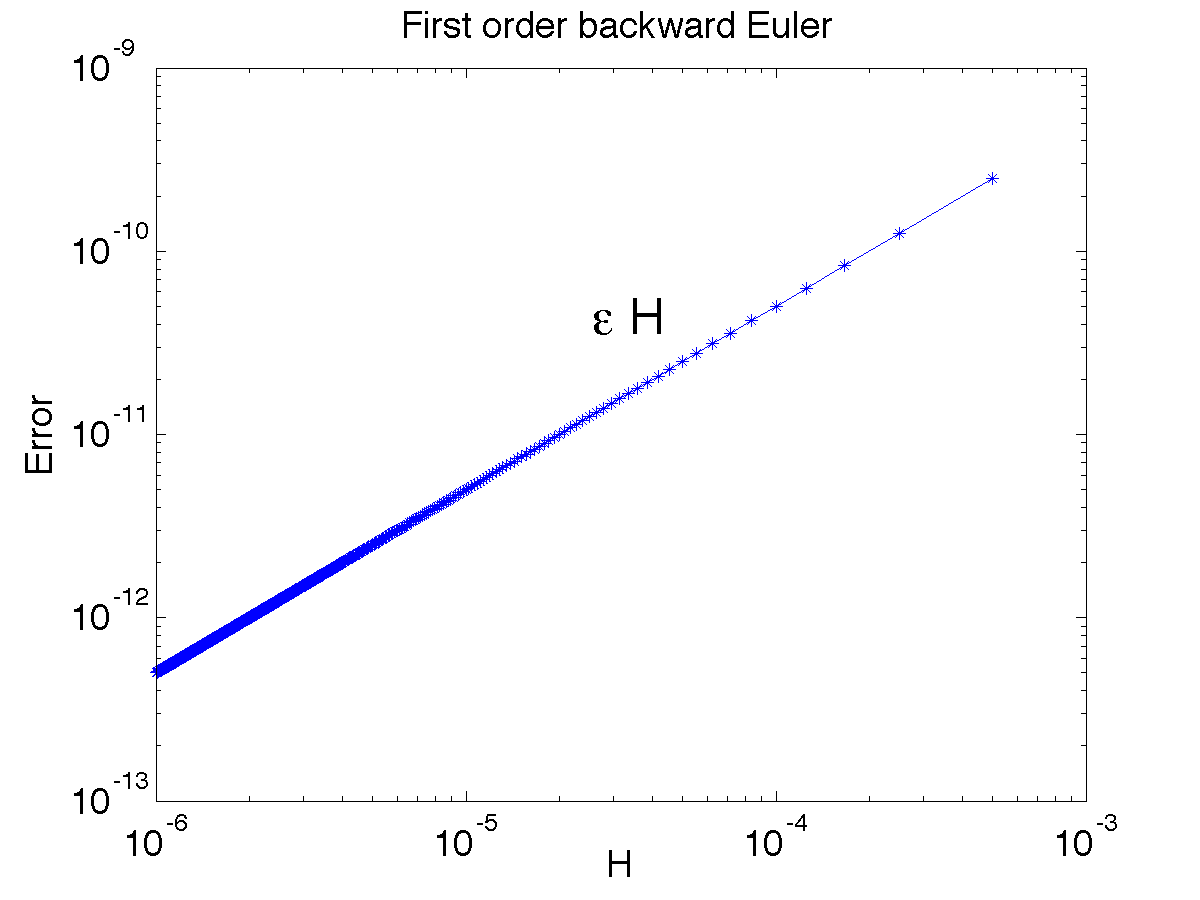}
\end{center}
\caption{Scalar example.
Local, i.e. one step error (left plot) and global error at $T = 0.5$ (right plot) of  BE method. $\mathcal{O}(\epsilon H)$ is observed in both plots with $\varepsilon = 10^{-6}$.
}
\label{fig1}
\end{figure}

\begin{figure}
\centering
\includegraphics[width=2.5in]{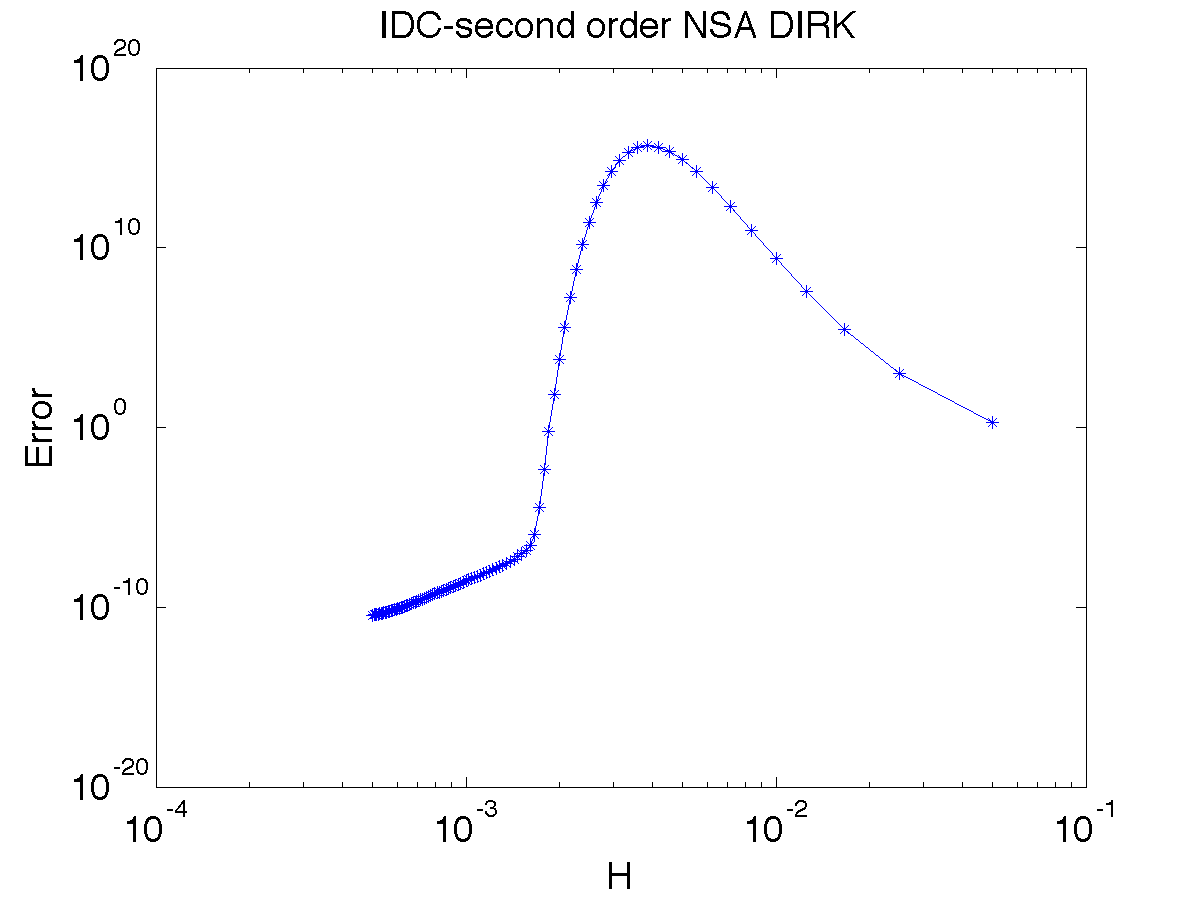}
\includegraphics[width=2.5in]{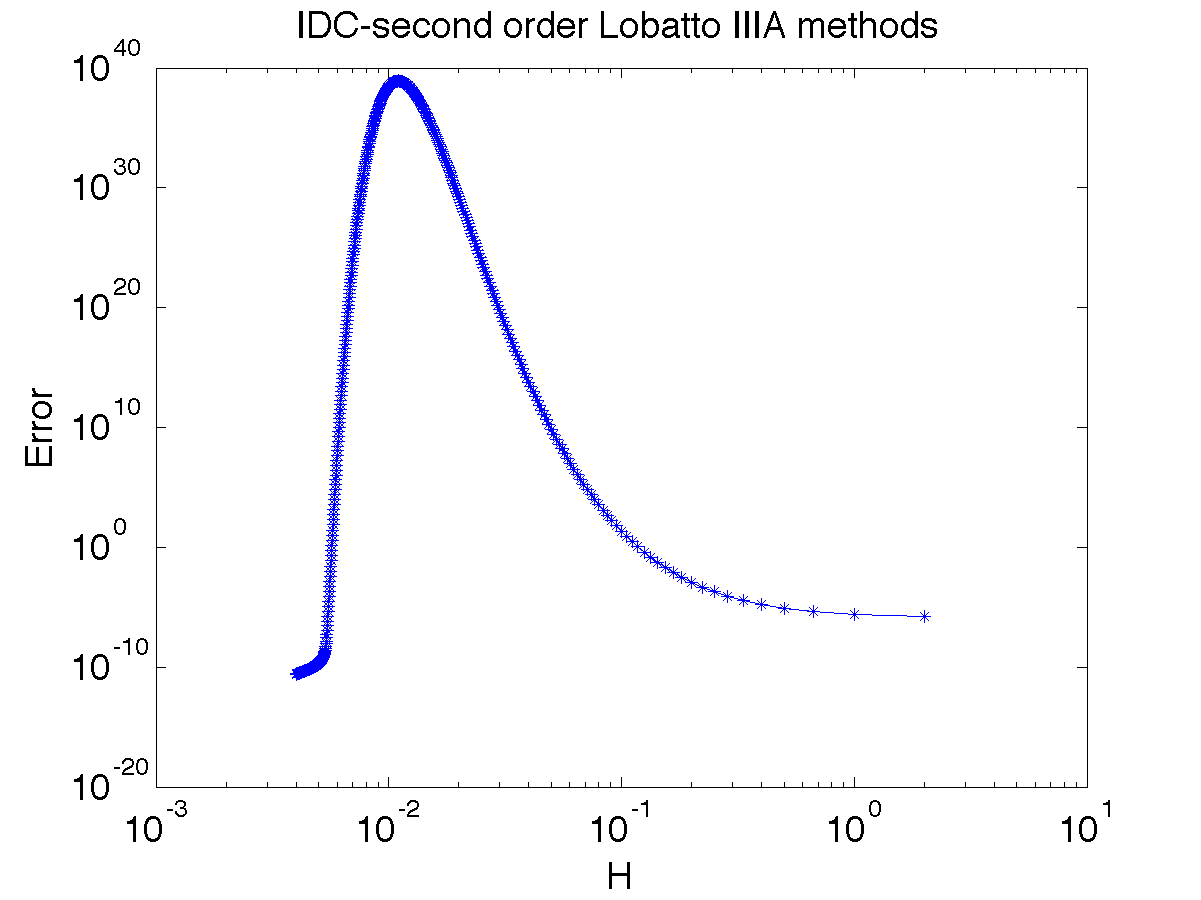}
\caption{Scalar example. $\eps = 10^{-4}$.
Left: global error ($T = 0.1$) of the InDC-second order DIRK method that is not SA with three quadrature points and one correction step.
{Right: global error ($T = 0.1$) of the InDC-second order Lobatto IIIA method with matrix $A$ singular, four quadrature points and one correction step.} 
}
\label{fig2.2}
\end{figure}

Now we consider the van der Pol equation \cite{hairer1993solving2} with the well-prepared initial data up to $\mathcal{O}(\eps^3)$
\beq
\left\{
\begin{array}{l}
y' = z \\
\eps z' = (1-y^2) z - y
\end{array}
\right.,
\quad
\left\{
\begin{array}{l}
y(0) = 2 \\
z(0) = -\frac23 + \frac{10}{81}\eps -\frac{292}{2187} \eps^2
\end{array}
\right.
\eeq
\bit
\item
Numerical results of the InDC-BE-3-2 method  are presented in the upper row of Figure~\ref{fig6}. The order of convergence for $\eps^0$ term would increase with the correction loops. The $\eps^1$ term of error behaves like $\mathcal{O}(\eps H)$ for both $y$ and $z$ components. 
\item The numerical results of the InDC-DIRK2-SA-4-1 method are presented in the middle row of Figure~\ref{fig6}. The order of convergence for $\eps^0$ term would increase with second order with the correction loop. The $\eps^1$ term of error behaves like $\mathcal{O}(\eps H)$ for both $y$ and $z$ components. 
\item The numerical results of the InDC-Radau-BE-6-2 method are presented in the botton row of Figure~\ref{fig6}. The order of convergence for $\eps^0$ term would increase with first order correction loop and is observed to be $\mathcal{O}(H^5)$. The $\eps^1$ term of error behaves like $\mathcal{O}(\eps H^2)$ for both $y$ and $z$ components. 
\eit

Numerical observations in Figure~\ref{fig6} are consistent with Theorem~\ref{thm: IDC_BE}, ~\ref{thm: IDC_RK} and Table~\ref{tab: error}. Especially, it is observed that the InDC SA-IRK methods exhibit order reduction both in differential and algebraic components. They produce an estimate for the $y$ and $z$ component in the form of equation \eqref{final_estimate}. 
For example, in Figure ~\ref{fig6}, we observe a behavior like $e^{(k)}_n = \mathcal{O}(H^{3}) +  \mathcal{O}(\eps H)$. Furthermore, if the step size $H > \eps^{\frac{1}{s_k-q^{(0)}}}$, $\mathcal{O}(H^{s_k})$ is dominant, otherwise the term $ \mathcal{O}(\eps H^{q^{(0)}})$ is observed.
We observe that in the neighborhood of $H \approx \eps^{\frac{1}{s_k-q^{(0)}}}$, we have a cancellation of error terms between $\mathcal{O}(H^{s_k})$
and $\eps \mathcal{O}(H^{q^{(0)}})$, if error constants are of opposite signs, see for example the plots in middle and bottom rows of Figure~\ref{fig6}.

\begin{figure}
\centering
\includegraphics[width=2.5in]{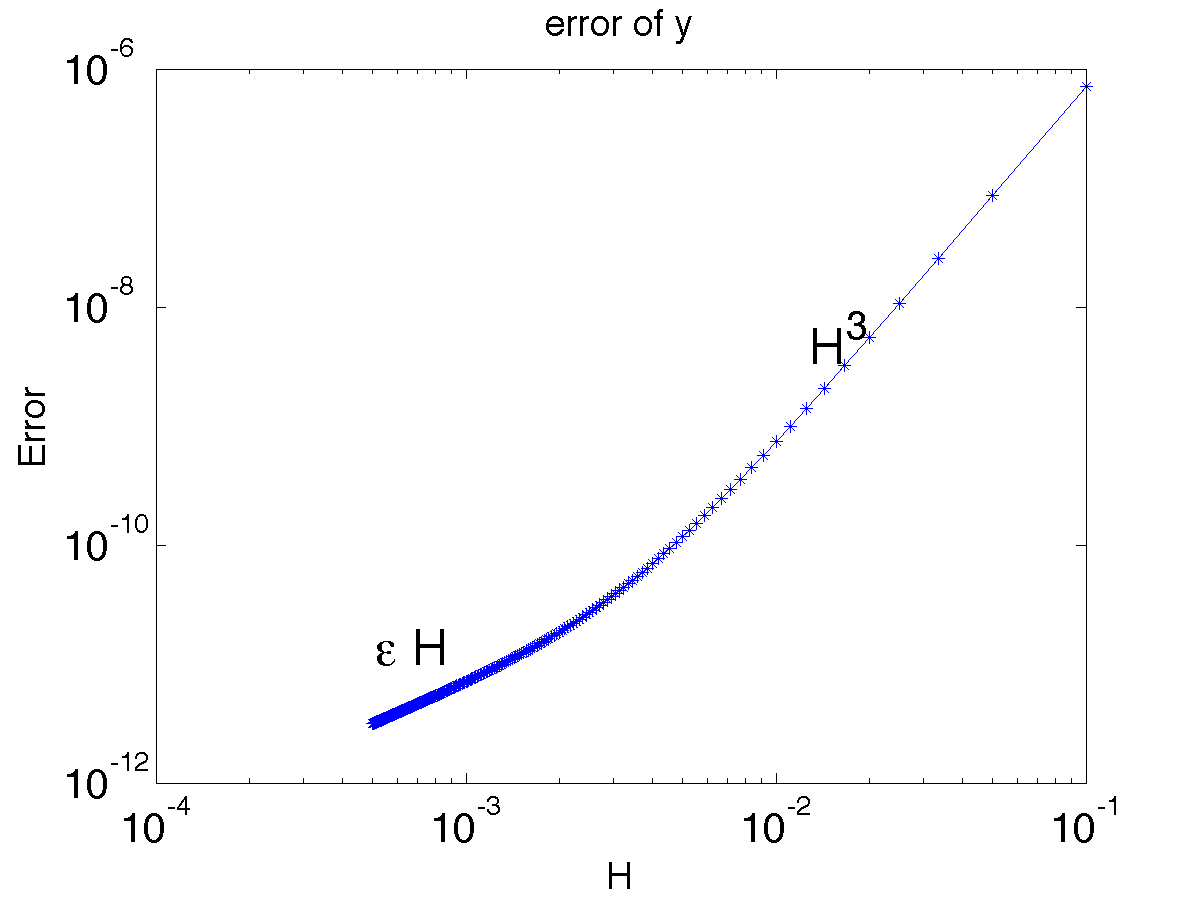},
\includegraphics[width=2.5in]{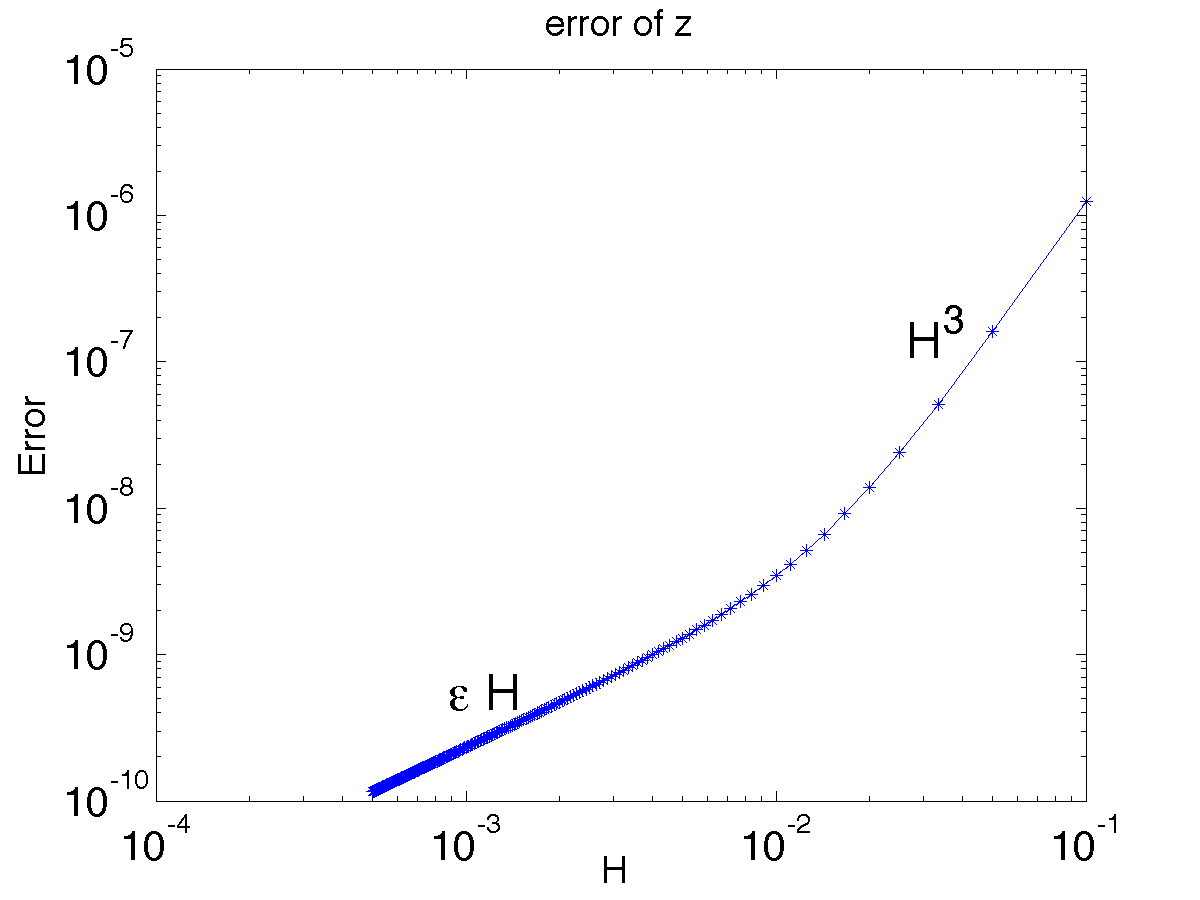}
\centering
\includegraphics[width=2.5in]{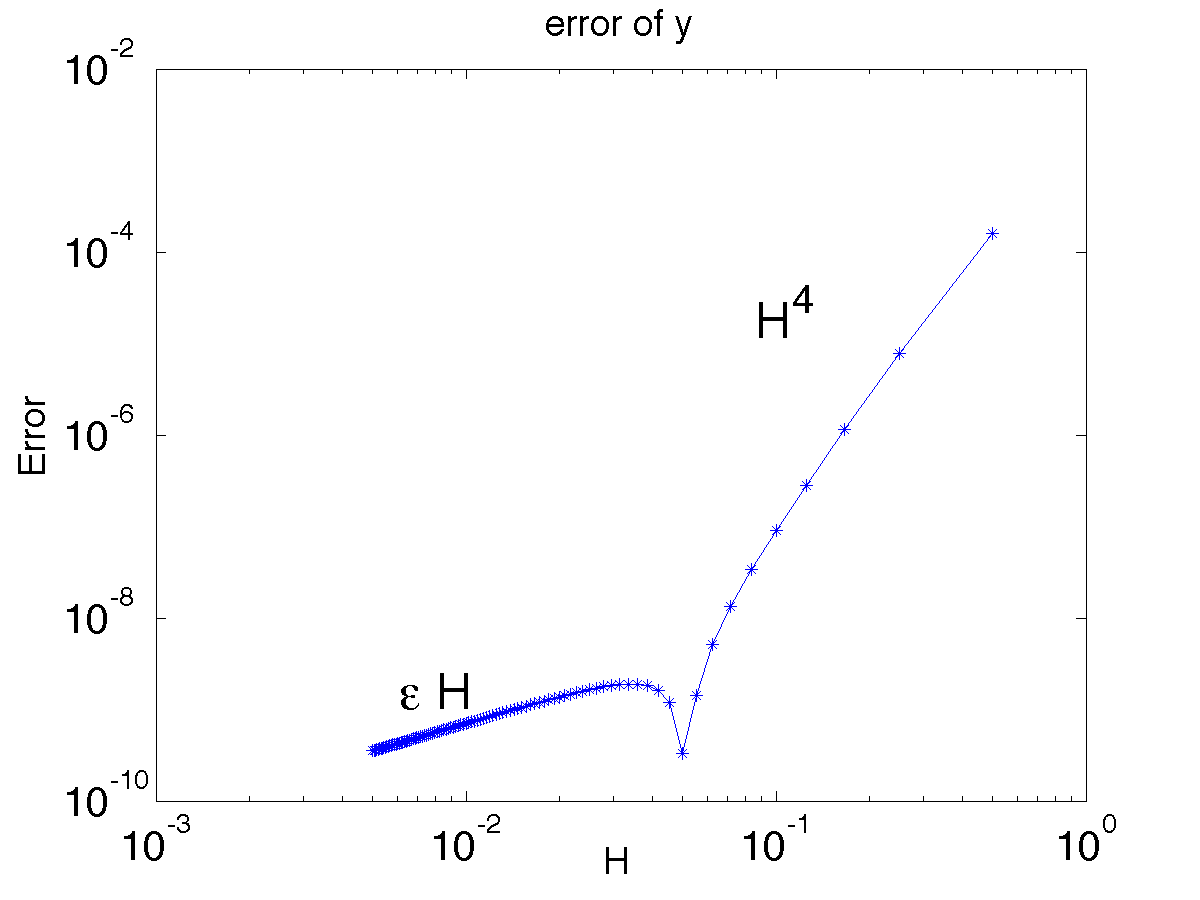},
\includegraphics[width=2.5in]{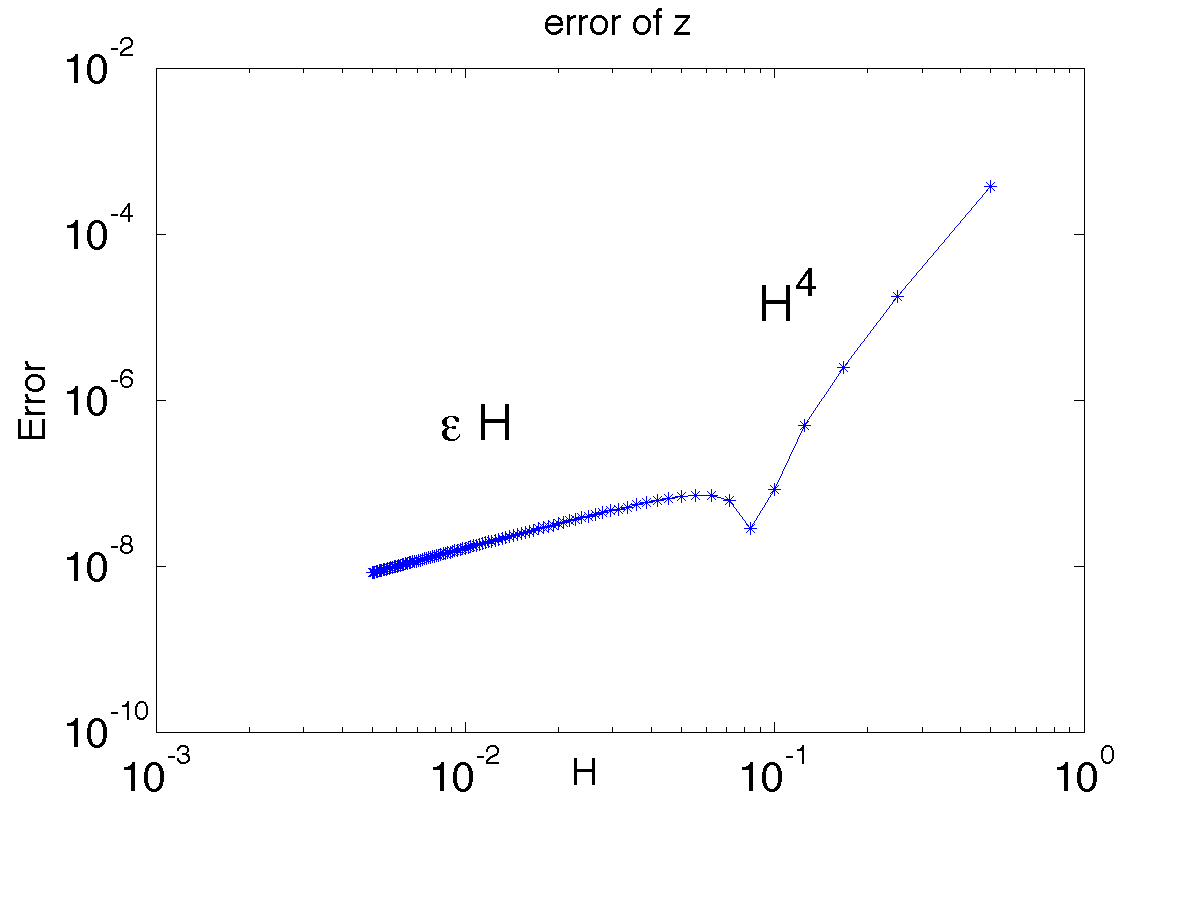}
\centering
\includegraphics[width=2.5in]{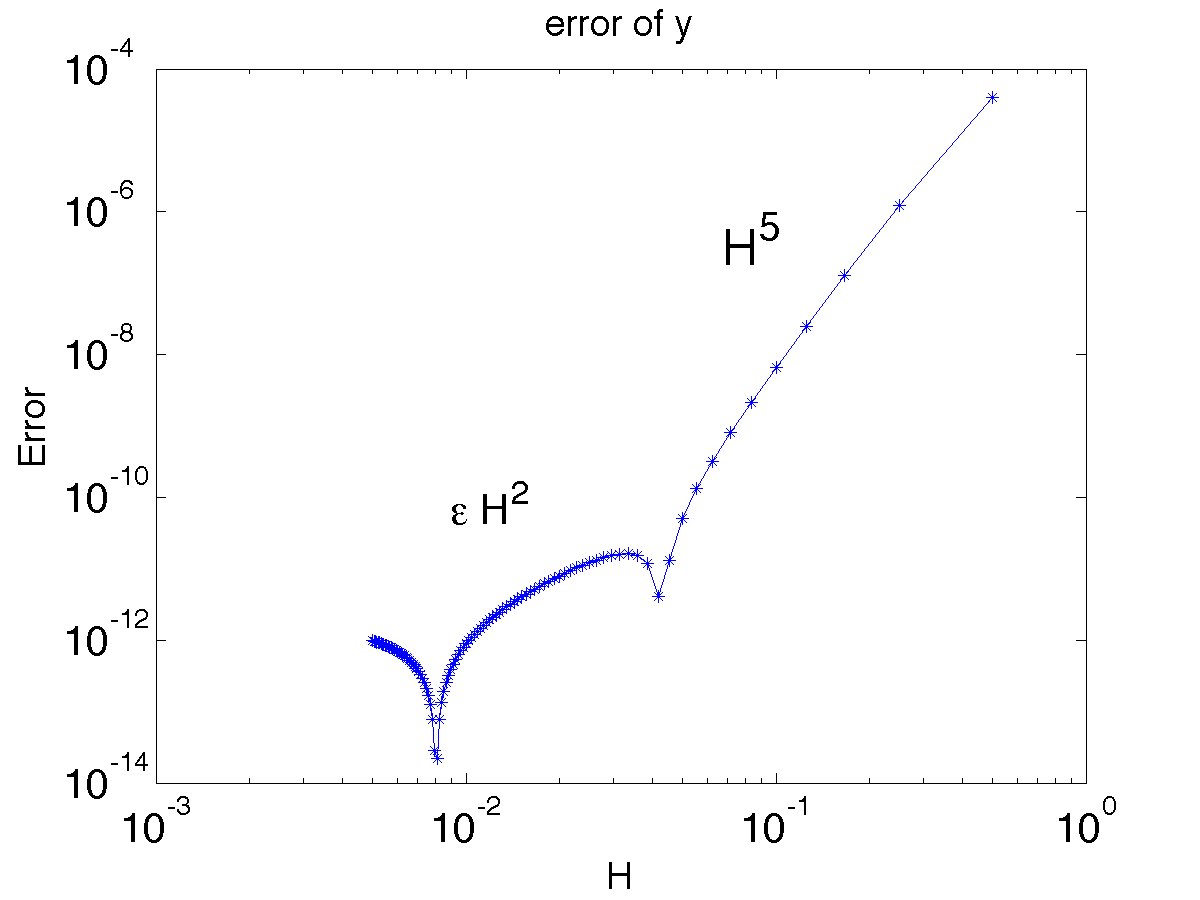},
\includegraphics[width=2.5in]{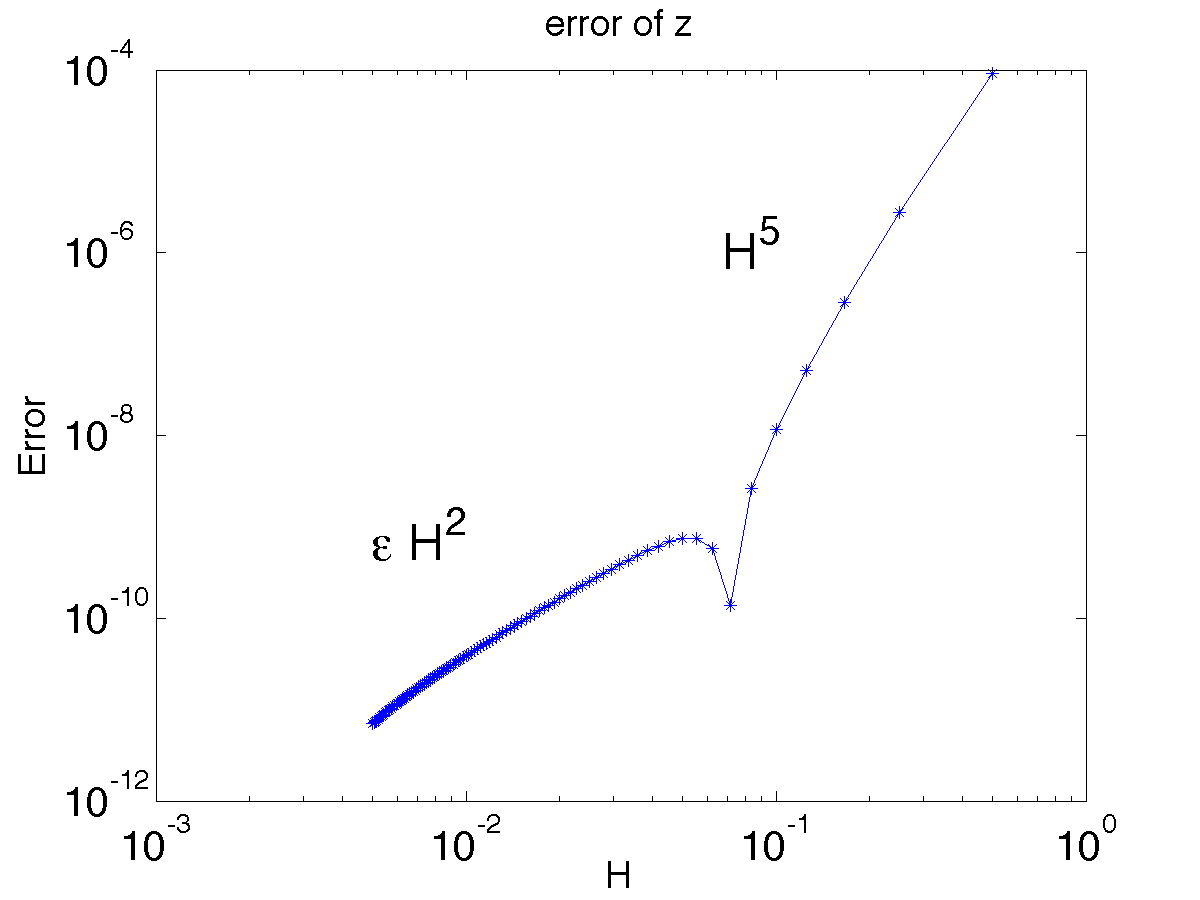}
\caption{
Van der Pol equation.
Global error ($T = 0.5$) of the InDC-BE-3-2 method (upper row); 
and of the InDC-DIRK2-SA-4-1 method  (middle row);
and of the InDC-Radau-BE-6-2 method  (bottom row).
$\eps = 10^{-6}$.
}
\label{fig6}
\end{figure}

\section{Proofs of main results}
\label{sec4}
\setcounter{equation}{0}
\setcounter{figure}{0}
\setcounter{table}{0}
In this section, we prove Theorem~\ref{thm: IDC_BE}, which is a special case of Theorem~\ref{thm: IDC_RK}. By proving Theorem~\ref{thm: IDC_BE} through several lemmas, we demonstrate the basic ingredients of the general proof for Theorem~\ref{thm: IDC_RK} presented in the Appendix. 
Our error estimates are based on the $\eps$-expansion outlined in Section~\ref{sec: eps-exp}.

\subsection{Error estimates for Theorem \ref{thm: IDC_BE}.}

We perform local error estimate for Theorem \ref{thm: IDC_BE} by two Lemmas. We again note that since $h = \frac{H}{M}$, we use $\mathcal{O}(h^p)$ and $\mathcal{O}(H^p)$ interchangeably below in our proof. We then prove the global error estimate based on the two Lemmas.

\begin{lem}
\label{lemma1}
($\eps^0$ error term). 
Let us assume that the reduce system (\ref{reduced}) with $\varepsilon=0$ satisfies {\eqref{redg}} and that the initial values are consistent. 
Consider the InDC-BE method constructed with $M$ uniformly distributed quadrature nodes excluding the left-most point and $k$ correction steps, i.e. \eqref{eq:Euler-0} for the prediction step and \eqref{eq:Euler-0-k} for the correction one, with $k=1, \cdots, K$.
Then the numerical solutions satisfy the following local error estimates
at each interior node of InDC $\tau_m$ with $m = 0,...,M$,
\begin{eqnarray}
\label{eq: lemma2_local}
\begin{array}{l}
e^{(k)}_{m,0} = y_{m,0} - \hat{y}^{(k)}_{m,0} = \mathcal{O}(h^{\min(k+2,M+1)}), 
\quad
d^{(k)}_{m,0} = z_{m,0} - \hat{z}^{(k)}_{m,0} = \mathcal{O}(h^{\min(k+2,M+1)}),
\end{array}
\end{eqnarray} 
with
\beq
\label{eq: lemma2_g1}
g(\hat{y}^{(k)}_{m, 0}, \hat{z}^{(k)}_{m, 0}) = 0,
\eeq
for $k=0, \cdots, K$.
\end{lem}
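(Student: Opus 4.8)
The plan is to reduce the $\eps^0$ component of the InDC-BE scheme to the backward Euler InDC method applied to the \emph{reduced} ODE \eqref{eqy}, and then to invoke the classical InDC convergence theory for ordinary differential equations. Two ingredients make this reduction legitimate: (i) the numerical $\eps^0$ solution lies exactly on the constraint manifold, which is the auxiliary claim \eqref{eq: lemma2_g1}, so that $\hat z^{(k)}_{m,0}=\mathcal G(\hat y^{(k)}_{m,0})$; and (ii) the $z$-error is then controlled by the $y$-error through the Lipschitz map $\mathcal G$. I would first prove \eqref{eq: lemma2_g1} by induction on the correction index $k$, then establish the $y$-error bound via the reduced ODE, and finally transfer it to the $z$-component.

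For the constraint \eqref{eq: lemma2_g1} the base case $k=0$ is immediate, since the second line of \eqref{eq:Euler-0} reads $0=g(\hat y^{(0)}_{m+1,0},\hat z^{(0)}_{m+1,0})$. For the inductive step, suppose $g(\hat y^{(k-1)}_{j,0},\hat z^{(k-1)}_{j,0})=0$ at every node $j$. Then the quadrature term vanishes, $S^m(\bar{\hat g}^{(k-1)}_0)=\sum_j S^{m,j}\,g(\hat y^{(k-1)}_{j,0},\hat z^{(k-1)}_{j,0})=0$, so the algebraic line of \eqref{eq:Euler-0-k} collapses to $0=h\,g(\hat y^{(k)}_{m+1,0},\hat z^{(k)}_{m+1,0})$, which gives \eqref{eq: lemma2_g1} at level $k$. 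Under \eqref{redg}, together with a standard continuity argument (for $h\le h_0$) that keeps the iterates in the $\eps$-independent neighbourhood of the solution, this forces $\hat z^{(k)}_{m,0}=\mathcal G(\hat y^{(k)}_{m,0})$, the same branch realized by the exact reduced solution $z_0=\mathcal G(y_0)$.

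With $\hat z^{(k)}_{m,0}=\mathcal G(\hat y^{(k)}_{m,0})$ in hand, every occurrence of $f$ in the $\eps^0$ scheme becomes $F(\hat y^{(k)}_{m,0})$ with $F(y):=f(y,\mathcal G(y))$, and all $g$-quadrature terms drop out. Hence \eqref{eq:Euler-0} reduces to backward Euler for $y'=F(y)$, and the $y$-line of \eqref{eq:Euler-0-k} reduces \emph{exactly} to the InDC-BE correction for $y'=F(y)$, whose exact solution is $y_0$. I would then appeal to the classical InDC order-increase result \cite{christlieb2009integral}: the prediction has local error $\mathcal O(h^2)$, and each correction sweep raises the local order by one (the order of the BE corrector), capped by the accuracy of the integration matrix $S$. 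Because the nodes are uniform but exclude the left endpoint, the quadrature is locally $\mathcal O(h^{M+1})$ accurate (as recorded below \eqref{eq: Sm}), which produces saturation at $M+1$ and yields $e^{(k)}_{m,0}=\mathcal O(h^{\min(k+2,M+1)})$. The $z$-estimate then follows from $d^{(k)}_{m,0}=\mathcal G(y_{m,0})-\mathcal G(\hat y^{(k)}_{m,0})$ and the Lipschitz bound on $\mathcal G$, giving the same order.

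The main obstacle is the per-correction order increase for $y'=F(y)$, which is where the genuine analytic work lives: one must control the backward Euler discretization of the \emph{error equation} together with the interpolation/quadrature error of $S$ and show they combine to gain exactly one power of $h$ per sweep, without that gain being destroyed by the derivatives of the error function. The clean route is through the ``smoothness of the rescaled error vector'' established for uniform nodes in \cite{christlieb2009integral}, which guarantees that the degree-$(M-1)$ interpolant of $\bar{\hat e}^{(k-1)}$ approximates $e^{(k-1)}$ to one order beyond its size. By contrast, the DAE-specific step, namely verifying that the reduction to $y'=F(y)$ persists through every correction sweep (the iterates never leaving the region where $\mathcal G$ is single-valued), is the conceptually new but technically lighter ingredient, handled by the induction above plus the smallness assumption $h\le h_0$.
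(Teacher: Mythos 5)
Your proposal is correct and follows essentially the same route as the paper's proof: establish the constraint $g(\hat{y}^{(k)}_{m,0},\hat{z}^{(k)}_{m,0})=0$ by induction on $k$ (using that the quadrature term $S^m(\bar{\hat{g}}^{(k-1)}_0)$ vanishes), deduce $\hat{z}^{(k)}_{m,0}=\mathcal{G}(\hat{y}^{(k)}_{m,0})$ from \eqref{redg}, identify the $\eps^0$ scheme with InDC-BE applied to the reduced ODE \eqref{eqy}, invoke the classical order-increase result of \cite{christlieb2009integral}, and transfer the bound to the $z$-component via the Lipschitz continuity of $\mathcal{G}$. No substantive differences from the paper's argument.
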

\noindent
{\bf Proof.}
For $k = 0$, equation \eqref{eq: lemma2_g1} is a consequence of the consistency of the initial conditions for system  \eqref{eq: exact_eps_0}. Then we start to prove  the local error estimate \eqref{eq: lemma2_local} for the prediction step ($k=0$). For the exact solution, by equation \eqref{eq: exact_eps_0} and assumption \eqref{redg}, we have for the $y_0(t)$ component, equation (\ref{eqy}) and 
$g(y_0(t),z_0(t)) = 0$. By equation \eqref{redg}, it follows that $z_0(t) = \mathcal{G}(y_0(t))$. 
 
For the numerical solution, we have \eqref{eq:Euler-0}. By  
$
g( \hat{y}^{(0)}_{m,0}, \hat{z}^{(0)}_{m,0}) =0,
$
with $m = 0,...,M$, 
we get  $\hat{z}^{(0)}_{m,0} =\mathcal{G}(\hat{y}^{(0)}_{m,0})$
with $\hat{y}^{(0)}_{m,0}$ being numerical solution of the ordinary differential equation (\ref{eqy}). 
Then from classical error estimates for the BE method, we have for the local truncation error  $|\hat{y}^{(0)}_{m, 0} - y_{m, 0}|  \le C_m h^2$ with $m = 0,...,M$, for some constant $C_m$ independent of $H$. Therefore, $|\hat{y}^{(0)}_{m, 0} - y_{m, 0}| = \mathcal{O}(h^2)$ and 
by $\hat{z}^{(0)}_{m,0} =\mathcal{G}(\hat{y}^{(0)}_{m,0})$ and the Lipschitz condition of $\mathcal{G}$, it follows that $|\hat{z}^{(0)}_{m, 0} - z_{m, 0}| = \mathcal{O}(h^2)$ with $m = 0,...,M$. 

Now we prove  the local error estimate \eqref{eq: lemma2_local} and equation \eqref{eq: lemma2_g1} for the correction step $k=1$, assuming a fixed  $M\ge1$. By $g(\hat{y}^{(0)}_{m, 0},\hat{z}^{(0)}_{m, 0}) =0$ in the prediction step, from the second equation in \eqref{eq:Euler-0-k}, we obtain
$
g(\hat{y}^{(1)}_{m, 0},\hat{z}^{(1)}_{m, 0}) =0,
$
with $m = 0,...,M$, i.e. equation \eqref{eq: lemma2_g1} with $k = 1$. Then, from the condition (\ref{redg}) it follows  $\hat{z}^{(1)}_{m,0} = \mathcal{G}(y^{(1)}_{m, 0})$, and this gives from  \eqref{eq:Euler-0-k}
\begin{eqnarray}
\label{eq: lemma2_be}
\begin{array}{l}
\hat{y}^{(1)}_{m+1,0} = \hat{y}^{(1)}_{m,0} + h (\hat{f}(\hat{y}^{(1)}_{m+1,0})-
\hat{f}( \hat{y}^{(0)}_{m+1,0}))
 + h S^{m}(\bar{\hat{f}}^{(0)}_0), 
\end{array}
\end{eqnarray}
where $\hat{f}(\hat{y}^{(1)}_{m+1,0})  = f(\hat{y}^{(1)}_{m+1,0},\mathcal{G}(\hat{y}^{(1)}_{m+1,0}))$,  and
$
S^{m}(\bar{\hat{f}}^{(0)}_0) = S^{m}(\hat{f}(\bar{\hat{y}}^{(0)}_0, \mathcal{G}(\bar{\hat{y}}^{(0)}_0))).
$ 
The method \eqref{eq: lemma2_be} for updating $\hat{y}^{(1)}_{m,0}$ represents the first correction step of the InDC-BE method to solve the non-stiff ordinary differential equation \eqref{eqy}.
Therefore, from classical error estimates of InDC-BE method when applied to a non-stiff ordinary differential equation in \cite{christlieb2009integral}, we have $|y_{m,0} - \hat{y}^{(1)}_{m,0}| \le C_m h^3$ for some constant $C_m$ independent of $h$ with $h \le h_0$. Therefore $|y_{m,0} - \hat{y}^{(1)}_{m,0}| = \mathcal{O}(h^3)$ and
by $\hat{z}^{(1)}_{m,0} =  \mathcal{G}(\hat{y}^{(1)}_{m,0})$ and Lipschitz condition of $\mathcal{G}$, we get $|z_{m,0} - \hat{z}^{(1)}_{m,0}| = \mathcal{O}(h^3)$, $\forall m=1, \cdots M$ and $h \le h_0$. The estimate for general $k>1$ can be proved in a similar fashion and by mathematical induction with respect to $k$. 
$\Box$
\begin{lem}
\label{lemma3}
($\eps^1$ error term). Assume condition \eqref{eq: gz} holds and initial values of the differential algebraic system \eqref{eq: exact_eps_0}-\eqref{eq: exact_eps_1} are consistent. Consider the InDC-BE method constructed with $M$ uniformly distributed quadrature nodes excluding the left-most point, and with \eqref{eq:Euler-0}-\eqref{eq:Euler-1} for the prediction step and \eqref{eq:Euler-0-k}-\eqref{eq:Euler-1-k} for the correction step with $k=1, \cdots, K$ for solving the differential algebraic system \eqref{eq: exact_eps_0} -\eqref{eq: exact_eps_1}. 
Then the local error estimates of the InDC-BE method
\beq
\label{eq: L5}
e^{(k)}_{m,1} = y_{m,1} - \hat{y}^{(k)}_{m,1} = \mathcal{O}(h^2), \quad
d^{(k)}_{m,1} = z_{m,1} - \hat{z}^{(k)}_{m,1} = \mathcal{O}(h), 
\eeq
hold for $m=1, \cdots, M$ at the interior nodes of InDC, and for $k=0, \cdots, K$.
\end{lem}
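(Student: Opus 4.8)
The plan is to exploit the index-$2$ structure of the combined system \eqref{eq: exact_eps_0}--\eqref{eq: exact_eps_1} and to argue by induction on the correction index $k$, feeding in the $\eps^0$ estimates already supplied by Lemma~\ref{lemma1}. The governing observation is that in \eqref{eq: exact_eps_1} the variable $z_1$ is \emph{not} differential: since $z_0$ is fixed at the $\eps^0$ level, the relation $z_0' = g_y y_1 + g_z z_1 =: \mathbb{G}_1$ is an algebraic constraint which, by invertibility of $g_z$ in \eqref{redg}, yields $z_1 = g_z^{-1}(z_0' - g_y y_1)$; substituting this into $y_1' = f_y y_1 + f_z z_1$ shows $y_1$ solves a linear ODE with the known forcing $z_0'$. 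I would first record that the numerical $\eps^1$ equations inherit this splitting exactly: the second lines of \eqref{eq:Euler-1} and \eqref{eq:Euler-1-k} do not evolve $\hat z^{(k)}_{\cdot,1}$ but impose a (corrected) backward-difference relation on the already computed $\hat z^{(k)}_{\cdot,0}$, so that, after rearrangement, the quantity $\hat{\mathbb{G}}^{(k)}_{m+1,1} = g_y\hat y^{(k)}_{m+1,1} + g_z\hat z^{(k)}_{m+1,1}$ is determined from data of level $k-1$ and from $\hat z^{(k)}_{\cdot,0}$ alone, independently of the level-$k$ unknowns.

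The heart of the proof is the estimate $\hat{\mathbb{G}}^{(k)}_{m+1,1} = z_0'(\tau_{m+1}) + \mathcal{O}(h)$, valid for every $k$. Starting from exact data at $\tau_0$ and using Lemma~\ref{lemma1} to write $\hat z^{(k)}_{m,0} = z_0(\tau_m) + \mathcal{O}(h^{\min(k+2,M+1)})$, the backward difference $(\hat z^{(k)}_{m+1,0} - \hat z^{(k)}_{m,0})/h$ approximates $z_0'(\tau_{m+1})$ with the intrinsic defect $-\tfrac{h}{2}z_0''(\tau_{m+1}) + \mathcal{O}(h^2)$. The crux---and the main obstacle---is to show that the high-order quadrature correction $S^m(\bar{\hat{\mathbb{G}}}^{(k-1)}_1)$ cannot cancel this $\mathcal{O}(h)$ defect: by the inductive hypothesis $\hat{\mathbb{G}}^{(k-1)}_{\cdot,1} = z_0' + \mathcal{O}(h)$, so the quadrature reproduces $(z_0(\tau_{m+1}) - z_0(\tau_m))/h$ only to $\mathcal{O}(h)$, and the three contributions recombine to leave exactly $\hat{\mathbb{G}}^{(k)}_{m+1,1} = z_0'(\tau_{m+1}) + \mathcal{O}(h)$. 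This persistence of the $\mathcal{O}(h)$ error, neither improving under correction nor deteriorating, is the order-reduction phenomenon; the base case $k=0$ is immediate from \eqref{eq:Euler-1}, where no quadrature term is present and the bare backward difference already gives $\mathcal{O}(h)$.

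With $\hat{\mathbb{G}}^{(k)}_{m+1,1}$ in hand, I would close the two component estimates. Eliminating $\hat z^{(k)}_{m+1,1}$ through the constraint turns the first lines of \eqref{eq:Euler-1}/\eqref{eq:Euler-1-k} into a (corrected) backward-Euler step for the reduced linear ODE $y_1' = (f_y - f_z g_z^{-1} g_y) y_1 + f_z g_z^{-1} z_0'$, now with a forcing whose error is only $\mathcal{O}(h)$; since that error enters the increment multiplied by $h$, and the Jacobians evaluated at the numerical $\eps^0$ solution differ from the exact ones by $\mathcal{O}(h^{\min(k+2,M+1)})$ (again Lemma~\ref{lemma1}), a one-step comparison with $\int_{\tau_m}^{\tau_{m+1}}\mathbb{F}_1\,ds$ and a discrete estimate on $(1 - h f_y)$ give $e^{(k)}_{m,1} = \mathcal{O}(h^2)$, uniformly in $k$. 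Finally $z_1$ is recovered from $\hat z^{(k)}_{m+1,1} = g_z^{-1}(\hat{\mathbb{G}}^{(k)}_{m+1,1} - g_y\hat y^{(k)}_{m+1,1})$, whence $d^{(k)}_{m,1} = g_z^{-1}\big[(\mathbb{G}_1 - \hat{\mathbb{G}}^{(k)}_{m+1,1}) - g_y e^{(k)}_{m,1}\big] = \mathcal{O}(h)$. Induction on $k$ then completes the argument; I expect the only real labour to be the bookkeeping of the second paragraph showing that the correction leaves the $\mathcal{O}(h)$ error in $\hat{\mathbb{G}}^{(k)}$ invariant, everything else being routine Taylor expansion together with the invertibility in \eqref{redg}.
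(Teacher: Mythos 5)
Your argument is correct and in substance coincides with the paper's proof: both rest on the $\varepsilon^0$ estimates of Lemma~\ref{lemma1}, the observation that the quadrature of the approximate $\varepsilon^1$ right-hand sides is only $\mathcal{O}(h)$ accurate because of the prediction step's first-order $z_1$-error, elimination of the algebraic component through $g_z^{-1}$, and induction on $m$ and $k$, with the same conclusion that the correction loop cannot lift the $\mathcal{O}(h)$ bottleneck. The only difference is organizational: the paper subtracts the scheme from the integrated exact equations to get the coupled error recursion \eqref{eq: new_error_3_y}, extracts $d^{(1)}_{m+1,1}=-g_z^{-1}g_y\,e^{(1)}_{m+1,1}+\mathcal{O}(h)$ from its second line and decouples to reach \eqref{eq: new_error_4_y1}, whereas you package the same elimination as the invariant $\hat{\mathbb{G}}^{(k)}_{m+1,1}=z_0'(\tau_{m+1})+\mathcal{O}(h)$ and read the $y_1$-update as a perturbed corrected backward-Euler step for the reduced linear ODE.
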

\noindent
{\bf Proof.} The proof for the case of $k=0$ (prediction step) is a consequence of Lemma 4.4 in Chap. VII.4 in \cite{hairer1993solving2}. 
We then consider the first correction step with $k=1$ and assume a fixed  $M\ge1$.
We prove \eqref{eq: L5} by mathematical induction w.r.t. $m$. Especially, we know
$e^{(1)}_{m,1} = d^{(1)}_{m,1} = 0$, with $m=0$. We assume \eqref{eq: L5}
is valid for $0, \cdots, m$. We will prove that \eqref{eq: L5} is valid for $m+1$.
The integration of \eqref{eq: exact_eps_1} over
$[\tau_m, \tau_{m+1}]$ gives
\beq
\label{eq:exact-1}
\varepsilon^1: \quad
\left\{
\begin{array}{lll}
y_{m+1, 1} = y_{m, 1} + \int_{\tau_m}^{\tau_{m+1}} \mathbb{F}_1(\tau) d\tau,\\
z_{m+1, 0} = z_{m, 0} + \int_{\tau_m}^{\tau_{m+1}} \mathbb{G}_1(\tau) d\tau,\\
\end{array}
\right.
\eeq
with $\mathbb{F}_1$ and $\mathbb{G}_1$ defined in \eqref{eq: exact_eps_1}.
We consider now
\beq
\label{ed1}
e^{(1)}_{m+1,1} = y_{m+1,1}-\hat{y}^{(1)}_{m+1,1}, \quad d^{(1)}_{m+1,1} = z_{m+1,1}-\hat{z}^{(1)}_{m+1,1},
\eeq
i.e. the difference between the exact and numerical solution at $\tau_{m+1}$. From (\ref{eq: tempDF1}), as well as from the estimates \eqref{eq: lemma2_local} in Lemma~\ref{lemma1}, we have
\beqa \label{eq: Delta_F}
\Delta \hat{\mathbb{F}}^{(k-1)}_{m+1,1} =f_y \hat{e}^{(k-1)}_{m+1,1} + f_z 
\hat{d}^{(k-1)}_{m+1,1} + \mathcal{O}(h^{k+1}).
\eeqa
Here we used the abbreviations $ f_y = f_y (y_{m+1,0}, {z}_{m+1,0})$ and similarly for $f_z$. 
Equally, we have
\beq
\label{eq: Delta_G}
\Delta \hat{\mathbb{G}}^{(k-1)}_{m+1,1} 
=  g_y  \hat{e}^{(k-1)}_{m+1,1}   + g_z \hat{d}^{(k-1)}_{m+1,1} + \mathcal{O}(h^{k+1}). 
\eeq
Then from \eqref{eq: Delta_F} and \eqref{eq: Delta_G} for $k = 1$ it follows
\beq
\label{eq:new-error}
\left\{
\begin{array}{l}
\Delta \hat{\mathbb{F}}^{(0)}_{m+1,1}= \big( f_y\hat{e}^{(0)}_{m+1,1} + f_z\hat{d}^{(0)}_{m+1,1}) + \mathcal{O}(h^2),\\[3mm]
\Delta  \hat{\mathbb{G}}^{(0)}_{m+1,1} =
\big( g_y\hat{e}^{(0)}_{m+1,1} + 
g_z\hat{d}^{(0)}_{m+1,1}) + \mathcal{O}(h^2).
\end{array}
\right.
\eeq
Now subtracting equation \eqref{eq:Euler-1-k} from equation \eqref{eq:exact-1} this gives
\begin{eqnarray}
\label{eq:num-1}
\varepsilon^1: \quad 
\left\{
\begin{array}{l}
{e}^{(1)}_{m+1,1} = {e}^{(1)}_{m,1}  - h \Delta  \hat{\mathbb{F}}^{(0)}_{m+1,1}- h S^{m}(\bar{\hat{\mathbb{F}}}_1^{(0)})
+\int_{\tau_m}^{\tau_{m+1}} {\mathbb{F}}_1 (\tau)d\tau, \\[3mm]
{d}^{(1)}_{m+1,0} = {d}^{(1)}_{m,0} - h \Delta  \hat{\mathbb{G}}^{(0)}_{m+1,1}- h S^{m}(\bar{\hat{\mathbb{G}}}_1^{(0)})
+\int_{\tau_m}^{\tau_{m+1}} {\mathbb{G}}_1 (\tau) d\tau.
\end{array}
\right.
\end{eqnarray}
On the right-hand side of the equations in (\ref{eq:num-1}) we add and subtract the following quantities: $h S^{m}(\bar{\mathbb{F}}_1)$ and $h S^{m}(\bar{\mathbb{G}}_1)$,  these are the integrals of $(M-1)^{th}$ degree interpolating
polynomials on $(\tau_m, \mathbb{F}_1(\tau_m))_{m=1}^{M}$ and $(\tau_m, \mathbb{G}_1(\tau_m))_{m=1}^{M}$ over the subinterval $[\tau_m, \tau_{m+1}]$, hence they are accurate to the order $\mathcal{O}(h^{M+1})$ locally, i.e.  $\int_{\tau_m}^{\tau_{m+1}}   {\mathbb{F}}_1 (\tau) d\tau -h S^{m}( \bar{\mathbb{F}}_1) = \mathcal{O}(h^{M+1})$. By the  local error estimates in Lemma~\ref{lemma1}, as well as equation \eqref{eq: L5} for $k=0$, it follows that 
$S^{m}( \bar{\mathbb{F}}_1)-S^{m}(\bar{\hat{\mathbb{F}}}_1)$ and $S^{m}(\bar{\mathbb{G}}_1)-S^{m}(\bar{\hat{\mathbb{G}}}_1)$ are accurate to the order $\mathcal{O}(h)$. 
Thus, {from \eqref{eq:num-1} we get}
\begin{eqnarray}
\label{eq: new_error_2_y}
\left\{
\begin{array}{lll}
{e}^{(1)}_{m+1,1} &=& {e}^{(1)}_{m,1} -  h \left( f_y\hat{e}^{(0)}_{m+1,1} +  f_z\hat{d}^{(0)}_{m+1,1} \right)  + \mathcal{O}(h^2),\\[3mm]
\label{eq: new_error_2_z}
{d}^{(1)}_{m+1,0} &=& {d}^{(1)}_{m,0} - h \left( g_y\hat{e}^{(0)}_{m+1,1} +  g_z\hat{d}^{(0)}_{m+1,1} \right) 
+ \mathcal{O}(h^2).
\end{array}
\right.
\end{eqnarray}
Now from \eqref{eq: err_errhat} and \eqref{eq: lemma2_local}, we have
\begin{eqnarray}
\label{eq: eee}
\left\{
\begin{array}{l}
\hat{e}^{(0)}_{m,1} = \hat{y}^{(1)}_{m,1}-\hat{y}^{(0)}_{m,1} = e^{(0)}_{m,1}-e^{(1)}_{m,1} = -e^{(1)}_{m,1} + \mathcal{O}(h^2),\\[3mm]
\hat{d}^{(0)}_{m,1} = \hat{z}^{(1)}_{m,1}-\hat{z}^{(0)}_{m,1} = d^{(0)}_{m,1}-d^{(1)}_{m,1} = -d^{(1)}_{m,1} + \mathcal{O}(h),
\end{array}
\right.
\end{eqnarray}
and put it into equation \eqref{eq: new_error_2_y} gives,
\beq
\label{eq: new_error_3_y}
\left\{
\begin{array}{l}
{e}^{(1)}_{m+1,1} = {e}^{(1)}_{m,1} + h \left( f_y{e}^{(1)}_{m+1,1} +  f_z{d}^{(1)}_{m+1,1} \right) + \mathcal{O}(h^2),\\[3mm]
{d}^{(1)}_{m+1,0} = {d}^{(1)}_{m,0} + h \left( g_y{e}^{(1)}_{m+1,1} +  g_z{d}^{(1)}_{m+1,1} \right) + \mathcal{O}(h^2).
\end{array}
\right.
\eeq
Now using the estimate \eqref{eq: lemma2_local} about ${d}^{(1)}_{m,0}$, from the second equation in \eqref{eq: new_error_3_y} we obtain 
\beq
\label{eq: de1}
{d}^{(1)}_{m+1,1} =  -g^{-1}_z g_y{e}^{(1)}_{m+1,1}  + \mathcal{O}(h),
\eeq
with the invertibility of $g_z$. 
Inserting this into the first equation in \eqref{eq: new_error_3_y} gives
\begin{eqnarray}
\label{eq: new_error_4_y1}
e^{(1)}_{m+1,1} &=& (1- h (f_y - f_z g_z^{-1}g_y))^{-1}{e}^{(1)}_{m,1} + \mathcal{O}(h^2). 
\end{eqnarray}
Finally ${e}^{(1)}_{m+1,1} = \mathcal{O}(h^2)$  follows from \eqref{eq: new_error_4_y1}, and ${d}^{(1)}_{m+1,1} = \mathcal{O}(h)$ from \eqref{eq: de1}.
The estimate for general $k>1$ can be proved in a similar fashion and by mathematical induction with respect to $k$.
$\Box$

\begin{rem} \label{InDC_SA}
In \cite{christlieb2009comments}, the InDC method constructed with explicit RK methods in the prediction and correction steps has been reformulated as a high-order explicit RK method whose Butcher tableau is explicitly constructed. Similarly, the InDC-BE can be viewed as an IRK method with the corresponding Butcher tableau. Below we present the Butcher tableau for the InDC-BE method with one loop of correction step. 
This takes the form
\begin{align}\label{CBTab}
  \begin{array}{c|cc}
    \vec{c} & T & Z\\
    \vec{c} & P & T \\
    \hline
    & \vec{b}_1^T & \vec{b}_2^T 
  \end{array}
\end{align}
where 
$
  \vec{c} = \frac1M \left[1, \cdots, M \right]^T,
$
$Z$ is a $M\times M$ matrix of zeros,  
$T$ and $P$ are $M \times M$ matrices,
with
 \begin{align*}
   {T}= \frac1M \left[
  \begin{array}{ccccc}
     1 & 0 & 0&\ldots&0\\ 
     1 & 1 & 0& \ldots&0\\
    \vdots & \vdots & \ddots & \vdots & \vdots \\
     1 & 1 & 1 &\ldots&1
  \end{array}
  \right],
\end{align*}
\begin{align*}
  P = \left[
    \begin{array}{ccccc}
      (\tilde{S}_{11} - \frac1M) & \tilde{S}_{12} &  \ldots & \tilde{S}_{1,M-1} & \tilde{S}_{1,M}  \\
       (\tilde{S}_{21}- \frac1M) & (\tilde{S}_{22} - \frac1M) &  \ldots & \tilde{S}_{2,M-1} & \tilde{S}_{2,M} \\
      \vdots & \vdots & \ddots & \vdots & \vdots \\
       (\tilde{S}_{M,1} - \frac1M) & 
      (\tilde{S}_{M,2} -\frac1M) & \ldots 
      & (\tilde{S}_{M,M-1}-\frac1M) & (\tilde{S}_{M,M}-\frac1M)
    \end{array}\right],
\end{align*}
where the term
$\tilde{S}_{ij}=\int_{t_0}^{t_i} \alpha_j(s)ds$ with $\alpha_j(s)$ the Lagrangian basis functions for the node $\tau_j$, and the vector
$$
\vec{b}_1^T = \left(\left(\tilde{S}_{M,1} - \frac{1}{M}\right), \left(\tilde{S}_{M,2} - \frac1M\right), \cdots, \left(\tilde{S}_{M,M} - \frac1M)\right) \right), \quad \vec{b}_2^T = \frac{1}{M}(1,1,\cdots, 1).$$
\end{rem}
Now  from remark \ref{InDC_SA} the following proposition follows. 
\begin{prop} 
\label{prop: idc_be_sa}
The InDC-BE method with $K$ correction steps is an implicit stiffly accurate IRK method with an invertible matrix $A$ in the Butcher tableau \eqref{eq: B_table}. Especially when $K=1$,  we get
\begin{eqnarray}\label{matrixIDC_A}
A = 
\left( \begin{array}{cc}
T & Z\\
 P & T
 \end{array}\right).
 \end{eqnarray}
\end{prop}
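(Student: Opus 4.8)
The plan is to establish the three assertions — that InDC-BE with $K$ corrections is an IRK method, that its coefficient matrix $A$ is invertible, and that it is stiffly accurate — by first reading everything off the explicit tableau of Remark~\ref{InDC_SA} in the base case $K=1$, and then propagating the structure to general $K$ by induction on the number of correction loops. The reformulation of InDC as a single RK method is the construction of \cite{christlieb2009comments} adapted to the implicit backward Euler integrator; Remark~\ref{InDC_SA} already records the resulting $2M \times 2M$ tableau \eqref{CBTab} for one correction, so for $K=1$ the first claim is immediate and the matrix \eqref{matrixIDC_A} is exactly the $A$-block of that tableau.

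For invertibility in the base case, I would exploit the block-lower-triangular shape of $A$ in \eqref{matrixIDC_A}: since the upper-right block is the zero matrix $Z$, one has $\det A = (\det T)^2$. Writing $T = \tfrac1M L$ where $L$ is the $M\times M$ unit lower-triangular matrix of ones, we get $\det L = 1$ and hence $\det T = M^{-M}\neq 0$, so $A$ is invertible. Stiff accuracy in the base case is a direct verification of Definition~\ref{DefSA}: the last row of $A=[\,P\ \ T\,]$ is $\big((\tilde S_{M,1}-\tfrac1M),\dots,(\tilde S_{M,M}-\tfrac1M),\tfrac1M,\dots,\tfrac1M\big)$, which coincides term-by-term with $b^T=(\vec b_1^T,\vec b_2^T)$ as defined in Remark~\ref{InDC_SA}; thus $b^T=e_{2M}^T A$.

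To pass to general $K$, I would argue inductively that each additional correction loop appends one more block-row of $M$ internal stages. The backward Euler discretization of the substep in the correction update \eqref{errorEqs2} contributes precisely the lower-triangular block $T$ on the new diagonal position, exactly as the prediction step \eqref{eq: Euler-eps} does, while the coupling to the previous correction level, through the integration matrix $S^m$ and the $\Delta f$, $\Delta g$ terms, fills only strictly-lower block positions. Consequently the coefficient matrix for $K$ corrections is block lower triangular with $K+1$ diagonal blocks each equal to $T$, so $\det A=(\det T)^{K+1}=M^{-M(K+1)}\neq 0$ and $A$ is invertible. Stiff accuracy persists because the final update in the last loop uses exactly the last quadrature row, so the bottom block-row of $A$ reproduces $b^T$ as in the base case.

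The main obstacle is the general-$K$ structural claim: one must check carefully that every diagonal block remains exactly $T$ (so that the implicit within-level BE substep is never contaminated by inter-level coupling) and that the stiff-accuracy identity $b^T=e^T A$ propagates cleanly through the recursive update $\bar{\hat y}^{(k)}=\bar{\hat y}^{(k-1)}+\bar{\hat e}^{(k-1)}$. Once this block-lower-triangular form with diagonal $T$ is secured, both invertibility and stiff accuracy follow with no further computation.
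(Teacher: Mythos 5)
Your proposal is correct and follows essentially the same route as the paper, which simply asserts that the proposition "follows from Remark~\ref{InDC_SA}" without further detail: you read the claim off the explicit tableau \eqref{CBTab}, noting that $A$ is block lower triangular with diagonal blocks $T=\frac1M L$ (so $\det A=(\det T)^{K+1}=M^{-M(K+1)}\neq 0$) and that the last row of $A$ coincides with $(\vec b_1^T,\vec b_2^T)$, giving $b^T=e^T A$. The only added content is your induction on $K$ making the block-lower-triangular structure explicit for more than one correction loop, which is a welcome filling-in of a step the paper leaves entirely implicit.
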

\begin{rem}
\label{rem:bn}
In the estimates in Lemma \ref{lemma3}, we show that
there is no improvement for $e^{(k)}_{m, 1}$ and $d^{(k)}_{m, 1}$ as $k$ increases, see equation \eqref{eq: L5}. 
This is consistent with our numerical evidences presented in the previous section.
The reason is that {\em both} the local and global error for the $z$-component 
in the prediction and correction steps is of first order. 
This sets the bottleneck for the order increase 
in the second equation of \eqref{eq: new_error_2_y}.
\end{rem}
 
We are now in the position to prove Theorem~\ref{thm: IDC_BE} by the local error estimates of the two lemmas above.
\noindent
{\em Proof of Theorem~\ref{thm: IDC_BE}.} 
Our first step here is to estimate $e^{(K)}_{n,0}$ and $d^{(K)}_{n,0}$. For this, from Lemma~\ref{lemma1} we have after one step from $t_0$ to $t_1$, the local error estimate 
\beq
y_0(t_1) - \hat{y}^{(K)}_{M,0} = \mathcal{O}(H^{\min(K+2,{M+1})}),
\eeq
with $m = M$ and $\tau_M = t_1$ in equation \eqref{eq: lemma2_local}.
{In the estimate of the global error from local error,  we obtain} 
\[
e^{(K)}_{n,0} = y_0(nH) - \hat{y}^{(K)}_{n, 0} =\mathcal{O}(H^{\min(K+1,{M})}).
\]
It thus follows from (\ref{eq: lemma2_g1}), and by the Lipschitz condition of $\mathcal{G}$, that
\[
d^{(K)}_{n,0} = z_0(nH) - \hat{z}^{(K)}_{n, 0} =\mathcal{O}(H^{\min(K+1,{M})}).
\]
Now our next aim is to estimate $e^{(K)}_{n,1}$  and $d^{(K)}_{n,1} $. From Lemma~\ref{lemma3}, we have for the local error estimate
\beq
y_1(t_1) - \hat{y}^{(k)}_{M,1} = \mathcal{O}(H^2).
\eeq
By Lemma~\ref{lemma3}, the proof of the global error estimates for $y$ and $z$  is similar to that of Theorem 4.5 and 4.6 in Chap.~VII.4 of \cite{hairer1993solving2}. Thus we obtain
\[
e^{(K)}_{n,1} = y_1(nH) - \hat{y}^{(k)}_{n, 1} =\mathcal{O}(H), \quad
d^{(K)}_{n,1} = z_1(nH) - \hat{z}^{(k)}_{n, 1} =\mathcal{O}(H),
\]
which proves the statement.
$\Box$

\begin{rem}
Similar error estimates can be given for the InDC SA-IRK method. We present and prove these error estimates in Appendix. 
\end{rem}

\section{Stability properties}
\label{sec: stab}

One important aspect of stability of numerical integrators can be visualized by the stability region \cite{hairer1993solving2} in a complex plane around the origin. 
For implicit methods discussed in Section~\ref{sec3}, we plot their stability regions in Figure~\ref{fig_stab}. In these plots, the region outside the bounded domains are the stability regions. Note that in these methods, the left-most quadrature point is always excluded in the construction of the InDC method. 
The following observations can be made.

\begin{enumerate}
\item In general, as more InDC corrections are performed, the stability region shrinks.    
\item The InDC-BE method appears to be A-stable with $M$ quadrature nodes and with up to $M-1$ correction loops for $M=4$ and $M=6$. For the other $M$'s, it appears that such conclusion still holds in our tests for $M\le8$. Note that such methods can be viewed as diagonally implicit RK methods. 
\item The InDC-DIRK2-SA method appears to be $A(\alpha)$-stable with one and two correction loops, with the angle $\alpha$ decreasing as more correction loops are taken. 
\item Among three different implicit RK methods, the InDC method constructed with the BE method appears to have the largest stability region. 

\end{enumerate}

\begin{figure}[htb]
\begin{center}
\includegraphics[width=2.5in]{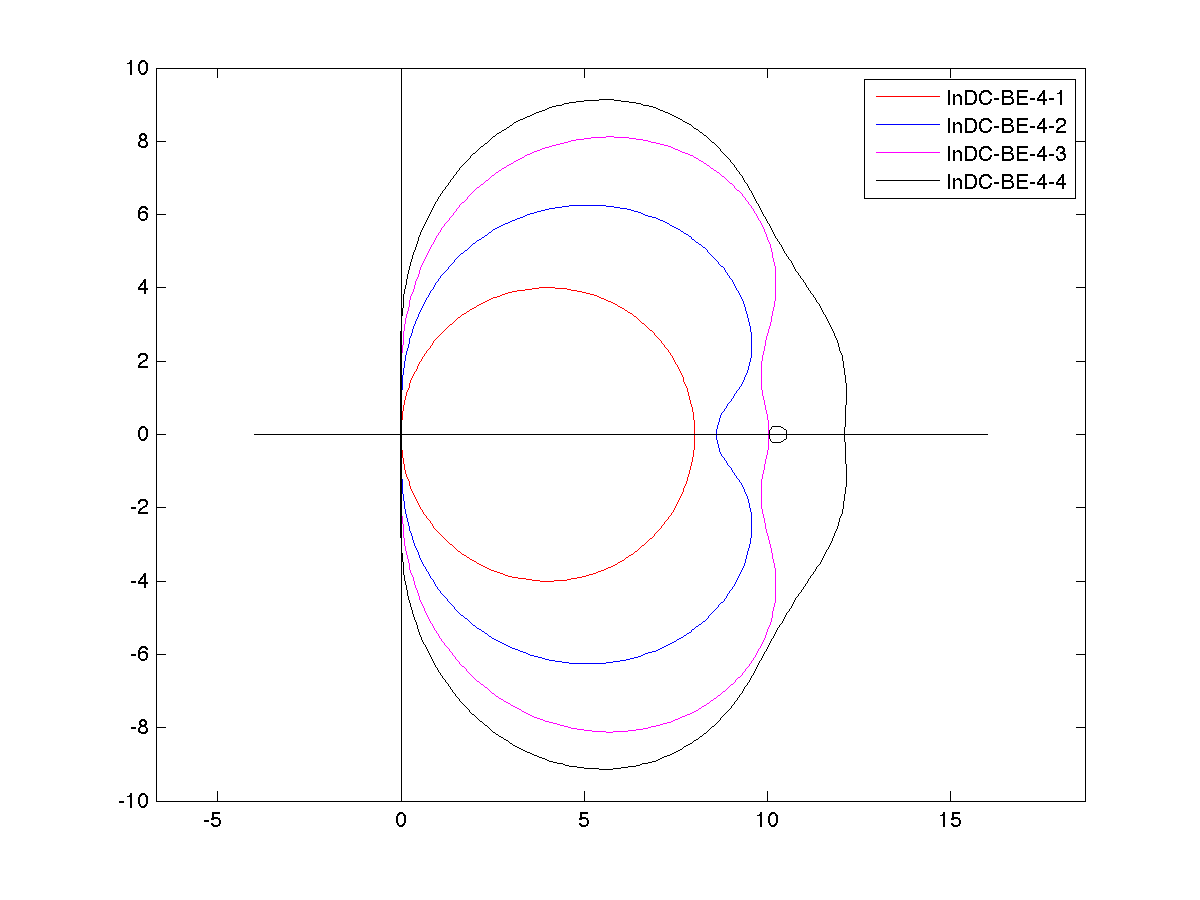}
\includegraphics[width=2.5in]{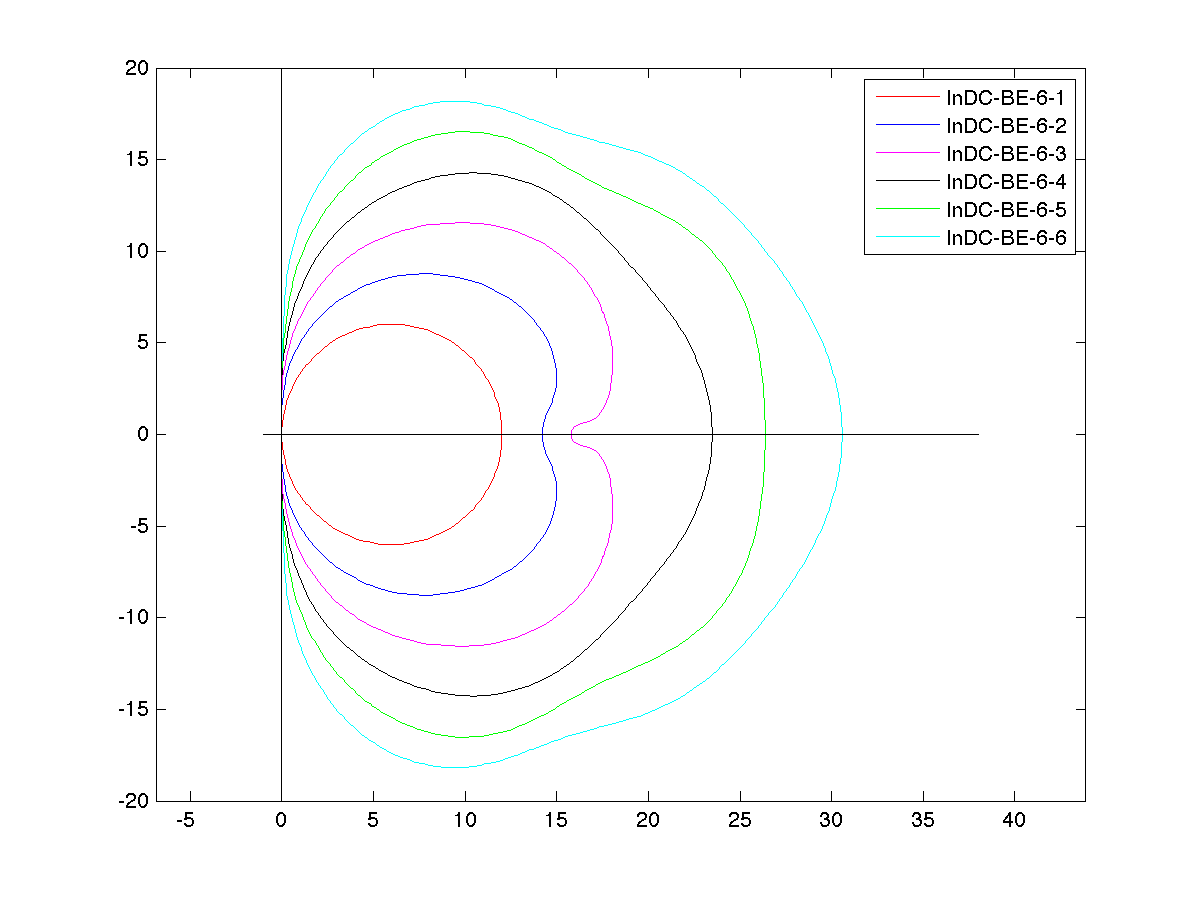}\\
\includegraphics[width=2.5in]{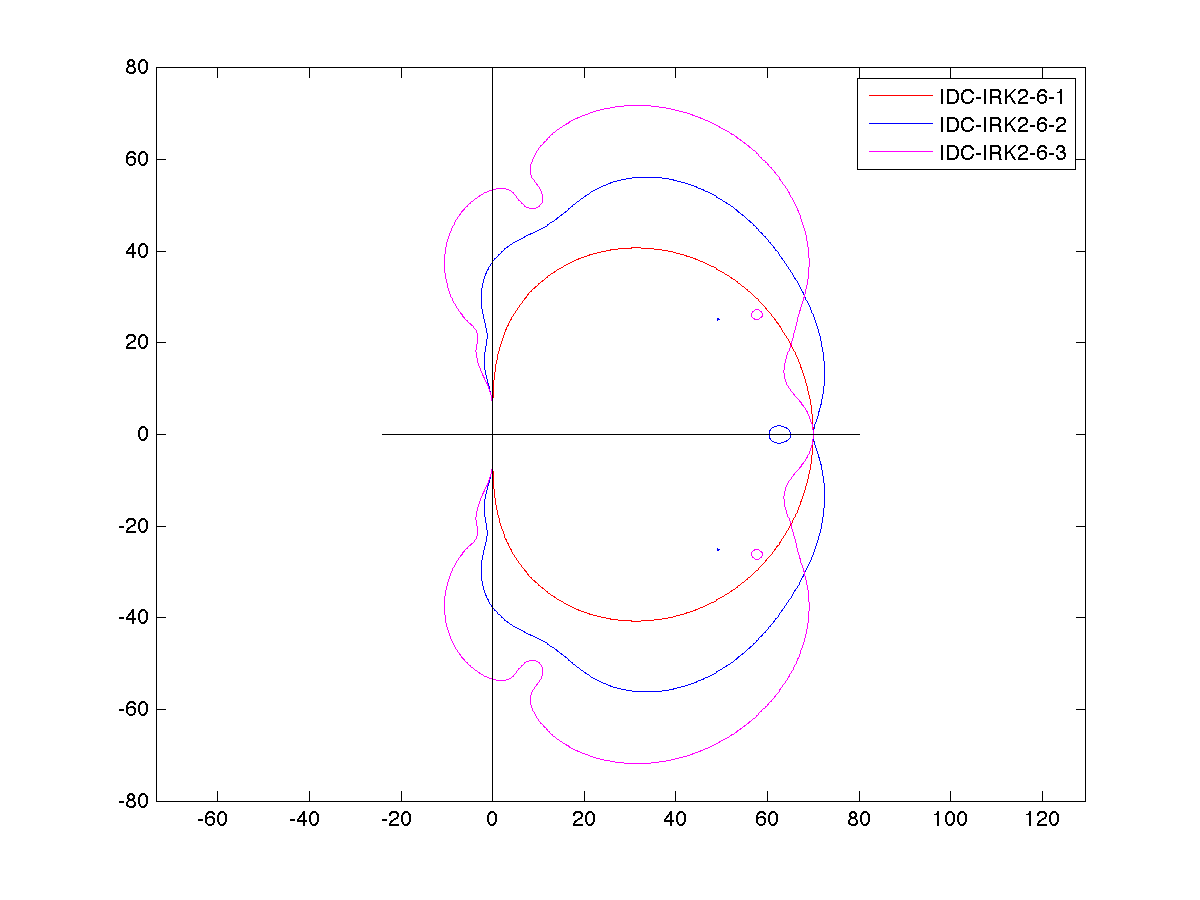}
\includegraphics[width=2.5in]{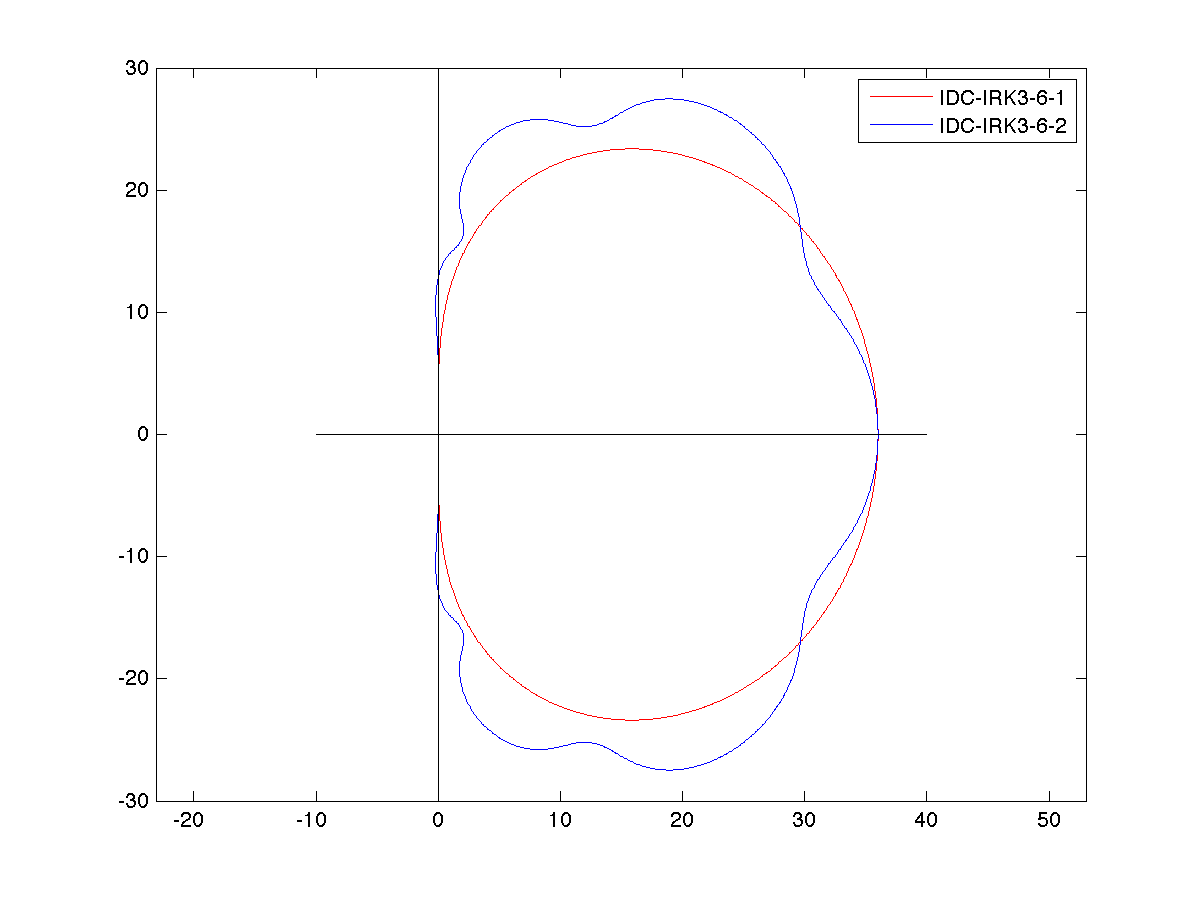}
\end{center}
\caption{Stability regions for InDC-BE method with 4 quadrature points (upper left) and 6 quadrature points (upper right) with various correction loops as indicated in the legend. 
Lower left plot shows the stability region for the InDC-DIRK2-SA method with 6 quadrature points with zero, one and two correction loops as indicated in the legend. 
Lower right plot shows the stability region  for the InDC-Radau method with 6 quadrature points with zero and one correction loops as indicated in the legend. 
}
\label{fig_stab}
\end{figure}

{Now we prove the following proposition. We notice that a similar result on the InDC method using the first order BE scheme was established earlier in \cite{layton2005implications}.}
\begin{prop}
Let $\mathcal{R}(z)$ be the stability function of the InDC method constructed by a stiffly accurate IRK method {with nonsingular matrix $A$ in the corresponding Butcher table}.  We assume $M$ uniform quadrature points, but excluding the left-most point, are used. Then $\lim_{|z|\rightarrow\infty}\mathcal{R}(z) = 0$. That is, the method is $L$-stable, if $A$-stable.
\end{prop}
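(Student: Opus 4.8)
The plan is to show that the stability function $\mathcal{R}(z)$ of the composite InDC method tends to zero as $|z|\to\infty$ by exploiting the fact, established in Proposition~\ref{prop: idc_be_sa}, that the InDC method built from a stiffly accurate IRK method with nonsingular $A$ is itself a stiffly accurate IRK method with an invertible coefficient matrix in its Butcher tableau. The key observation is that for \emph{any} IRK method the stability function is $\mathcal{R}(z) = 1 + z\,\vec{b}^T(I - zA)^{-1}\vec{e}$, where $\vec{e} = (1,\dots,1)^T$, and that its value at infinity can be read off directly from the Butcher coefficients.

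First I would recall the standard formula for $\mathcal{R}(\infty)$. Writing $w = 1/z$, one has
\[
\mathcal{R}(z) = 1 + z\,\vec{b}^T(I - zA)^{-1}\vec{e} = 1 - \vec{b}^T(A - wI)^{-1}\vec{e},
\]
and since the composite matrix $A$ is invertible, letting $|z|\to\infty$ (i.e. $w\to 0$) gives
\[
\mathcal{R}(\infty) = 1 - \vec{b}^T A^{-1}\vec{e}.
\]
This is precisely the quantity $R(\infty) = 1 - \sum_{i,j} b_i w_{ij}$ introduced after equation~\eqref{eq:1-10bis}, with $w_{ij}$ the entries of $A^{-1}$.

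Next I would invoke the stiff accuracy of the composite method. By Proposition~\ref{prop: idc_be_sa} (and its analogue for general InDC SA-IRK methods stated in the Appendix), the InDC method is stiffly accurate, so $\vec{b}^T = \vec{e}_s^T A$ where $\vec{e}_s = (0,\dots,0,1)^T$ is the last standard basis vector. Substituting this into the expression for $\mathcal{R}(\infty)$ and using invertibility of $A$,
\[
\mathcal{R}(\infty) = 1 - \vec{e}_s^T A\, A^{-1}\vec{e} = 1 - \vec{e}_s^T \vec{e} = 1 - 1 = 0,
\]
since the last entry of $\vec{e}$ equals $1$. This is exactly the mechanism already noted in the first Remark following Definition~\ref{DefSA}: stiff accuracy together with nonsingular $A$ forces $R(\infty)=0$. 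Finally, by the definition of $L$-stability (a method is $L$-stable if it is $A$-stable and $\mathcal{R}(z)\to 0$ as $z\to\infty$, see Chap.~IV.3 of \cite{hairer1993solving2}), the method is $L$-stable whenever it is $A$-stable, which proves the claim.

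I expect the only genuine obstacle to be the rigorous justification that the composite InDC method is stiffly accurate with invertible $A$ for \emph{arbitrary} numbers of correction loops and for general IRK building blocks, rather than just the single-correction InDC-BE case displayed explicitly in Proposition~\ref{prop: idc_be_sa}. For InDC-BE this is immediate from the block structure \eqref{matrixIDC_A}, whose last block row matches $(\vec{b}_1^T,\vec{b}_2^T)$ and whose block-lower-triangular form with invertible diagonal blocks $T$ guarantees invertibility; for the general InDC SA-IRK method one must appeal to the explicit Butcher tableau construction carried out in the Appendix, verifying inductively that stiff accuracy and nonsingularity of the diagonal blocks propagate through the correction loops. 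Once that structural fact is in hand, the computation of $\mathcal{R}(\infty)$ is a one-line application of the stiff-accuracy relation.
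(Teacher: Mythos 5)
Your proof is correct, but it takes a genuinely different route from the paper's. The paper argues directly at the level of amplification factors: it sets $\mathcal{R}^{(0)}_m(z)=\left(R\!\left(\tfrac{z}{M}\right)\right)^m$ for the prediction sweep, notes that this vector vanishes as $|z|\to\infty$ precisely because the left-most node is excluded (so $m\ge 1$), then expresses the correction-step amplification as $R$ evaluated at the substep plus a term of the form ${\bf e}^T(I-\tfrac{z}{M}A)^{-1}z(-\tfrac{AP}{M}+S)\vec{\mathcal{R}}^{(0)}$, and concludes by induction on the substep index $m$ and the correction index $k$ that every limit is zero. You instead invoke the global reformulation of the InDC method as a single stiffly accurate IRK method with invertible coefficient matrix (Propositions \ref{prop: idc_be_sa} and \ref{prop: IDC_RK_B}) and then apply the one-line identity $\mathcal{R}(\infty)=1-\vec{b}^TA^{-1}\mathbf{1}=1-\vec{e}_s^T\mathbf{1}=0$, which is exactly the mechanism of the Remark following Definition \ref{DefSA}. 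Your computation of $\mathcal{R}(\infty)$ is impeccable. What your route buys is brevity and conceptual clarity; what it costs is that the entire weight of the argument is transferred to the structural claim that stiff accuracy and nonsingularity of $A$ survive the composition for arbitrary $M$, $K$, and general IRK building blocks --- a claim the paper asserts (Proposition \ref{prop: IDC_RK_B}) but only sketches, whereas its own stability proof is self-contained and never needs the Butcher-tableau reformulation. You flag this dependency honestly, and your observation that excluding the left-most node is what makes the diagonal blocks $T$ (hence the composite $A$) invertible is the correct place where that hypothesis enters your version; in the paper's version the same hypothesis enters instead through $\lim_{|z|\to\infty}\vec{\mathcal{R}}^{(0)}(z)=\mathbf{0}$.
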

\begin{proof} 
{We consider the linear scalar problem $y' = \lambda y$ with $z = \lambda h$ and $y(t=0)=1$.} 
{For a stiffly accurate IRK method with nonsingular matrix $A$ the numerical solution for this linear scalar problem is equal to the last internal stage and the corresponding stability function is
\beq
\label{eq: stab_r}
R(z) = {\bf e}^T (I - A z)^{-1} \mathbf{1}
\eeq
with ${\bf e} = (0, \cdots, 0, 1)^T$ and $\mathbf{1} = (1, 1, \cdots, 1)^T$ 
such that for $z \to \infty$, we get $R(\infty) = 0$, \cite{hairer1993solving2}.
} 
Let $\mathcal{R}^{(k)}_m (z)$ be the amplification factor of the InDC method in the $k$-th iteration at the $m$-th quadrature point. 
In the prediction step, 
\[
\mathcal{R}^{(0)}_m (z) = \left(R \left(\frac zM \right) \right)^m.
\]
Hence $\lim_{|z|\rightarrow\infty}\mathcal{R}^{(0)}_m (z)  = 0$, for $m=1, \cdots, M$, as for the IRK method $\lim_{|z|\rightarrow\infty}{R} \left(\frac z M\right)  = 0$. Let $ \vec{\mathcal{R}}^{(0)} = (\mathcal{R}^{(0)}_1, \cdots, \mathcal{R}^{(0)}_M)$, then  $\lim_{|z|\rightarrow\infty} \vec{\mathcal{R}}^{(0)}(z) = \mathbf{0}$, where $\mathbf{0}$ is a zero vector. {Note that this is true only if the left-most point is excluded.}
 
In the first correction step, on the first subinterval $[0, \Delta t/M]$, the amplification factor of the updated solution at the IRK intermediate stages can be represented as a vector $\vec{r}^{(1)}_1$ with the length of the vector being $s$, the stage number of the IRK method. Then, {with the help of Butcher table notation in Remark~\ref{InDC_SA},}
\beq
\label{eq: stab}
\vec{r}^{(1)}_1(z) = \mathbf{1}  + \frac{z}{M} A (\vec{r}^{(1)}_1 - P \vec{\mathcal{R}}^{(0)}) + z S \vec{\mathcal{R}}^{(0)} =  \mathbf{1}  + \frac{z}{M} A \vec{r}^{(1)}_1+ z(-\frac{A P}{M}  + S) \vec{\mathcal{R}}^{(0)}, \quad \mbox{with} \ {\bf 1} = (1, \cdots,  1, 1)'.
\eeq
Here we let $P$ and $S$ are interpolation and integration matrices of size $s \times M$; they are coefficients that maps the $M$ function values at quadrature nodes to approximate function values at $s$ IRK intermediate stages ($c_i/M$, $i=1, \cdots, s$) and over $s$ integrals ($[0, c_i/M]$, $i=1, \cdots, s$). 
Hence,
\[
\vec{r}^{(1)}_1(z) =(I-\frac{z}{M} A)^{-1} \mathbf{1} +(I-\frac{z}{M} A)^{-1}  z(-\frac{A P}{M}  + S) \vec{{\mathcal R}}^{(0)}.
\]
Since the {IRK} method is stiffly accurate, then
{
\beqa
{\mathcal{R}}^{(1)}_1(z)  &=& {\bf e}^T \cdot \vec{r}^{(1)}_1(z), \qquad \mbox{with} \ {\bf e} = (0, \cdots,  0, 1)^T\notag\\
&=& {\bf e}^T (I-\frac{z}{M} A)^{-1} \mathbf{1}  + {\bf e}^T \cdot (I-\frac{z}{M} A)^{-1}  z(-\frac{A P}{M}  + S) \vec{{\mathcal R}}^{(0)}\notag \\
&\stackrel{\eqref{eq: stab_r}}{=}&R(z_i) + {\bf e}^T \cdot (I-\frac{z}{M} A)^{-1}  z(-\frac{A P}{M}  + S) \vec{{\mathcal R}}^{(0)}. 
\eeqa
}
Hence
\[
\lim_{|z|\rightarrow\infty} {\mathcal R}^{(1)}_1(z) = \lim_{|z_i|\rightarrow\infty} R(z_i) + {\bf e}\lim_{|z|\rightarrow\infty}  \left((I-\frac {z}{M}A)^{-1} z \right) (- \frac{A P}M + S)  \lim_{|z|\rightarrow\infty}  \vec{{\mathcal R}}_0 =0,
\]
since $\lim_{|z|\rightarrow\infty} \vec{{\mathcal R}}_0 = 0$ from the prediction step. {Similar procedure could be repeated for other subintervals by a mathematical induction argument with respect to the $m$, from which we have $\lim_{|z|\rightarrow\infty}{\mathcal R}^{(1)}_m (z)  = 0$, for $m=1, \cdots M$. Specifically, $\lim_{|z|\rightarrow\infty}{\mathcal R}^{(1)}_M (z)  = 0$ after the first correction loop. }

{The same conclusion holds for the future correction steps by the mathematical induction argument with respect to the correction loop $k$, i.e. $\lim_{|z|\rightarrow\infty} {\mathcal {R}}_m^{(k)}(z) = 0$, $m=1, \cdots M$.
} 
\end{proof}

\begin{rem}
The above result can be generalized to the case when quadrature nodes are not uniformly distributed. On the other hand, the assumption to exclude the left-most point is necessary to guarantee the $L$-stability. Specifically, if the left-most point is included, then $\lim_{|z|\rightarrow\infty} \vec{{\mathcal R}}_0 \neq 0$, due to its first component. 
\end{rem}
\begin{rem}
From the stability plots in Figure~\ref{fig_stab}, the InDC-BE methods are L-stable when $M\le 8$ and for the number of iterations $k\le M$. 
\end{rem}

\section{Conclusions}
\label{sec5}
\setcounter{equation}{0}
\setcounter{figure}{0}
\setcounter{table}{0}
This paper studies the order of convergence of the InDC-BE and InDC-IRK methods when applied to SSPs, using uniform distribution of quadrature points excluding the leftmost point. We applied the technique of asymptotic expansion in powers of $\eps$ for the smooth exact solution and for the  corresponding numerical solution presented in \cite{hairer1988error, hairer1993solving2}. Two Theorems on global error estimate in the form of $\varepsilon$-expansion are presented and proved. Especially, we point out that the InDC methods improve the order of the $\eps$-independent error, but there is no order improvement on the higher order terms $\eps^\nu$ ($\nu\ge1$).
Such asymptotic analysis enables us to understand the phenomenon of order reduction for InDC methods when applied to stiff problems.  
{A solution in order to solve this problem is not a trivial matter. In fact, as mentioned in Remarks \ref{rem:bn} and \ref{notY}, the bottleneck is the order reduction phenomenon in the prediction step. Further deep studies are required. It is an interesting topic for future investigation but it is beyond our scope in this paper.}
Numerical results on van der Pol equations confirm these convergence results. 

  
\section{Appendix.}
\setcounter{equation}{0}
\setcounter{figure}{0}
\setcounter{table}{0}
\label{sec: InDC-IRK}

In the appendix, we extend the error estimates of the InDC-BE method to InDC-IRK method when applied to SPPs. 
We first describe the InDC-IRK method applied to (\ref{spp}),  then perform an $\eps$-expansion of the numerical solution of this method, and finally we  prove Theorem~\ref{thm: IDC_RK}.

\subsection{InDC-IRK method}

We consider the InDC-IRK method constructed with $s$-stage IRK methods, where $A$ matrices in the Butcher tableau \eqref{eq: B_table} are invertible.
For the internal stages in the IRK method, we introduce the integration matrix and interpolation matrix as following
\beq
\label{eq: s_cmi}
h S^{c_{mi}, k} =  \int_{\tau_m}^{\tau_m+c_{mi} h}\alpha_k(s) d s, 
\quad
P^{c_{mi}, k} = \alpha_k(\tau_m+c_{mi} h),
\eeq
$\forall m=0, \cdots, M-1, \quad \forall k=1, \cdots M$ and  $\forall mi = 1, \cdots s$,
where $mi$ is index used for the $i^{th}$-stage of the IRK method over the subinterval $[\tau_m, \tau_{m+1}]$.
Here $\alpha_k(s)$ is the Lagrangian basis function based on the node $\tau_k$. 
Let 
\[
S^{c_{mi}}(\bar{f}) = \sum_{j = 1}^{M} S^{c_{mi}, j} f(y_j,z_j), \quad P^{c_{mi}}(\bar{f}) = \sum_{j = 1}^{M} P^{c_{mi}, j} f(y_j,z_j),
\]
{then we have}
\beq
\label{eq: spect_inte_accu}
hS^{c_{mi}}(\bar{f})-\int_{\tau_m}^{\tau_m+c_i h}f(y(s), z(s))ds = \mathcal{O}(h^{M+1}),
\eeq
\beq
\label{eq: spect_inter_accu}
P^{c_{mi}}(\bar{f})-f(y(\tau_m+c_i h), z(\tau_m+c_i h)) = \mathcal{O}(h^{M}),
\eeq
for any smooth function $f$. In other words, the quadrature formula given by $hS^{c_{mi}}(\bar{f})$ approximates the exact integration with $(M+1)^{th}$ order accuracy locally, while the interpolation formula given by  $P^{c_{mi}}(\bar{f})$ approximates the exact solution at RK internal stages with $M^{th}$ order accuracy locally.

To compute the numerical error approximating the error function $e^{(k-1)}(\tau_m)$, $d^{(k-1)}(\tau_m)$ with a general IRK method to (\ref{defint}),  {we obtain}
\beq
\label{princNum}
\left(\begin{array}{c}
\hat{e}_{m+1}^{(k-1)}\\[2mm]
 \eps \hat{d}_{m+1}^{(k-1)}
\end{array}\right)
 = \left(\begin{array}{c}
\hat{e}_{m}^{(k-1)} + h\int_{0}^{1}\delta(\tau_m + \tau h)d\tau\\[2mm]
\eps \hat{d}_{m}^{(k-1)} + h \int_{0}^{1}\rho(\tau_m + \tau h)d\tau
\end{array}\right) 
 + h \sum_{i=1}^{s} b_i \left(\begin{array}{c}
                          \Delta \hat{\mathcal{K}}^{(k-1)}_{mi}\\[2mm]
                            \Delta \hat{\mathcal{L}}^{(k-1)}_{mi}
                          \end{array}\right),
\eeq
and \beq\label{inter_stag}
\left(\begin{array}{c}
\hat{E}_{mi}^{(k-1)}\\[2mm]
\eps \hat{D}_{mi}^{(k-1)}
\end{array}\right)
 = \left(\begin{array}{c}
\hat{e}_{m}^{(k-1)} + h \int_{0}^{c_{mi}}{\delta}(\tau_m + \tau h)d\tau\\[2mm]
\eps \hat{d}_{m}^{(k-1)} + h \int_{0}^{c_{mi}}{\rho}(\tau_m + \tau h)d\tau
\end{array}\right)
 + h  \sum_{j=1}^{s} a_{ij}  \left(\begin{array}{c}
                         \Delta \hat{\mathcal{K}}^{({k-1})}_{mj}\\[2mm]
                      \Delta \hat{\mathcal{L}}^{({k-1})}_{mj}
                          \end{array}\right),
\eeq
with
\beqa 
\label{EvF}
\left(\begin{array}{c}
\Delta \hat{\mathcal{K}}^{({k-1})}_{mi}\\[2mm]
\Delta \hat{\mathcal{L}}^{(k-1)}_{mi}
\end{array}\right)
& \doteq& \left(\begin{array}{c}
f(\hat{Y}^{(k)}_{mi} , \hat{Z}^{(k)}_{mi})-P^{c_{mi}}(\bar{\hat{f}}^{(k-1)}) \\[2mm]
g(\hat{Y}^{(k)}_{mi}, \hat{Z}^{(k)}_{mi})-P^{c_{mi}}(\bar{\hat{g}}^{(k-1)})
\end{array}\right) \\[2mm]
\label{EvF2}
&=&\left(\begin{array}{c}
f(\hat{Y}^{(k)}_{mi} , \hat{Z}^{(k)}_{mi})-f(P^{c_{mi}} (\bar{\hat{y}}^{(k-1)}),  P^{c_{mi}} (\bar{\hat{z}}^{(k-1)})) \\[2mm]
g(\hat{Y}^{(k)}_{mi}, \hat{Z}^{(k)}_{mi})-g(P^{c_{mi}} (\bar{\hat{y}}^{(k-1)}), P^{c_{mi}}(\bar{\hat{z}}^{(k-1)})) 
\end{array}\right) + \mathcal{O}(h^{M}), 
\eeqa
where we put
\beq
\label{Newk}
\hat{Y}^{(k)}_{mi} =  P^{c_{mi}} (\bar{\hat{y}}^{(k-1)})+ \hat{E}^{(k-1)}_{mi}, \quad \hat{Z}^{(k)}_{mi} = P^{c_{mi}} (\bar{\hat{z}}^{(k-1)}) + \hat{D}^{(k-1)}_{mi},
\eeq 
and equation \eqref{EvF2} is due to the high order interpolation accuracy of $P^{c_{mi}}$, see equation \eqref{eq: spect_inter_accu}.
We can rewrite the system \eqref{princNum} and \eqref{inter_stag} as
\beq\label{newK}
\left(\begin{array}{c}
\hat{y}_{m+1}^{(k)} -  h S^{m,(k-1)}_{\bar{\hat{{f}}}}\\[2mm]
\eps\hat{z}_{m+1}^{(k)} - h S^{m,(k-1)}_{\bar{\hat{{g}}}}
\end{array}\right)
 = \left(\begin{array}{c}
\hat{y}_{m}^{(k)}\\
\eps\hat{z}_{m}^{(k)} 
\end{array}\right) 
 + h \sum_{i=1}^{s} b_i \left(\begin{array}{c}
                          \Delta \hat{\mathcal{K}}^{({k-1})}_{mi}\\[2mm]
                           \Delta \hat{\mathcal{L}}^{({k-1})}_{mi}
                          \end{array}\right), 
\eeq
\beq\label{newapproach}
\left(\begin{array}{c}
\hat{Y}_{mi}^{(k)}- hS^{c_{mi},(k-1)}_{\bar{\hat{f}}}\\[2mm]
\eps \hat{Z}_{mi}^{(k)}-  hS^{c_{mi},(k-1)}_{\bar{\hat{g}}}
\end{array}\right)
 = \left(\begin{array}{c}
\hat{y}_{m}^{(k)} \\
\eps \hat{z}_{m}^{(k)} 
\end{array}\right)
 + h  \sum_{j=1}^{s} a_{ij}  \left(\begin{array}{c}
                         \Delta \hat{\mathcal{K}}^{({k-1})}_{mj}\\[2mm]
                        \Delta \hat{\mathcal{L}}^{({k-1})}_{mj}
                          \end{array}\right),
\eeq
with
\beq \label{newS}
\left(\begin{array}{c}
  S^{m,(k-1)}_{\bar{\hat{f}}} = S^m(\bar{\hat{f}}^{(k-1)})\\[2mm]
 S^{m,(k-1)}_{\bar{\hat{g}}} =  S^m(\bar{\hat{g}}^{(k-1)})
                            \end{array}\right),\quad
\left(\begin{array}{c}
  S^{c_{mi},(k-1)}_{\bar{\hat{f}}} = S^{c_{mi}}(\bar{\hat{f}}^{(k-1)})\\[2mm]
 S^{c_{mi},(k-1)}_{\bar{\hat{g}}} =  S^{c_{mi}}(\bar{\hat{g}}^{(k-1)})
                            \end{array}\right).                           
\eeq
\begin{rem}
\label{rem: sa}
Under the assumption $A$ invertible,
from the second equation of \eqref{newapproach} we obtain in vectorial form 
\[
 h \Delta \bar{\hat{\mathcal{L}}}^{(k-1)} = A^{-1} (\eps\bar{\hat{Z}}^{(k)}- \eps\hat{z}^{(k)}_{m} \mathbf{1} - h S^{\bar{c}}(\bar{\hat{g}}^{(k-1)}) ),
\]
with $\Delta \bar{\hat{\mathcal{L}}}^{(k-1)} = (\Delta \hat{\mathcal{L}}^{({k-1})}_{m1}, \cdots, \Delta \hat{\mathcal{L}}^{({k-1})}_{ms})^T$, $\mathbf{1} = (1, \cdots, 1)^T$ and $\bar{c} =  (c_{m1}, \cdots, c_{ms})$. 
Inserting this into the second equation of \eqref{newK}, we get
\begin{eqnarray}\label{eq: z_linear}
\begin{array}{lll}
\eps\hat{z}^{(k)}_{m+1} = \eps R(\infty)\hat{z}^{(k)}_{m} + \eps b^T A^{-1} \bar{\hat{Z}}^{(k)} + h (S^m(\bar{\hat{g}}^{(k-1)}) - b^T A^{-1}S^{\bar{c}}(\bar{\hat{g}}^{(k-1)}) ).\\
\end{array}
\end{eqnarray}
Of special importance now are stiffly accurate RK methods, i.e., methods which satisfy $b^T A^{-1} = e_s^T$. This implies $R(\infty) = 0$ and
$  b^T A^{-1} S^{\bar{c}}(\bar{\hat{g}}^{(k-1)}) = e_s^T S^{\bar{c}}(\bar{\hat{g}}^{(k-1)}) =S^m(\bar{\hat{g}}^{(k-1)}) $. Hence by \eqref{eq: z_linear} we have: 
$\hat{z}^{(k)}_{m+1}  = \hat{Z}^{(k)}_{ms}$.
\end{rem}

\subsection{$\eps$-asymptotic expansion of InDC-IRK methods}
\label{sec: InDC-IRK-expand}

We formally expand the quantities $\Delta \hat{\mathcal{K}}^{(k-1)}_{mi}, 
\Delta\hat{\mathcal{L}}^{(k-1)}_{mi}$ from (\ref{EvF}) and $\hat{Y}_{mi}^{(k)}, \ 
\hat{Z}_{mi}^{(k)},\ \hat{y}_{m+1}^{(k)}, \ \hat{z}_{m+1}^{(k)}$ from \eqref{Newk} and \eqref{newK}
into powers of $\varepsilon$ with $\varepsilon$-independent coefficients
\beq
\label{ExpNum}
\begin{array}{l}
\hat{y}_{m}^{(k)} = \hat{y}_{m,0}^{(k)} + \varepsilon \hat{y}_{m,1}^{(k)} + \varepsilon^2 \hat{y}_{m,2}^{(k)} + \cdots,\\ [2mm]
\hat{Y}_{mi}^{(k)} = \hat{Y}_{mi,0}^{(k)} + \varepsilon \hat{Y}_{mi,1}^{(k)} + \varepsilon^2 \hat{Y}_{mi,2}^{(k)} + \cdots,\\  [2mm]
\Delta\hat{\mathcal{K}}^{(k-1)}_{mi}=\Delta\hat{\mathcal{K}}^{(k-1)}_{mi,0} + \varepsilon \Delta\hat{\mathcal{K}}^{(k-1)}_{mi,1} + \varepsilon^2 \Delta\hat{\mathcal{K}}^{(k-1)}_{mi,2} + \cdots,\\  [2mm]
\hat{z}_{m}^{(k)} = \hat{z}_{m,0}^{(k)} + \varepsilon \hat{z}_{m,1}^{(k)} + \varepsilon^2 \hat{z}_{m,2}^{(k)} + \cdots,\\  [2mm]
\hat{Z}_{mi}^{(k)} = \hat{Z}_{mi,0}^{(k)} + \varepsilon \hat{Z}_{mi,1}^{(k)} + \varepsilon^2 \hat{Z}_{mi,2}^{(k)} + \cdots,\\  [2mm]
\Delta\hat{\mathcal{L}}^{(k-1)}_{mi}= \varepsilon^{-1}\Delta\hat{\mathcal{L}}^{(k-1)}_{mi,-1} + \Delta\hat{\mathcal{L}}^{(k-1)}_{mi,0} + \varepsilon \Delta\hat{\mathcal{L}}^{(k-1)}_{mi,1} + \varepsilon^2 \Delta\hat{\mathcal{L}}^{(k-1)}_{mi,2} + \cdots.
\end{array}
\eeq
Inserting \eqref{ExpNum} into \eqref{EvF} we obtain
\beqa 
\label{Index1FG}
\eps^0: \quad
\Delta \hat{\mathcal{K}}^{({k-1})}_{mi,0}
& =& 
f(\hat{Y}^{(k)}_{mi,0} , \hat{Z}^{(k)}_{mi,0})-f(  P^{c_{mi}} (\bar{\hat{y}}^{(k-1)}_0),  P^{c_{mi}} (\bar{\hat{z}}^{(k-1)}_0)) 
+ \mathcal{O}(h^{M}), \\ [2mm]
 \eps^1: \quad 
\Delta \hat{\mathcal{K}}^{(k-1)}_{mi,1}
&=&
\left(f_y (\hat{Y}^{(k)}_{mi,0}, \hat{Z}^{(k)}_{mi,0})
\hat{Y}^{(k)}_{mi,1}  
+ f_z (\hat{Y}^{(k)}_{mi,0}, \hat{Z}^{(k)}_{mi,0})
\hat{Z}^{(k)}_{mi,1} \right) \notag\\ [2mm]
&&-\left(f_y (P^{c_{mi}}(\bar{\hat{y}}^{(k-1)}_0), P^{c_{mi}}(\bar{\hat{z}}^{(k-1)}_0))
P^{c_{mi}}(\bar{\hat{y}}^{(k-1)}_1) \right. \notag \\ [2mm]
\label{Keps1NU}
&& \left.+ f_z (P^{c_{mi}}(\bar{\hat{y}}^{(k-1)}_0), P^{c_{mi}}(\bar{\hat{z}}^{(k-1)}_0))
P^{c_{mi}}(\bar{\hat{z}}^{(k-1)}_1)   \right) +\mathcal{O}(h^{M}). \\ [2mm]
&& \cdots \nonumber 
\label{eq: K_appNU2}
\eeqa
ans so on. 
Similarly, we have 
\beqa 
\label{Index1FG_L}
 \Delta \hat{\mathcal{L}}^{(k-1)}_{mi,-1}
& =& 
g(  \hat{Y}^{(k)}_{mi,0}, \hat{Z}^{(k)}_{mi,0})-g( P^{c_{mi}} (\bar{\hat{y}}^{(k-1)}_0), P^{c_{mi}}(\bar{\hat{z}}^{(k-1)}_0)) 
+ \mathcal{O}(h^{M}), \\ [2mm]
\label{Leps12}
\Delta \hat{\mathcal{L}}^{(k-1)}_{mi,0}
&=&
\left(g_y (\hat{Y}^{(k)}_{mi,0}, \hat{Z}^{(k)}_{mi,0})
\hat{Y}^{(k)}_{mi,1}  
+ g_z (\hat{Y}^{(k)}_{mi,0}, \hat{Z}^{(k)}_{mi,0})
\hat{Z}^{(k)}_{mi,1} \right) \notag\\ [2mm]
&&-\left(g_y (P^{c_{mi}}(\bar{\hat{y}}^{(k-1)}_0), P^{c_{mi}}(\bar{\hat{z}}^{(k-1)}_0))
P^{c_{mi}}(\bar{y}^{(k-1)}_1) \right. \notag \\ [2mm]
&& \left.+ g_z (P^{c_{mi}}(\bar{\hat{y}}^{(k-1)}_0), P^{c_{mi}}(\bar{\hat{z}}^{(k-1)}_0))
P^{c_{mi}}(\bar{z}^{(k-1)}_1)   \right) \notag + \mathcal{O}(h^{M}).\\ [2mm]
&& \cdots  
\eeqa
ans so on.

Because of the linearity of relations (\ref{newK}) and (\ref{newapproach}), we have to order $\varepsilon^{\nu}$ with 
$\nu = -1$ in vectorial form
\beq
\label{NewCond1}
h A \Delta\hat{\mathcal{L}}_{\bar{m},-1}^{(k-1)}  + h S^{\overrightarrow{c}}(\bar{\hat{g}}) = 0, \quad
h b^T \Delta\mathcal{L}_{\bar{m},-1}^{(k-1)}  + h S^m(\bar{\hat{g}}) = 0,
\eeq
and for $\nu \ge 0$,
\beq\label{newK2}
\left(\begin{array}{c}
\hat{y}_{m+1,\nu}^{(k)} -  h S^{m,(k-1)}_{\bar{\hat{\mathbb{F}}}_\nu}\\[2mm]
\hat{z}_{m+1,\nu}^{(k)} - h S^{m,(k-1)}_{\bar{\hat{\mathbb{G}}}_{\nu{+1}}}
\end{array}\right)
 = \left(\begin{array}{c}
\hat{y}_{m,\nu}^{(k)} \\[2mm]
\hat{z}_{m,\nu}^{(k)} 
\end{array}\right) 
 + h \sum_{i=1}^{s} b_i \left(\begin{array}{c}
                          \Delta \hat{\mathcal{K}}^{(k-1)}_{mi,\nu}\\[2mm]
                           \Delta\hat{\mathcal{L}}^{(k-1)}_{mi,\nu}
                          \end{array}\right),
\eeq 
\beq
\label{newK3}
\left(\begin{array}{c}
\hat{Y}_{mi,\nu}^{(k)} - h S^{c_{mi},(k-1)}_{\bar{\hat{\mathbb{F}}}_\nu}\\[2mm]
\hat{Z}_{mi,\nu}^{(k)}  - h S^{c_{mi},(k-1)}_{\bar{\hat{\mathbb{G}}}_{\nu{+1}}} 
\end{array}\right)
 = \left(\begin{array}{c}
\hat{y}_{m,\nu}^{(k)} \\
\hat{z}_{m,\nu}^{(k)}
\end{array}\right)
 + h  \sum_{j=1}^{i} a_{ij}  \left(\begin{array}{c}
                         \Delta \hat{\mathcal{K}}_{mj,\nu}^{(k-1)}\\[2mm]
                         \Delta \hat{\mathcal{L}}_{mj,\nu}^{(k-1)}
                          \end{array}\right),
\eeq
where 
\beq\label{newSnu}
S^{{m},(k-1)}_{\bar{\hat{\mathbb{F}}}_\nu}=S^{m}(\bar{\hat{\mathbb{F}}}^{(k-1)}_\nu), \quad 
S^{{m},(k-1)}_{\bar{\hat{\mathbb{G}}}_{\nu{+1}}}=
S^{m}(\bar{\hat{\mathbb{G}}}^{(k-1)}_{\nu{+1}}).
 \eeq 
Similarly for  $S^{c_{mi},(k-1)}_{\bar{\hat{\mathbb{F}}}_\nu}$ and  $S^{c_{mi},(k-1)}_{\bar{\hat{\mathbb{G}}}_{\nu{+1}}}$.

\subsection{Proof of Theorem~\ref{thm: IDC_RK}.}
\label{mainT}

Before proving Theorem~\ref{thm: IDC_RK}, we first give some preliminary results as propositions.
We remark that the crucial assumption in Theorem~\ref{thm: IDC_RK} is that the IRK method is \emph{stiffly accurate}. In the case that this property is not satisfied, the method becomes unstable and the numerical solutions diverge, (see Figure~\ref{fig2.2}). 

In order to justify this, from the invertibility of matrix $A$ and by the first formula in (\ref{NewCond1}) we get
\beq
\label{NewCond3}
\Delta\hat{\mathcal{L}}_{m,-1}^{(k)}  = - A^{-1}S^{\overrightarrow{c}}(\bar{\hat{g}}^{(k)}), 
\eeq
substituting now into the second formula in (\ref{NewCond1}) yields
\beq
\label{eq: sa_condition}
-b^T A^{-1} S^{\bar{c}}(\bar{\hat{g}}^{(k-1)}) + S^m(\bar{\hat{g}}^{(k-1)}) = 0.
\eeq
Then we have the following result as an immediate consequence of the fact that the IRK method is stiffly accurate:
\begin{prop}
\label{prop: SA}
Equation~\eqref{eq: sa_condition} is automatically satisfied, if the IRK methods in the prediction and correction steps of the InDC method are stiffly accurate. 
\end{prop}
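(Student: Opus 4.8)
The plan is to exploit the defining relation of stiff accuracy, $b^T = e_s^T A$ with $e_s = (0,\dots,0,1)^T$ (Definition~\ref{DefSA}), which under the invertibility of $A$ is equivalent to $b^T A^{-1} = e_s^T$. As already used in Remark~\ref{rem: sa}, the effect of this relation is to collapse the weighted combination $b^T A^{-1} S^{\bar c}(\bar{\hat g}^{(k-1)})$ onto its last entry, so that \eqref{eq: sa_condition} reduces to a comparison between a single stage-quadrature operator and the full-step operator $S^m$.

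First I would record that a stiffly accurate IRK method necessarily has $c_s = 1$: combining the consistency relation $c_i = \sum_{j} a_{ij}$ with $b_j = a_{sj}$ gives $c_s = \sum_j a_{sj} = \sum_j b_j = 1$, the last equality holding for any method of order at least one. Consequently the final internal node on each subinterval satisfies $c_{ms}=1$, so that by the definition \eqref{eq: s_cmi} the upper integration limit $\tau_m + c_{ms}h$ coincides with $\tau_{m+1}$. This yields $h S^{c_{ms},k} = \int_{\tau_m}^{\tau_{m+1}} \alpha_k(s)\,ds = h S^{m,k}$ for every basis index $k$, and hence the operator identity $S^{c_{ms}}(\cdot) = S^m(\cdot)$; in particular $S^{c_{ms}}(\bar{\hat g}^{(k-1)}) = S^m(\bar{\hat g}^{(k-1)})$.

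Finally I would assemble the pieces. Writing $S^{\bar c}(\bar{\hat g}^{(k-1)}) = (S^{c_{m1}}(\bar{\hat g}^{(k-1)}),\dots,S^{c_{ms}}(\bar{\hat g}^{(k-1)}))^T$ and applying $b^T A^{-1} = e_s^T$ selects precisely the last component, so $b^T A^{-1} S^{\bar c}(\bar{\hat g}^{(k-1)}) = S^{c_{ms}}(\bar{\hat g}^{(k-1)}) = S^m(\bar{\hat g}^{(k-1)})$, which cancels the second term of \eqref{eq: sa_condition}. The identical computation governs the prediction step, where the scheme is a plain stiffly accurate IRK method and the $\varepsilon^{-1}$ balance is controlled by the same relation. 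The argument is purely algebraic, so I anticipate no substantial obstacle; the only point demanding care is the derivation of $c_s=1$ from consistency, since it is exactly this fact that forces the last-stage quadrature to coincide with the full-step quadrature and thereby makes the cancellation exact.
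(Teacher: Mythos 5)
Your proposal is correct and follows essentially the same route as the paper: apply $b^TA^{-1}=e_s^T$ to reduce \eqref{eq: sa_condition} to $-e_s^TS^{\bar c}(\bar{\hat g}^{(k-1)})+S^m(\bar{\hat g}^{(k-1)})=0$, then use $c_s=1$ to identify the last row of the stage integration matrix with $S^m$. Your explicit derivation of $c_s=1$ from $b_j=a_{sj}$, the consistency relation, and $\sum_j b_j=1$ is a welcome clarification of a step the paper states only tersely.
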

\begin{proof}
An IRK method is stiffly accurate if 
\beq
\label{SAp}
b^T A^{-1} = e_s^T,
\eeq
with $e_s = (0, \cdots, 0, 1)^T$.
From \eqref{eq: sa_condition}  we get 
\beq
\label{saproof}
-e_s^TS^{\bar{c}}(\bar{\hat{g}}^{(k-1)}) + S^m(\bar{\hat{g}}^{(k-1)}) = 0.
\eeq
Since the last row of  the spectral integration matrix is $s^{m,k} = \int_{\tau_m}^{\tau_m + c_s h} \alpha_{k}(\tau) d\tau$ by (\ref{SAp}) we get $c_s = 1$ and then $\int_{\tau_m}^{\tau_m + c_s h} \alpha_{k}(\tau) d\tau=\int_{\tau_m}^{t_{m + 1}} \alpha_{k}(\tau) d\tau$. This yields that $e_s^TS^{\bar{c}}(\bar{\hat{g}}^{(k-1)}) = S^m(\bar{\hat{g}}^{(k-1)}) $, and   the equation (\ref{saproof}) is satisfied. 
\end{proof}

{Furthermore, similar to the Proposition~\ref{prop: idc_be_sa}, we have the following result for InDC-IRK methods. This Proposition follows from Remark \ref{InDC_SA}. In fact, similar reformulation have been performed for the InDC method constructed with explicit RK methods in the prediction and correction steps \cite{christlieb2009comments}. 
\begin{prop}
\label{prop: IDC_RK_B}
The InDC method constructed with stiffly accurate IRK methods can be considered again as a stiffly accurate IRK method with a corresponding Butcher Tableau as in\eqref{eq: B_table} with the matrix $A$ invertible.
\end{prop}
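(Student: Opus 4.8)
The plan is to generalize the explicit reformulation of Remark~\ref{InDC_SA} from the backward Euler base integrator to an arbitrary stiffly accurate IRK method, thereby exhibiting the whole InDC-IRK procedure over one time step $[t_n,t_{n+1}]$ as a single IRK method and reading off its Butcher tableau. Following the construction carried out for explicit methods in \cite{christlieb2009comments}, I would collect every internal stage produced during the prediction step and the $K$ correction loops into one long stage vector, indexed by the triple $(k,m,i)$, where $k=0,\ldots,K$ labels the loop, $m=0,\ldots,M-1$ the subinterval, and $i=1,\ldots,s$ the internal stage of the base IRK method on that subinterval. The abscissae are the rescaled values $\tau_m+c_{mi}h$ mapped to $[0,1]$, and the weights are those of the final update. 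Relations \eqref{newK}--\eqref{newapproach} already express each group of stages linearly in terms of the base coefficients $A$, $b$ together with the integration and interpolation matrices $S^{c_{mi}}$, $P^{c_{mi}}$, so the composite coefficient matrix $\mathcal{A}$ and weight vector $\mathbf{b}$ can be assembled directly from them.

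First I would establish the block structure of $\mathcal{A}$, which rests on the \emph{causality} of the scheme. Ordering the stages first by the loop index $k$ and then by the subinterval index $m$, the loop $k$ depends only on the already completed loop $k-1$ through the quantities $S^{c_{mi}}(\bar{\hat f}^{(k-1)})$ and $S^m(\bar{\hat f}^{(k-1)})$, while within a fixed loop the stages on subinterval $m$ depend only on the starting value $\hat{y}^{(k)}_m$ inherited from subinterval $m-1$. Hence $\mathcal{A}$ is block lower triangular, with the diagonal blocks equal to the rescaled base matrix $\tfrac{1}{M}A$ of the underlying IRK method, exactly as the $T$ blocks sit on the diagonal of \eqref{matrixIDC_A} in the BE case. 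Invertibility of $\mathcal{A}$ is then immediate, since $\det\mathcal{A}$ equals the product of the diagonal determinants, each a nonzero multiple of $\det A\neq 0$ by the standing nonsingularity assumption on $A$.

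It remains to verify the stiff-accuracy identity $\mathbf{b}^T=\mathbf{e}^T\mathcal{A}$, with $\mathbf{e}=(0,\ldots,0,1)^T$ selecting the very last composite stage. The exclusion of the left-most quadrature node makes the right endpoint $\tau_M=t_{n+1}$ itself a node, and the stiff accuracy of the base method (Definition~\ref{DefSA}) forces $c_s=1$; consequently $S^{c_{ms}}(\cdot)=S^m(\cdot)$, so the last ($i=s$) line of \eqref{newapproach} coincides term by term with the $y$-update in \eqref{newK}, giving $\hat{Y}^{(k)}_{ms}=\hat{y}^{(k)}_{m+1}$. For the algebraic component the same conclusion $\hat{z}^{(k)}_{m+1}=\hat{Z}^{(k)}_{ms}$ is furnished by Remark~\ref{rem: sa} (which uses $b^TA^{-1}=e_s^T$). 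Propagating this subinterval-level identity through the chain down to the last subinterval of the last correction loop shows that the composite output row equals the last composite stage row, which is precisely $\mathbf{b}^T=\mathbf{e}^T\mathcal{A}$.

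The hard part will be the bookkeeping in this last step: one must check that the off-diagonal coupling contributions entering the output row and the last-stage row through $S^{c_{mi}}$ and $S^m$ agree, not merely that the diagonal $\tfrac1M A$ blocks do. This is exactly where the relation $e_s^TS^{\bar c}(\cdot)=S^m(\cdot)$, valid precisely because $c_s=1$ and already exploited in the proof of Proposition~\ref{prop: SA}, is needed, so that the residual-integral terms feeding the output match those feeding the last stage. Once this matching is in place the equality $\mathbf{b}^T=\mathbf{e}^T\mathcal{A}$ holds, completing the identification of the InDC-IRK method as a stiffly accurate IRK method with invertible matrix $\mathcal{A}$.
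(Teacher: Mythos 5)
Your proposal is correct and follows essentially the same route as the paper, which simply asserts that the result "follows from Remark~\ref{InDC_SA}" and the analogous reformulation for explicit RK methods in \cite{christlieb2009comments} without supplying details. Your block lower triangular assembly of the composite tableau (with diagonal blocks $\tfrac1M A$, whence invertibility) and the verification of $\mathbf{b}^T=\mathbf{e}^T\mathcal{A}$ via $c_s=1$ and $e_s^TS^{\bar c}(\cdot)=S^m(\cdot)$ are exactly the details the paper leaves implicit.
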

{Now we are in the position to prove Theorem~\ref{thm: IDC_RK} via the following two lemmas.} 
\begin{lem}
\label{lem1_for_main}
($\eps^0$ error term)
Consider the reduced system (\ref{reduced}) satisfying (\ref{redg}) with consistent initial values. The numerical solutions {of the InDC method} after $k$ correction loops have the following local error estimates at the interior nodes $\tau_m$,
$m=0, \cdots M$,
\begin{eqnarray}
\label{eq: idc_rk_index1}
\begin{array}{l}
e^{(k)}_{m,0} = \mathcal{O}(h^{min(s_k+1, M+1)}), \quad 
d^{(k)}_{m,0} = \mathcal{O}(h^{min(s_k+1, {M+1})}).
\end{array}
\end{eqnarray} 
\end{lem}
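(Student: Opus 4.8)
The plan is to mirror the proof of Lemma~\ref{lemma1} for the InDC-BE case, reducing the $\eps^0$ dynamics of the full InDC-IRK scheme to the InDC-IRK method applied to the reduced ODE \eqref{eqy}, and then invoking the classical convergence theory for InDC methods on non-stiff problems. The engine of the argument is the observation that, at order $\eps^0$, the stiffly accurate structure forces the numerical solution to lie on the constraint manifold $g=0$, so that $\hat{z}^{(k)}_{m,0}=\mathcal{G}(\hat{y}^{(k)}_{m,0})$ and the $z$-estimate follows from the $y$-estimate through the Lipschitz continuity of $\mathcal{G}$. I would therefore prove the $y$-component estimate and carry the $z$-component along for free.

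First I would treat the prediction step $k=0$. Collecting the $\eps^{-1}$ relation \eqref{NewCond1} together with \eqref{Index1FG_L}, and using the invertibility of $A$ via \eqref{NewCond3}, I would show that the internal stages satisfy $g(\hat{Y}^{(0)}_{mi,0},\hat{Z}^{(0)}_{mi,0})=0$, hence $\hat{Z}^{(0)}_{mi,0}=\mathcal{G}(\hat{Y}^{(0)}_{mi,0})$; by stiff accuracy (Remark~\ref{rem: sa}) the step value inherits this, $\hat{z}^{(0)}_{m,0}=\mathcal{G}(\hat{y}^{(0)}_{m,0})$. Substituting into the $\eps^0$ equations \eqref{newK2}--\eqref{newK3} collapses them into exactly the $s$-stage IRK method applied to \eqref{eqy}. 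Since this IRK method has classical order $p^{(0)}=s_0$, the standard one-step error bound gives $e^{(0)}_{m,0}=\mathcal{O}(h^{p^{(0)}+1})=\mathcal{O}(h^{\min(s_0+1,M+1)})$, and the Lipschitz bound on $\mathcal{G}$ yields the same for $d^{(0)}_{m,0}$.

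Next I would run an induction on the correction index $k$. Assuming the estimate at level $k-1$, Proposition~\ref{prop: SA} (equation \eqref{eq: sa_condition}) guarantees that the $\eps^{-1}$ balance is preserved, so that again $g(\hat{y}^{(k)}_{m,0},\hat{z}^{(k)}_{m,0})=0$ and $\hat{z}^{(k)}_{m,0}=\mathcal{G}(\hat{y}^{(k)}_{m,0})$. With the constraint in hand, the $\eps^0$ correction equations \eqref{newK2}--\eqref{newK3} reduce to the $k$-th correction sweep of the InDC-IRK method applied to the non-stiff ODE \eqref{eqy}, where the residual and integration terms are supplied by the matrices $S^{m}$ and $S^{c_{mi}}$. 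Invoking the classical InDC-IRK convergence result (an $r$-th order corrector raises the order by $r$ per sweep, as in \cite{christlieb2009integral,christlieb2009comments,idcark}), the accumulated order is $s_k=\sum_{j=0}^{k}p^{(j)}$, capped by the local interpolation/quadrature accuracy $\mathcal{O}(h^{M+1})$ of \eqref{eq: spect_inte_accu}--\eqref{eq: spect_inter_accu} (one order lower than usual because the left-most node is excluded). This produces $e^{(k)}_{m,0}=\mathcal{O}(h^{\min(s_k+1,M+1)})$, and once more the $z$-estimate follows from Lipschitz continuity of $\mathcal{G}$.

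The hard part will be the rigorous propagation of the algebraic constraint $g(\cdot,\cdot)=0$ at order $\eps^0$ across the correction loops, which is exactly where the stiffly accurate hypothesis is indispensable: only $b^TA^{-1}=e_s^T$ makes \eqref{eq: z_linear} collapse (with $R(\infty)=0$) and cancels the $z$-recursion, as recorded in Remark~\ref{rem: sa} and Proposition~\ref{prop: SA}; for a non-stiffly accurate method this step fails and the scheme diverges (cf.\ Figure~\ref{fig2.2}). A secondary technical point is to confirm that, once $\hat{Z}^{(k)}_{mi,0}=\mathcal{G}(\hat{Y}^{(k)}_{mi,0})$, the interpolated quantity $P^{c_{mi}}(\bar{\hat{z}}^{(k-1)}_0)$ agrees with $\mathcal{G}$ of the interpolated $y$ up to $\mathcal{O}(h^{M})$, so that the reduced scheme is genuinely the InDC-IRK method for \eqref{eqy} and the cited non-stiff convergence theory applies verbatim.
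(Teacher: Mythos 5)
Your proposal follows essentially the same route as the paper's own proof: both use stiff accuracy and the invertibility of $A$ (and of $g_z$) to propagate the constraint $g=0$ through the prediction and correction sweeps at order $\eps^0$, so that $\hat{z}^{(k)}_{m,0}=\mathcal{G}(\hat{y}^{(k)}_{m,0})$, then identify the resulting $\eps^0$ recursion with the InDC-IRK method applied to the reduced ODE \eqref{eqy} and invoke the classical non-stiff convergence theory of \cite{christlieb2009integral, christlieb2009comments}, obtaining the $z$-estimate from the Lipschitz continuity of $\mathcal{G}$. The argument is correct and matches the paper's proof in all essential steps, including the induction on $k$ that the paper only sketches.
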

\noindent
{\bf Proof.}
Since the IRK method in the prediction step is stiffly accurate, by definition \eqref{DefSA}, we have $b^TA^{-1} = e^T_s$. 
This implies that the numerical solution is equal to the last stage of the method, i.e.  $\hat{z}_{m+1,0}^{(0)} = \hat{Z}^{(0)}_{ms,0}$ and $\hat{y}_{m+1,0}^{(0)} = \hat{Y}^{(0)}_{ms,0}$. By $g(\hat{Y}^{(0)}_{mi,0},\hat{Z}^{(0)}_{mi,0}) = 0$, we get $\hat{Z}^{(0)}_{mi,0} = \mathcal{G}(\hat{Y}^{(0)}_{mi,0})$ for all $mi$ and, in particular, $\hat{Z}^{(0)}_{ms,0} = \mathcal{G}(\hat{Y}^{(0)}_{ms,0})$.  Then this gives $\hat{z}_{m+1,0}^{(0)} = \mathcal{G}(\hat{y}_{m+1,0}^{(0)})$.

{Now, {by the fact that the IRK method is stiffly accurate and that $\bar{\hat{g}}^{(0)}_0 = (g(\hat{y}^{(0)}_{1, 0}, \hat{z}^{(0)}_{1, 0}), \cdots g(\hat{y}^{(0)}_{M, 0}, \hat{z}^{(0)}_{M, 0})) = \vec{0}$ in the prediction step},  for the {first} correction step, {i.e. $k = 1$},  it follows {from \eqref{newK2}} with $\nu = 0$}  
\beq
\label{yzSol}
\begin{array}{l}
\hat{y}_{m+1,0}^{(1)} = 
\hat{y}_{m,0}^{(1)} +  h S^m(\bar{\hat{f}}^{(0)}_{0})
 + h \sum_{i=1}^{s} b_i \Delta \hat{\mathcal{{K}}}^{(0)}_{mi,0},\\
 g(\hat{y}^{(1)}_{m+1, 0}, \hat{z}^{(1)}_{m+1, 0}) =0,
\end{array}
\eeq
{where, from \eqref{newK3}, we have for} the internal stages
\beq
\label{YZsol}
\begin{array}{l}
\hat{Y}_{mi,0}^{(1)} = 
\hat{y}_{m,0}^{(1)} + hS^{c_{mi}}(\bar{\hat{f}}^{(0)}_0)
 + h  \sum_{j=1}^{i} a_{ij} 
                         \Delta  \hat{\mathcal{{K}}}_{mj,0}^{(0)},\\
g( \hat{Y}^{(1)}_{mi,0}, \hat{Z}^{(1)}_{mi,0})=0.
\end{array}
\eeq
Now, from the invertibility of function $g_z$, {by (\ref{yzSol}) and  (\ref{YZsol})} we get $\hat{Z}^{(1)}_{mi,0} = \mathcal{G}(\hat{Y}^{(1)}_{mi,0})$ and
$\hat{z}_{m+1,0}^{(1)} = \mathcal{G}( \hat{y}_{m+1,0}^{(1)})$. Thus the IRK method reads 
\beq
\label{RKeps0}
\begin{array}{c}
\hat{Y}^{(1)}_{mi,0} = \hat{y}^{(1)}_{m,0} + h \sum_{j=1}^s a_{ij} \Delta \mathcal{\hat{K}}^{(0)}_{mj,0}+h S^{{c}_{mi}}(\bar{\hat{f}}_0^{(0)}),\\   
\hat{y}^{(1)}_{m+1,0} = \hat{y}^{(1)}_{m,0} + h \sum_{i=1}^s b_{i} \Delta\mathcal{\hat{K}}^{(0)}_{mi,0}+h S^m(\bar{\hat{f}}_0^{(0)}),
\end{array}
\eeq
where
$
\bar{\hat{f}}^{(0)}_0 = (f(\hat{y}^{(0)}_{0, 0}, \mathcal{G}(\hat{y}^{(0)}_{0, 0})), \cdots f(\hat{y}^{(0)}_{M, 0}, \mathcal{G}(\hat{y}^{(0)}_{M, 0})).
$
The scheme (\ref{RKeps0}) of updating $\hat{y}^{(1)}_{m+1,0}$ can be interpreted as the applying a correction step {of the InDC method} to the ordinary differential equation (\ref{eqy}). Therefore applying similar local truncation error estimates as in \cite{christlieb2009comments, christlieb2009integral} for InDC frameworks using RK methods when applied to a classical ordinary differential equation, we obtain the local error estimate 
\begin{eqnarray}
\label{e-est}
e^{(1)}_{m,0} = \mathcal{O}(h^{min(s_2+1, {M+1})}), 
\end{eqnarray}
for $m=0, \cdots M$, with $s_2 = p^{(0)} + p^{(1)}$.
By $\hat{z}^{(1)}_{m,0} = \mathcal{G}(\hat{y}^{(1)}_{m,0})$, using
the Lipschitz condition of $\mathcal{G}$, we get
\beq
\label{e-est-d}
d^{(1)}_{m,0} = z_{m,0} - \hat{z}^{(1)}_{m,0} = \mathcal{O}(h^{min(s_2+1, {M+1})}).
\eeq
Similarly, at internal stages of the IRK method, by $\hat{Z}^{(1)}_{mi,0} = \mathcal{G}(\hat{Y}^{(1)}_{mi,0})$, we have the following local error estimates,
\beq
\label{e-est2}
E^{(1)}_{mi,0}  = y_0(\tau_m + c_i h) - \hat{Y}^{(1)}_{mi,0} =  \mathcal{O}(h^{min(s_1 + q^{(1)} + 1, M+1)}),
\eeq
and
\beq
\label{e-est2-d}
D^{(1)}_{mi,0} =z_0(\tau_m + c_i h) - \hat{Z}^{(1)}_{mi,0} 
= \mathcal{O}(h^{min(s_1 + q^{(1)} + 1, M+1)}),
\eeq
where $q^{(1)}$ is the stage order for the IRK method applied to the first correction loop.
We note that the proof of the general $k$ is similar. 
$\Box$

\begin{rem}
The local truncation error estimate \eqref{e-est} from \cite{christlieb2009integral} is quite technically involved; it is related to 
estimating the smoothness of rescaled error functions. The estimate \eqref{e-est2} follows a similar fashion. We refer readers to the original paper \cite{christlieb2009integral} for details.
\end{rem}

\begin{rem}
With the estimates in the above Lemma, i.e. equations \eqref{e-est}-\eqref{e-est2-d}, it follows from equation \eqref{Keps1NU} 
\beqa
\Delta \hat{\mathcal{K}}^{(k-1)}_{mi,1}
&=& f_y({y}_{mi,0} , {z}_{mi, 0})\hat{E}^{(k-1)}_{mi,1} 
+ f_z({y}_{mi,0} , {z}_{mi, 0})\hat{D}^{(k-1)}_{mi,1} + \mathcal{O}(h^{s_{k-1}+1}).\notag\\   
\label{eq: K_appNU}
&\doteq& \Delta \mathcal{K}^{(k-1)}_{mi,1}  + \mathcal{O}(h^{s_{k-1}+1})
\eeqa
where $\hat{E}^{(k-1)}_{mi,1}$ and $\hat{D}^{(k-1)}_{mi,1}$ are defined by the corresponding $\eps$-expansion of equation \eqref{Newk}, $s_k = \sum_{r =0}^{k} p^{(r)}$, and
$\Delta \mathcal{K}^{(k-1)}_{mi,1}\doteq f_y({y}_{mi,0} , {z}_{mi, 0})\hat{E}^{(k-1)}_{mi,1} + f_z({y}_{mi,0} , {z}_{mi, 0})\hat{D}^{(k-1)}_{mi,1}$. Here we have used the abbreviations $y_{mi,0}$ and $z_{mi,0}$, i.e. the exact solution $y(t)$ and $z(t)$ at the position $t = \tau_m + c_i h$ respectively.
We note that, {from \eqref{Keps1NU}}, we replaced $\hat{Y}^{(k)}_{mi,0}$ and $P^{c_{mi}}(\bar{\hat{y}}^{(k-1)}_0)$ by adding and subtracting $y_{mi,0}$ with an error of $\mathcal{O}(h^{s_{k-1}+q^{(k)}+1})$ and $\mathcal{O}(h^{s_{k-1}+1})$, the same for $\hat{Z}^{(k)}_{mi,0}$ and $P^{c_{mi}}(\bar{\hat{z}}^{(k-1)}_0)$.\\
Similarly, we have from \eqref{Leps12}
\beqa\label{Leps1}
\Delta \hat{\mathcal{L}}^{(k-1)}_{mi,0}
&=& g_y({y}_{mi,0} , {z}_{mi, 0})\hat{E}^{(k-1)}_{mi,1} +
g_z({y}_{mi,0} , {z}_{mi, 0})\hat{D}^{(k-1)}_{mi,1} + \mathcal{O}(h^{s_{k-1}+1})\notag\\   
\label{eq: L_app}
&\doteq& \Delta \mathcal{L}^{(k-1)}_{mi,0}  + \mathcal{O}(h^{s_{k-1}+1}),
\eeqa
where $\Delta \mathcal{L}^{(k-1)}_{mi,0}\doteq  g_y({y}_{mi,0} , {z}_{mi, 0})\hat{E}^{(k-1)}_{mi,1} +
g_z({y}_{mi,0} , {z}_{mi, 0})\hat{D}^{(k-1)}_{mi,1}$.\\
\end{rem}

\begin{lem}
\label{lem2_for_main}
($\eps^\nu$ error term)
Consider the same assumptions as in Theorem~\ref{thm: IDC_RK} with $0<\eps<<1$. Then the numerical solutions of the InDC method after $k$ correction loops have the following local error estimates at the interior nodes $\tau_m$
with $m=0, \cdots M$
\beq
\label{error_nu}
e^{(k)}_{m,\nu} = y_{m,\nu} - \hat{y}^{(k)}_{m,\nu} = \mathcal{O}(h^{{q}^{(0)}+2-\nu}), \quad
d^{(k)}_{m,\nu} = z_{m,\nu} - \hat{z}^{(k)}_{m,\nu} = \mathcal{O}(h^{{q}^{(0)}+1-\nu}),
\eeq
with $1 \leq \nu \leq q^{(0)} + 1$.
\end{lem}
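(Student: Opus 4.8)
The plan is to generalize the argument of Lemma~\ref{lemma3} from the backward Euler case ($q^{(0)}=1$, $\nu=1$) to a general stiffly accurate IRK prediction of stage order $q^{(0)}$ and to all orders $1\le\nu\le q^{(0)}+1$. I would obtain \eqref{error_nu} by a nested induction: an outer induction on $\nu$, a middle induction on the correction loop $k$ (with base case $k=0$ supplied by the classical theory), and an inner induction on the quadrature index $m$ within a single time step (with base case $e^{(k)}_{0,\nu}=d^{(k)}_{0,\nu}=0$, since the local error analysis starts from exact values at $\tau_0$). The outer induction on $\nu$ is natural because the inhomogeneities $\phi_\nu,\psi_\nu$ in \eqref{eq: exact_eps_nu} depend only on $y_{\nu'},z_{\nu'}$ with $\nu'<\nu$, whose errors are controlled at the strictly higher order $h^{q^{(0)}+2-\nu'}$ by the induction hypothesis.

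First I would settle the base case $k=0$: the prediction step is exactly the stiffly accurate IRK method applied to the DAE hierarchy \eqref{eq: exact_eps_0}--\eqref{eq: exact_eps_nu} of index $\nu+1$, so the required estimates follow from the classical convergence theory for IRK methods on higher-index DAEs (the analogue of Lemma~4.4 in Chap.~VII.4 of \cite{hairer1993solving2} already invoked for BE). For the inductive step I would integrate the exact $\eps^\nu$ system over $[\tau_m,\tau_{m+1}]$ and over each stage interval $[\tau_m,\tau_m+c_{mi}h]$, subtract the numerical $\eps^\nu$-scheme \eqref{newK2}--\eqref{newK3}, and add and subtract the spectral-integration sums $hS^{m}(\bar{\mathbb{F}}_\nu)$, $hS^{c_{mi}}(\bar{\mathbb{F}}_\nu)$ and their $\mathbb{G}_{\nu+1}$ counterparts. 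The quadrature and interpolation accuracies \eqref{eq: spect_inte_accu}--\eqref{eq: spect_inter_accu} then replace the exact integrals by these sums up to $\mathcal{O}(h^{M+1})$, and \eqref{eq: K_appNU}--\eqref{eq: L_app} linearize the increments $\Delta\hat{\mathcal{K}}_{mi,\nu}$, $\Delta\hat{\mathcal{L}}_{mi,\nu-1}$ in terms of the stage errors $\hat E_{mi,\nu},\hat D_{mi,\nu}$ plus an $\mathcal{O}(h^{s_{k-1}+1})$ remainder. Finally I would use \eqref{eq: err_errhat}, $\hat e^{(k-1)}_{m,\nu}=e^{(k-1)}_{m,\nu}-e^{(k)}_{m,\nu}$, together with the loop-induction hypothesis, to rewrite the hatted increment errors in terms of the unknowns $e^{(k)}_{\cdot,\nu},d^{(k)}_{\cdot,\nu}$, exactly as in \eqref{eq: eee}--\eqref{eq: new_error_3_y}.

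This yields a coupled linear recursion in $m$ for the stage and nodal errors. The algebraic part is closed first: because the index shift places $z'_{\nu-1}$ (not $z'_\nu$) on the left of \eqref{eq: exact_eps_nu}, the $z$-update equation already carries the better-controlled quantity $d^{(k)}_{m,\nu-1}=\mathcal{O}(h^{q^{(0)}+2-\nu})$, so after dividing by $h$ the invertibility of $g_z$ lets me solve for $d^{(k)}_{m+1,\nu}=-g_z^{-1}g_y\,e^{(k)}_{m+1,\nu}+\mathcal{O}(h^{q^{(0)}+1-\nu})$, the analogue of \eqref{eq: de1}. Substituting into the $y$-recursion and using nonsingularity of $A$ with stiff accuracy — so that the IRK stage system is uniquely solvable for small $h$ and the one-step propagation factor is $(I-h(f_y-f_z g_z^{-1}g_y))^{-1}$ with uniformly bounded norm, as in \eqref{eq: new_error_4_y1} — closes the inner induction over the $M$ (fixed) substeps and gives $e^{(k)}_{m+1,\nu}=\mathcal{O}(h^{q^{(0)}+2-\nu})$, hence $d^{(k)}_{m+1,\nu}=\mathcal{O}(h^{q^{(0)}+1-\nu})$.

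The main obstacle is the bookkeeping of powers of $h$ across the three inductions: one must check that every lower-order ($\nu'<\nu$) contribution entering $\phi_\nu,\psi_\nu$, the interpolation errors of $P^{c_{mi}}$, and the linearization remainders is $\mathcal{O}(h^{q^{(0)}+2-\nu})$ or better, so that none degrades the target order. In particular the linearization remainders $\mathcal{O}(h^{s_{k-1}+1})$ are harmless because $s_{k-1}\ge p^{(0)}>q^{(0)}$ under the hypotheses of Theorem~\ref{thm: IDC_RK}, whence $s_{k-1}+1\ge q^{(0)}+2-\nu$. The shifted index is the structural heart of the estimate: it is precisely what makes the algebraic component one power of $h$ weaker than the differential one and, as in Remark~\ref{rem:bn}, what prevents the order from improving with the correction loop $k$, thereby encoding the order-reduction bottleneck.
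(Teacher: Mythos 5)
Your proposal follows essentially the same route as the paper's proof: subtract the $\eps^\nu$-scheme \eqref{newK2}--\eqref{newK3} from the integrated exact hierarchy, insert the quadrature sums with accuracy \eqref{eq: spect_inte_accu}--\eqref{eq: spect_inter_accu}, convert hatted increments to errors via \eqref{eq: err_errhat}, close the algebraic component through the index shift and invertibility of $g_z$ (the paper's \eqref{eps1tris}--\eqref{eq: D_E}, using nonsingularity of $A$), and then the differential component by the $m$-recursion, with stiff accuracy giving the nodal $z$-estimate from the last stage. The paper writes out only $\nu=1$ and defers $\nu>1$ and general $k$ to an analogy with Theorem 3.4 of Chap.~VI of \cite{hairer1993solving2}, whereas you make the triple induction (on $\nu$, $k$, $m$) and the power-of-$h$ bookkeeping explicit, which is a more detailed rendering of the same argument rather than a different one.
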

\begin{proof} We first prove \eqref{error_nu} in the case $\nu = 1$.
In the prediction step $(k = 0)$, under the assumption of stiffly accurate IRK method, by the Corollary 3.10 in \cite{hairer1993solving2}, we get that the error estimates for $\eps^1$ in (\ref{errorExp}) 
at the interior nodes of the InDC {method} with $m=0, \cdots M$ satisfy
\begin{eqnarray}\label{edd}
e^{(0)}_{m,1}= y_{m,1}-\hat{y}^{(0)}_{m, 1} = \mathcal{O}(h^{q^{(0)} + 1}), \quad d^{(0)}_{m,1}= z_{m,1}-\hat{z}^{(0)}_{m,1} = \mathcal{O}(h^{q^{(0)}}).
\end{eqnarray}

We consider $\varepsilon$-expansions of $\hat{y}^{(1)}_{m}$, $\hat{z}^{(1)}_{m}$ and $ \hat{E}^{(1)}_{mi}$ and $\hat{D}^{(1)}_{mi}$ as in \eqref{ExpNum}. Inserting them onto equations \eqref{eq: K_appNU}, \eqref{eq: L_app}, from \eqref{newK2} and \eqref{newK3} for the power $\varepsilon^1$ with {$k = 1$ and $\nu = 1$}, we have
\beq 
\label{RKeps1}
\left(\begin{array}{c}
\hat{y}^{(1)}_{m+1,1} - h S^{{m},(0)}_{\bar{\hat{\mathbb{F}}}_1}\\
\hat{z}^{(1)}_{m+1,0} - h S^{{m},(0)}_{\bar{\hat{\mathbb{G}}}_1} \end{array}\right) 
=  \left(\begin{array}{c}
\hat{y}^{(1)}_{m,1}\\
\hat{z}^{(1)}_{m,0} 
\end{array}\right)
+ h \sum_{i=1}^s b_{i} 
 \left(\begin{array}{c}
\Delta {\mathcal{K}}^{(0)}_{mi,1},\\ 
\Delta {\mathcal{L}}^{(0)}_{mi,0}
\end{array}\right)
+ \mathcal{O}(h^{p^{(0)}+2}),
\eeq
and
\beq
\label{RKeps1bis}
\left(\begin{array}{c}
\hat{Y}^{(1)}_{mi,1} - h S^{c_{m,i},(0)}_{\bar{\hat{\mathbb{F}}}_1}\\
\hat{Z}^{(1)}_{mi,0} -hS^{c_{m,i},(0)}_{\bar{\hat{\mathbb{G}}}_1}
\end{array}\right)
= \left(\begin{array}{c}
\hat{y}^{(1)}_{m,1} \\
\hat{z}^{(1)}_{m,0}
\end{array}\right)
+ h \sum_{j=1}^s a_{ij} 
 \left(\begin{array}{c}
\Delta {\mathcal{K}}^{(0)}_{mj,1}\\
\Delta {\mathcal{L}}^{(0)}_{mj,0}
\end{array}\right)
+ \mathcal{O}(h^{p^{(0)}+2}).
\eeq

Now from (\ref{eq:exact-1})  we have for $\varepsilon^1$, 
\beq
\label{exact_eps1}
\begin{array}{c}
{y}_{m+1,1} = {y}_{m,1} +\int_{\tau_m}^{\tau_{m+1}}{\mathbb{F}}_1(t) dt,\quad
{z}_{m+1,0} = {z}_{m,0} +\int_{\tau_m}^{\tau_{m+1}}{\mathbb{G}}_1(t) dt.
\end{array}
\eeq
We subtract \eqref{RKeps1} from \eqref{exact_eps1} and so obtain 
\beq
\label{RKeps2}
\left(\begin{array}{c}
e^{(1)}_{m+1,1} +h S^{{m},(0)}_{\bar{\hat{\mathbb{F}}}_1}- \int_{\tau_m}^{\tau_{m+1}}{\mathbb{F}_1}(t) dt\\
d^{(1)}_{m+1,0} +h S^{{m},(0)}_{\bar{\hat{\mathbb{G}}}_1}- \int_{\tau_m}^{\tau_{m+1}}{\mathbb{G}_1}(t) dt
\end{array}\right)
 = \left(\begin{array}{c}
 e^{(1)}_{m,1} \\
  d^{(1)}_{m,0} 
  \end{array}\right)
- h \sum_{i=1}^s b_{i}
\left(\begin{array}{c}
 \Delta{\mathcal{K}}^{(0)}_{mi,1}  \\
\Delta {\mathcal{L}}^{(0)}_{mi,0}  \end{array}\right)+ \mathcal{O}(h^{p^{(0)}+2}).
\eeq
From  the Corollay 3.10 in \cite{hairer1993solving2} and \eqref{edd}, we have the following estimates for the local errors
\beq
\label{eq: Est_index2}
\begin{array}{l}
e^{(0)}_{m,0} = y_{m,0} -  \hat{y}^{(0)}_{m,0} = \mathcal{O}(h^{p^{(0)} + 1}), \quad
d^{(0)}_{m,0} = z_{m,0} -  \hat{z}^{(0)}_{m,0} = \mathcal{O}(h^{p^{(0)}+1}),\\
e^{(0)}_{m,1} = y_{m,1} -  \hat{y}^{(0)}_{m,1} = \mathcal{O}(h^{q^{(0)} + 1}), \quad
d^{(0)}_{m,1} = z_{m,1} -  \hat{z}^{(0)}_{m,1} = \mathcal{O}(h^{q^{(0)}}).\\
\end{array}
\eeq
Similarly as done in the proof of Lemma \ref{lemma3}, on the right hand-side of (\ref{RKeps2}) 
we add and subtract the quantities $S^m(\bar{\mathbb{F}}_1)$ and $S^m(\bar{\mathbb{G}}_1)$, {these are the integrals of $(M-1)^{th}$ degree interpolating polynomials on $(\tau_m, \mathbb{F}_1(\tau_m))^M_{m=1}$ and
$(\tau_m,\mathbb{G}_1(\tau_m))^M_{m =1}$ over the subinterval $[\tau_m,\tau_{m+1}]$.
Hence, $\int_{\tau_m}^{\tau_{m+1}} \mathbb{F}_1(\tau)d\tau - h S^m(\bar{\mathbb{F}}_1) = \mathcal{O}(h^{M+1})$ and by \eqref{eq: Est_index2}, we have 
$S^m(\bar{\mathbb{F}}_1) - S^{{m},(0)}_{\bar{\hat{\mathbb{F}}}_1} =  \mathcal{O}(h^{q^{(0)}})$ and $ S^m(\bar{\mathbb{G}}_1) - S^{{m},(0)}_{\bar{\hat{\mathbb{G}}}_1} =  \mathcal{O}(h^{q^{(0)}})$.}
Then we {have from (\ref{RKeps2})} 
\beqa
\label{RKeps_22}
\begin{array}{lll}
e^{(1)}_{m+1,1} 
&=&
e^{(1)}_{m,1} - h \sum_{i=1}^s b_{i} \Delta \mathcal{K}^{(0)}_{mi,1} + \mathcal{O}(h^{q^{(0)} + 1}),\\
d^{(1)}_{m+1,0} 
&=&d^{(1)}_{m,0} - h \sum_{i=1}^s b_{i} \Delta \mathcal{L}^{(0)}_{mi,0} + \mathcal{O}(h^{q^{(0)} + 1}). 
\end{array}
\eeqa
Now we consider the $\eps$-expansion of the error at internal stages $\tau_m + c_i h$, and as in equation \eqref{eq: eee} we get
\beq
\label{eq: Err_Errhat}
\begin{array}{l}
E^{(1)}_{mi, 1} = P^{c_{mi}}(\bar{{e}}^{(0)}_{1}) - \hat{E}^{(0)}_{mi, 1}, \quad
D^{(1)}_{mi, 1} = P^{c_{mi}}(\bar{{d}}^{(0)}_{1}) - \hat{D}^{(0)}_{mi, 1},\\
\end{array}
\quad
\forall k\ge0, m.
\eeq
where $\bar{e}^{(0)}_{1}  = (e^{(0)}_{m1,1}, \cdots, e^{(0)}_{ms,1})$, $\bar{d}^{(0)}_{1}  = (d^{(0)}_{m1,0}, \cdots, d^{(0)}_{ms,0})$, $s$ is the number of internal stages in an IRK method.
Especially, by (\ref{eq: Est_index2}), it follows from (\ref{eq: Err_Errhat}), 
\beq
\label{eq: Est_Corr_index2}
\begin{array}{l}
\hat{E}^{(0)}_{mi,1} = - E^{(1)}_{mi, 1} + \mathcal{O}(h^{q^{(0)} + 1}),\quad\\
\hat{D}^{(0)}_{mi,1} = - D^{(1)}_{mi, 1} +\mathcal{O}(h^{q^{(0)}}).
\end{array}
\eeq
Similarly as equations \eqref{RKeps_22}, from the definition of stage order for the prediction step, we have for the internal stages in vectorial form
\beq
\label{RKeps2-internal}
\begin{array}{ccc}
\bar{E}^{(1)}_{1} &=& e^{(1)}_{m,1} \mathbf{1} - h A \Delta \bar{\mathcal{K}}^{(0)}_{1} + \mathcal{O}(h^{q^{(0)} + 1}),\\
\bar{D}^{(1)}_{0} &=& d^{(1)}_{m,0} \mathbf{1}- h A \Delta \bar{\mathcal{L}}^{(0)}_{0} + \mathcal{O}(h^{q^{(0)} + 1}),
\end{array}
\eeq
where $\bar{E}^{(1)}_{1}  = (E^{(1)}_{m1,1}, \cdots, E^{(1)}_{ms,1})$, $\bar{D}^{(1)}_{0}  = (D^{(1)}_{m1,0}, \cdots, D^{(1)}_{ms,0})$ and 
$\mathbf{1} = (1, 1, \cdots, 1)^T$ is a vector of size $s$.
Now from the second equation in \eqref{RKeps2-internal} and using \eqref{eq: idc_rk_index1} and \eqref{eq: Est_Corr_index2}, we get 
\beq
\label{eps1tris}
\begin{array}{c}
A(g_y(y_{mi, 0}, z_{mi, 0}) \hat{E}^{(0)}_{mi,1} + 
g_z(y_{mi, 0}, z_{mi, 0})\hat{E}^{(0)}_{mi,1}) = \mathcal{O}(h^{q^{(0)}}),
\end{array}
\eeq
Thus, from the invertibility of matrix $A$ we have
\beq
\label{eq: D_E}
\hat{D}^{(0)}_{mi,1} = -(g_z^{-1} g_y)(y_{mi, 0}, z_{mi, 0}) \hat{E}^{(0)}_{mi,1} + \mathcal{O}(h^{q^{(0)}}), 
\eeq
for $mi = m1, \cdots, ms$. Plug the above equation \eqref{eq: D_E} into equation \eqref{eq: K_appNU} and replace $\hat{E}^{(0)}_{mi,1}$ by $E^{(1)}_{mi,1}$ with $\mathcal{O}(h^{q^{(0)}+1})$ error and $\hat{D}^{(0)}_{mi,1}$ by $D^{(1)}_{mi,1}$ with $\mathcal{O}(h^{q^{(0)}})$ error,
by \eqref{eq: Est_Corr_index2} we obtain
\beq
\Delta \mathcal{K}^{(0)}_{mi,1} 
= (f_y -f_z g_z^{-1}g_y)(y_{mi, 0}, z_{mi, 0}) E^{(1)}_{mi,1} + \mathcal{O}(h^{q^{(0)}}).
\label{relKL} 
\eeq
Our next aim now is to prove the local error $e^{(1)}_{m,1} =  \mathcal{O}(h^{q^{(0)}+1})$ by mathematical induction w.r.t. $m$. Especially, we would like to show that $e^{(1)}_{m+1,1}= \mathcal{O}(h^{q^{(0)}+1})$, if we assume the local error $e^{(1)}_{l,1} = \mathcal{O}(h^{q^{(0)}+1})$, $\forall l\le m$. To show this, we plug equation \eqref{relKL} into the first equation \eqref{RKeps2-internal} and obtain
$
E^{(1)}_{mi,1} =  \mathcal{O}(h^{q^{(0)}+1}),
$
for $mi = m1, \cdots, ms$. From \eqref{relKL}, $\Delta \mathcal{K}^{(0)}_{mi,1} =  \mathcal{O}(h^{q^{(0)}})$ and plug this estimate into the first equation of \eqref{RKeps_22}, we obtain the desired estimate of
\beq\label{locallocal}
e^{(1)}_{m+1,1}= \mathcal{O}(h^{q^{(0)}+1}).
\eeq
Thus, from \eqref{eq: D_E} and \eqref{eq: Est_Corr_index2},  it follows
\beq
\label{eq: D_error}
D^{(1)}_{mi,1} =  \mathcal{O}(h^{q^{(0)}}).
\eeq 
Now in order to prove the estimate $d^{(1)}_{m,1} =  \mathcal{O}(h^{q^{(0)}})$, we start to considering equation \eqref{eq: z_linear}. Since the IRK method is stiffly accurate, from Remark~\ref{rem: sa}, we have
$
\hat{z}^{(1)}_{m+1, 1} =  \hat{Z}^{(1)}_{ms, 1}.
$
Hence  from \eqref{eq: D_error},
\beq
\label{locallocald}
z_1(\tau_{m+1}) - \hat{z}^{(1)}_{m+1, 1} = d^{(1)}_{m+1} = D^{(1)}_{ms,1}  \stackrel{\eqref{eq: D_error}}{=}
\mathcal{O}(h^{q^{(0)}}), \quad m=0, \cdots M-1.
\eeq

The above proof can be generalized for the InDC method with different IRK methods applied to $k$ correction steps. 
The local error estimates at the interior nodes of the InDC method $\tau_m$
with $m=0, \cdots M$ are
\[
e^{(k)}_{m, 1} =  \mathcal{O}(h^{q^{(0)}+1}), \quad
d^{(k)}_{m, 1} =  \mathcal{O}(h^{q^{(0)}}).
\]
We have thus proved equation \eqref{error_nu} with $\nu = 1$. The general estimates for $\nu > 1$ in equation \eqref{error_nu} can be obtained in a similar fashion to the case of $\nu=1$, as in the Theorem 3.4 in Chap.VI of \cite{hairer1993solving2}. 
\end{proof}

\noindent
{\em Proof of Theorem~\ref{thm: IDC_RK}.} The proof is similar to that for Theorem~\ref{thm: IDC_BE}. 
In fact, we obtain estimates (\ref{final_estimate})  by using the results of Lemmas \ref{lem1_for_main} and \ref{lem2_for_main}. From Lemma \ref{lem1_for_main} we have the local error estimate, for one time step $t_0$ to $t_1$,
\[
e^{(K)}_{M,0} = \mathcal{O}(H^{min(s_K + 1, M+1)}).
\]
From local to global error, we obtain $e^{(K)}_{n,0} = \mathcal{O}(H^{min(s_K, M)})$. From eq.~\eqref{eq: lemma2_g1} and the Lipshitsz condition of $\mathcal{G}$, we get 
\[
d^{(K)}_{n,0} = \mathcal{O}(H^{min(s_K, M)}).
\]
Now in order to complete the proof of the Theorem, we consider the estimates (\ref{error_nu}) in Lemma \ref{lem2_for_main} with $\nu = 1$. 
Then we have for the local error estimates after one step (from $t_0$ to $t_1$)

\[
e^{(K)}_{M,1} = \mathcal{O}(H^{q^{(0)}+1}), \quad d^{(K)}_{M,1} = \mathcal{O}(H^{q^{(0)}}).
\]  
Finally, the global estimate (\ref{final_estimate}) from the local estimate above is a consequence of the Theorem 4.5 and 4.6 in Chap. VII of \cite{hairer1993solving2}.


\begin{rem}\label{notY}
We remark that we can not improve the estimate of the global error for the $y$-component as done in Theorem 3.4 in  \cite{hairer1993solving2} for high-indices.
Indeed  the reason for such loss of accuracy is related to the evaluation of the integrals in equation (\ref{newSnu}). These integrals are obtained from the prediction step, and {the algebraic variable $z$ obtained in the prediction step is the cause }that reduces the order of the differential variable $y$ {in the correction steps}. 
This can be seen in the evaluation from equation (\ref{RKeps2}) to (\ref{RKeps_22}) due to (\ref{eq: Est_index2}). We note that a similar conclusion for the remainder can be drawn. 
\end{rem}



\subsection{Estimation of the Remainder} \label{SectRem}
{
Finally, in order to estimate the remainder  for the global error functions $e^{(K)}_n$ and $d^{(K)}_n$, 
we have the following result. 
\begin{thm}
\label{prop: remainder}
Under the same hypothesis as in Theorem \ref{thm: IDC_RK} for any fixed constant $C>0$ and $\nu \le q^{(0)}+1$, the global error satisfies for $\eps \le CH$ 
\beq
\label{remainder}
e^{(K)}_n = 
e^{(K)}_{n,0} + \eps e^{(K)}_{n,1} + \cdots + \eps^{\nu} e^{(K)}_{n,\nu} 
+\mathcal{O}(\eps^{\nu+1}/H), \quad
d^{(K)}_n = 
d^{(K)}_{0,n} + \eps d^{(K)}_{n,1} + \cdots + \eps^{\nu} d^{(K)}_{n,\nu} 
+\mathcal{O}(\eps^{\nu+1}/H).
\eeq
These estimates 
hold uniformly for $H\leq H_0$ and $nH \le Const$.   
\end{thm}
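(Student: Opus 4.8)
The plan is to mimic the remainder analysis of Chap.~VI.3 of \cite{hairer1993solving2}, adapted to the InDC setting through the reformulation of InDC as a single stiffly accurate IRK method with invertible $A$ (Propositions~\ref{prop: idc_be_sa} and \ref{prop: IDC_RK_B}). First I would introduce the scaled remainders of the \emph{exact} solution by writing, for the truncation index $\nu \le q^{(0)}+1$,
\[
y(t) = \sum_{j=0}^{\nu}\eps^j y_j(t) + \eps^{\nu+1}\eta(t), \qquad z(t) = \sum_{j=0}^{\nu}\eps^j z_j(t) + \eps^{\nu+1}\zeta(t),
\]
and substitute this into \eqref{spp}. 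Collecting powers of $\eps$ and using that the coefficients $y_j,z_j$ solve the DAE hierarchy \eqref{eq: exact_eps_0}--\eqref{eq: exact_eps_nu}, the terms up to order $\eps^\nu$ cancel identically, and after dividing by $\eps^{\nu+1}$ the pair $(\eta,\zeta)$ is seen to satisfy a \emph{linear, forced} singular perturbation problem
\[
\eta' = f_y\eta + f_z\zeta + a(t), \qquad \eps\zeta' = g_y\eta + g_z\zeta + b(t,\eps),
\]
with $f_y,\dots,g_z$ evaluated along the reduced solution and with bounded forcing $a,b$ collecting the truncation defects (the leading one in the algebraic equation being $-z'_\nu$). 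The crucial point is that the logarithmic-norm condition \eqref{eq: gz} is inherited by this remainder problem, since its stiff linearization is again $g_z$.

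Next I would carry out the analogous decomposition for the numerical solution. Using the $\eps$-asymptotic expansion of the InDC-IRK scheme in \eqref{ExpNum}--\eqref{newK3}, I would write $\hat y^{(K)}_n = \sum_{j=0}^\nu \eps^j \hat y^{(K)}_{n,j} + \eps^{\nu+1}\hat\eta_n$ and likewise for $\hat z$, and verify that the remainders $(\hat\eta_n,\hat\zeta_n)$ are exactly the InDC numerical solution of the discretized remainder problem, up to a consistency defect of the same size as the continuous truncation defect. Here the linearity of the relations \eqref{newK2}--\eqref{newK3} in the $\eps$-coefficients is essential: the discrete cancellation of the low-order coefficients mirrors the continuous one and is guaranteed by stiff accuracy through Proposition~\ref{prop: SA}. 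The difference $w_n := \hat\eta_n - \eta(t_n)$ (and its $\zeta$-analogue) then satisfies a one-step recursion $w_{n+1} = \Phi\, w_n + \text{(local defect}_n)$ with $\Phi$ the bounded, $L$-stable amplification of the reformulated IRK method.

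Then I would estimate $w_n$ by the same local-to-global passage used for Theorem~\ref{thm: IDC_RK}, i.e.\ Lemmas~\ref{lem1_for_main}--\ref{lem2_for_main} together with Theorems~4.5 and 4.6 in Chap.~VII of \cite{hairer1993solving2}. The retained error-term estimates \eqref{error_nu} supply the sizes of the local defects entering the recursion; in particular, at the truncation boundary the algebraic local defect carries no extra factor of $h$ (it is $\mathcal{O}(1)$ in $h$, reflecting the order reduction of the $z$-component). Summing such $\mathcal{O}(1)$ contributions over the $n=\mathcal{O}(1/H)$ steps produces the $\mathcal{O}(1/H)$ growth, so that $w_n = \mathcal{O}(1/H)$; multiplying back by $\eps^{\nu+1}$ yields the claimed remainder $\mathcal{O}(\eps^{\nu+1}/H)$, uniformly for $\eps \le CH$, $H\le H_0$ and $nH\le \mathrm{Const}$. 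The restriction $\nu\le q^{(0)}+1$ enters precisely because \eqref{error_nu}, and hence the control of the local defects, is available only up to this index.

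I expect the main obstacle to be the second step combined with the stability of the recursion uniformly in $\eps$: showing that the algebraic component of $w_n$ does not blow up as $\eps\to0$ requires exploiting $R(\infty)=0$ (stiff accuracy, the Remark following Definition~\ref{DefSA}) together with the invertibility of $A$ to eliminate $\hat\zeta$ through a relation of the type \eqref{eq: D_E}, and then controlling the non-decaying forcing $b(t,\eps)$ through the contractivity furnished by \eqref{eq: gz}. This is the same bottleneck flagged in Remarks~\ref{rem:bn} and \ref{notY}: the order reduction of the prediction step is exactly what prevents improving the $1/H$ loss and what caps the admissible truncation at $\nu = q^{(0)}+1$.
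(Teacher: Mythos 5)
Your proposal is correct and follows essentially the same route as the paper's proof: both truncate the $\eps$-expansion of the numerical solution, estimate the defect of the truncated series when inserted into the InDC-IRK scheme (viewed, via Propositions~\ref{prop: idc_be_sa} and \ref{prop: IDC_RK_B}, as a single stiffly accurate IRK method with invertible $A$), and propagate that defect through the $\eps$-uniform stability estimates of Chap.~VI.3 of \cite{hairer1993solving2}, with the $1/H$ loss arising from the local-to-global accumulation exactly as you describe. The only cosmetic difference is that you package the argument through an explicit continuous remainder problem for $(\eta,\zeta)$ and the error $w_n=\hat\eta_n-\eta(t_n)$, whereas the paper bounds the unscaled numerical remainder $\Delta \hat{y}^{(K)}_{n} = \hat{y}^{(K)}_{n}-\hat{\underline{y}}^{(K)}_{n}$ directly through the defect estimate (\ref{estimate!1}) and the recursion (\ref{estimate!2}).
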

\noindent
{\bf Proof.}
By the estimates (\ref{error_nu}) it is sufficient to prove the result for $\nu = q^{(0)} + 1$. 

Through the  $\eps$-asymptotic expansion  (\ref{eq: eps_error})   for the global error functions $e^{(k)}_n$ and $d^{(k)}_n$, by considering estimates (\ref{error_nu}) globally, i.e. 

\beq
\label{error_nu_glob}
e^{(k)}_{n,\nu} = y_{n,\nu} - \hat{y}^{(k)}_{n,\nu} = \mathcal{O}(H^{{q}^{(0)}+1-\nu}), \quad
d^{(k)}_{n,\nu} = z_{n,\nu} - \hat{z}^{(k)}_{n,\nu} = \mathcal{O}(H^{{q}^{(0)}+1-\nu}),
\eeq

 and $\nu = q^{(0)} + 1$, we get: 
\beq\label{ValR}
{e}^{(K)}_n = 
e^{(K)}_{n,0} + \eps e^{(K)}_{n,1} + \cdots + \eps^{\nu} e^{(K)}_{n,\nu} 
+\mathcal{O}(\eps^{\nu+1}/H), 
\quad
{d}^{(K)}_n =  
d^{(K)}_{0,n} + \eps d^{(K)}_{n,1} + \cdots + \eps^{\nu} d^{(K)}_{n,\nu} 
+\mathcal{O}(\eps^{\nu+1}/H).
\eeq
with $e^{(K)}_{n,0} = y_{n,0}^{(K)} - \hat{y}_{n,0}^{(K)} $ and $d^{(K)}_{n,0}=z_{n,0}^{(K)} - \hat{z}_{n,0}^{(K)} $, $\cdots$ (see formula (\ref{eq: eps_error})).

In order to estimate the remainder, we consider the truncated series of the quantities in (\ref{ExpNum}):
\begin{eqnarray}\label{yYZ}
\hat{\underline{{y}}}_n^{(K)} = \hat{{y}}_{n,0}^{(K)} + \varepsilon \hat{{y}}_{n,1}^{(K)} + \cdots + \varepsilon^{\nu}\hat{y}_{n,\nu}^{(K)}, \quad
 \hat{\underline{{z}}}_n^{(K)} = \hat{{z}}_{n,0}^{(K)} + \varepsilon \hat{{z}}_{n,1}^{(K)} + \cdots + \varepsilon^{\nu}\hat{{z}}_{n,\nu}^{(K)},
 \end{eqnarray}  
\begin{eqnarray}\label{Yy}
\hat{\underline{{Y}}}_{ni}^{(K)} = \hat{{Y}}_{ni,0}^{(K)} + \varepsilon \hat{{Y}}_{ni,1}^{(k)} + \cdots + \varepsilon^{\nu}\hat{{Y}}_{ni,\nu}^{(k)}, \quad
%
 \hat{\underline{{Z}}}_{ni}^{(K)} = \hat{{Z}}_{{ni},0}^{(K)} + \varepsilon \hat{{Z}}_{{ni},1}^{(k)} + \cdots + \varepsilon^{\nu}\hat{{Z}}_{{ni},\nu}^{(K)} ,
  \end{eqnarray} 
and
\begin{eqnarray*}
\Delta \hat{{Y}}^{(K)}_{ni} \doteq  \hat{Y}_{ni}^{(K)}-\hat{\underline{Y}}^{(k)}_{ni}, \quad  
\Delta\hat{{Z}}^{(K)}_{ni} \doteq  \hat{Z}_{ni}^{(K)}-\hat{\underline{Z}}^{(K)}_{ni}.
\end{eqnarray*}
\begin{eqnarray}\label{Kk}
\begin{array}{c}
\displaystyle \Delta \hat{\underline{\mathcal{K}}}^{(K-1)}_{ni} = \Delta \hat{\mathcal{K}}^{(K-1)}_{ni,0}+ \varepsilon \Delta \hat{\mathcal{K}}^{(K-1)}_{ni,1} + \cdots + \varepsilon^{\nu}\Delta \hat{\mathcal{K}}^{(K-1)}_{ni,\nu},\\
\displaystyle \Delta \hat{\underline{\mathcal{L}}}^{(K-1)}_{ni} = \Delta \hat{\mathcal{L}}^{(K-1)}_{ni,0}+ \varepsilon \Delta \hat{\mathcal{L}}^{(K-1)}_{ni,1} + \cdots + \varepsilon^{\nu}\Delta \hat{\mathcal{L}}^{(K-1)}_{ni,\nu},
\end{array}
\end{eqnarray}
and we use the notation for the remainder
 \begin{eqnarray}\label{Sol_e}
\Delta \hat{{y}}^{(K)}_{n} \doteq  \hat{y}_{n}^{(K)}-\hat{\underline{y}}^{(K)}_{n}, \quad  
\Delta \hat{{z}}^{(K)}_{n} \doteq \hat{z}_{n}^{(K)}-\hat{\underline{z}}^{(K)}_{n}.
\end{eqnarray}

Then using (\ref{eq: eps_error}) and (\ref{error_nu_glob}), (\ref{ValR}) is then equivalent to 
\begin{eqnarray}\label{EstD}
\Delta \hat{{y}}^{(K)}_{n} = \mathcal{O}(\eps^{\nu+1}/H), \quad \Delta \hat{{z}}^{(K)}_{n} = \mathcal{O}(\eps^{\nu+1}/H).
\end{eqnarray}
 The proof of (\ref{EstD}) is similar to the proof of Theorem 3.8, Chap.~VI in \cite{hairer1993solving2}.  For the sake of brevity, here we point out the main differences and we give some partial results.

We consider the InDC method (\ref{newK})-(\ref{newapproach})-(\ref{newS}).
Inserting (\ref{yYZ})  into (\ref{eq: spect_inte_accu}), it gives
\begin{eqnarray}\label{Sc}
S^{c_{mi},(k-1)}_{\bar{\hat{f}}} - S^{c_{mi},(k-1)}_{\bar{\underline{f}}} = \mathcal{O}(\varepsilon^{\nu+1}),\quad
S^{c_{mi},(k-1)}_{\hat{\hat{g}}} - S^{c_{mi},(k-1)}_{\underline{\hat{g}}} = \mathcal{O}(\varepsilon^{\nu+1}).
\end{eqnarray}
Similarly inserting the quantities (\ref{yYZ}) (\ref{Yy}) and (\ref{Kk}) with $m$ (index of quadrature nodes) instead of $n$ into (\ref{EvF2}) and by (\ref{Index1FG}), 
 (\ref{Index1FG_L}) and (\ref{Sc}), we obtain by Lemma (\ref{lem2_for_main}) and  $\nu \le q^{(0)} + 1$,
\beq\label{estimate!1}
\left(\begin{array}{c}
\underline{\hat{Y}}_{mi}^{(k)}-\underline{\hat{y}}_{m}^{(k)}\\[2mm]
\eps (\underline{\hat{Z}}_{mi}^{(k)}- \underline{\hat{z}}_{m}^{(k)} ) 
\end{array}\right)
 = 
h  \sum_{j=1}^{s} a_{ij}  \left(\begin{array}{c}
\Delta \underline{\mathcal{\hat{K}}}^{(k-1)}_{mi}\\ 
\Delta \underline{\mathcal{\hat{L}}}^{(k-1)}_{mi} 
                          \end{array}\right)
                          + \left(\begin{array}{c}
 \mathcal{O}(h\varepsilon^{\nu+1})\\[2mm]
  \mathcal{O}(\varepsilon^{\nu + 1})
   \end{array}\right).
\eeq
This represents the defect when  (\ref{yYZ}) (\ref{Yy}) and (\ref{Kk}) are inserted into the InDC R-K method (\ref{newK})-(\ref{newapproach}).

From now on the the proof is similar to Theorem 3.8, in \cite{hairer1993solving2}. 
In fact, applying Theorem 3.6 in  \cite{hairer1993solving2} to (\ref{estimate!1}), 
 it yields
 \beq
 \begin{array}{c}\label{EiDi}
 \displaystyle||\Delta{\hat{Y}}^{(k)}_{mi}||\le C(|| \Delta {\hat{y}}^{(k)}_{m} || + \eps ||\Delta{\hat{z}}^{(k)}_{m} ||) + \mathcal{O}(\eps^{\nu+1}),\\[2mm]
\displaystyle ||\Delta{\hat{Z}}^{(k)}_{mi}||\le C(|| \Delta {\hat{y}}^{(k)}_{m} || + \eps/h ||\Delta{\hat{z}}^{(k)}_{m} ||) + \mathcal{O}(\eps^{\nu+1}/h).
  \end{array}
 \eeq
 where here the quantities $\delta_i$ and $\theta_i$ in Theorem 3.6 in  \cite{hairer1993solving2} are given by: $\delta_i = \mathcal{O}(\eps^{\nu+1})$, $\theta_i = \mathcal{O}(\eps^{\nu+1}/h)$.
 
 In a similar fashion as the point b) of the proof in Theorem 3.8 in \cite{hairer1993solving2}, we obtain for the quantities $\Delta \hat{y}^{(k)}_{m}$ and $\Delta \hat{z}^{(k)}_{m}$ in (\ref{Sol_e})  the recursion 
\beq\label{estimate!2}
\left(\begin{array}{c}
|| \Delta \hat{y}_{m+1}^{(k)}|| \\[2mm]
|| \Delta \hat{z}_{m+1}^{(k)}|| 
\end{array}\right)
 = 
   \left(\begin{array}{cc}
	1 + \mathcal{O}(h) & \mathcal{O}(\eps)\\
        \mathcal{O}(1) & \alpha + \mathcal{O}(\eps) 
                          \end{array}\right)
                          \left(\begin{array}{c}
                          || \Delta \hat{y}_{m}^{(k)}|| \\[2mm]
|| \Delta \hat{z}_{m}^{(k)}|| 
\end{array}\right)
 + \left(\begin{array}{c}
 \mathcal{O}(\varepsilon^{\nu+1})\\[2mm]
  \mathcal{O}(\varepsilon^{\nu + 1}/h)
   \end{array}\right),
\eeq
with $\alpha < 1$. The value of $\alpha$ is specified in the Theorem 3.8 in \cite{hairer1993solving2}. 
 
Finally applying Lemma 3.9  in \cite{hairer1993solving2} to the difference inequalities in (\ref{estimate!2}) gives 
 \begin{eqnarray}\label{induct}
\Delta \hat{y}^{(k)}_m =  \mathcal{O}(\eps^{\nu+1}/h), \quad  \Delta \hat{z}^{(k)}_m  =  \mathcal{O}(\eps^{\nu+1}/h).
\end{eqnarray}
for $mh \le Const$. 
By $H = Mh$, then we get (\ref{EstD}), i.e., the statement of the Theorem.  
}

{
\begin{rem}
We note that from  (\ref{remainder}) for Theorem \ref{thm: IDC_BE} and $\nu \le 2$ we get:
\beq
\begin{array}{lll}
e^{(K)}_n  &=& \mathcal{O}(H^{\min\{K+1, M\}}) + \mathcal{O}(\eps H)+ \mathcal{O}(\eps^2) + \mathcal{O}(\eps^3/H),\\
d^{(K)}_n &=& \mathcal{O}(H^{\min\{K+1, M\}}) + \mathcal{O}(\eps H) + \mathcal{O}(\eps^2) + \mathcal{O}(\eps^3/H),
\end{array}
\eeq
 and for Theorem \ref{thm: IDC_RK} with $\nu = q^{(0)}+1$: 
\beq
\begin{array}{lll}
e^{(K)}_n  &=& \mathcal{O}(H^{\min\{s_{K}, M\}}) +\mathcal{O}(\eps H^{q^{(0)}})+\cdots +\mathcal{O}(\eps^{q^{(0)}+1})  + \mathcal{O}(\eps^{q^{(0)}+2}/H), \\
d^{(K)}_n  &=& \mathcal{O}(H^{\min\{s_{K}, M\}}) +\mathcal{O}(\eps H^{q^{(0)}})+\cdots+\mathcal{O}(\eps^{q^{(0)}+1})  + \mathcal{O}(\eps^{q^{(0)}+2}/H). 
\end{array}
\eeq
\end{rem}
}

\bibliography{refer}

\end{document}